\theoremstyle{definition}
\newtheorem{definition}{Definition}[section] 
\theoremstyle{plain}
\newtheorem{lemma}[definition]{Lemma} 
\newtheorem{proposition}[definition]{Proposition}
\newtheorem{theorem}[definition]{Theorem}
\newtheorem{notation}[definition]{Notation}
\newtheorem{Conjecture}[definition]{Conjecture}
 \newtheorem{numerical}[definition]{Numerical test}
\theoremstyle{remark}
\newtheorem{remark}[definition]{Remark}
\newcommand{\Vol}{\mathrm{Vol}}
\newcommand{\C}{\mathbb{C}}
\newcommand{\R}{\mathbb{R}}
\newcommand{\N}{\mathbb{N}}
\tikzstyle{face}=[black]
\tikzstyle{tore}=[cyan,circle,draw,fill=cyan!10]
\tikzstyle{rod}=[black,draw,circle,fill=black!10]
\tikzstyle{two-hole}=[blue,circle,draw,fill=blue!10]
\tikzstyle{front}=[very thick]
\tikzstyle{back}=[loosely dashed,very thick]
\tikzstyle{frontbis}=[very thick,decoration={markings,mark=at position 1 with
\tikzstyle{backbis}=[loosely dashed,very thick, decoration={markings,mark=at position 1 with
\tikzstyle{frontb}=[very thick]
\tikzstyle{backb}=[loosely dashed,very thick]
\tikzstyle{labels}=[midway, fill=white!30, scale=1.5]
\tikzstyle{comb}=[black,line width=0.8mm]
\tikzstyle{link}=[gray,line width=0.3mm]
\tikzstyle{sign+}=[red,draw,circle,fill=red!10]
\tikzstyle{sign-}=[blue,draw,circle,fill=blue!10]
\tikzset{%
	symbol/.style={
		draw=none,
		every to/.append style={
			edge node={node [sloped, allow upside down, auto=false]{$#1$}}
		},
	},
}
\def\-{\scalebox{1.8}[1]{-}}
\definecolor{codegreen}{rgb}{0,0.6,0}
\definecolor{codegray}{rgb}{0.5,0.5,0.5}
\definecolor{codepurple}{rgb}{0.58,0,0.82}
\definecolor{backcolour}{rgb}{0.95,0.95,0.92}
\lstdefinestyle{mystyle}{
	backgroundcolor=\color{backcolour},   
	commentstyle=\color{codegreen},
	keywordstyle=\color{magenta},
	numberstyle=\tiny\color{codegray},
	stringstyle=\color{codepurple},
	basicstyle=\ttfamily\footnotesize,
	breakatwhitespace=false,         
	breaklines=true,                 
	captionpos=b,                    
	keepspaces=true,                 
	numbers=left,                    
	numbersep=5pt,                  
	showspaces=false,                
	showstringspaces=false,
	showtabs=false,                  
	tabsize=2
}
\title[The Chen--Yang volume conjecture for knots in handlebodies]
{The Chen--Yang volume conjecture for knots in handlebodies}
\author{Fathi Ben Aribi and James Gosselet}
\address{
	UCLouvain, IRMP, Chemin du Cyclotron 2 \\
	1348 Louvain-la-Neuve \\
	Belgium}
\email{fathi.benaribi@uclouvain.be}
\address{
	UCLouvain, IRMP, Chemin du Cyclotron 2 \\
	1348 Louvain-la-Neuve \\
	Belgium}
\email{james.gosselet@student.uclouvain.be}
\subjclass[2020]{57K16; 57K32}
\keywords{Turaev--Viro invariants; volume conjectures; hyperbolic volume; triangulations of $3$-manifolds.}
\begin{document}

	\maketitle
	
	\begin{abstract}
		In 2015, Chen and Yang proposed a volume conjecture that stated that certain Turaev--Viro invariants of an hyperbolic $3$-manifold should grow exponentially with a rate equal to the hyperbolic volume.
		
		Since then, this conjecture has been proven or numerically
		tested for several hyperbolic $3$-manifolds, either closed or with
 boundary, the boundary being either a family of tori or a family of higher genus surfaces. The current paper now provides new numerical checks of this volume conjecture for $3$-manifolds with one toroidal boundary component and one geodesic boundary component.
		
		More precisely, we study a family of hyperbolic $3$-manifolds $M_g$ introduced by Frigerio. Each $M_g$ can be seen as the complement of a knot in an handlebody of genus $g$.
		
		We provide an explicit code that computes the Turaev--Viro invariants of these manifolds $M_g$, and we then numerically check the Chen--Yang volume conjecture for the first six members of this family.
		
		Furthermore, we propose an extension of the volume conjecture, where the second coefficient of the asymptotic expansion only depends on the topology of the boundary of the manifold. We numerically check this property for the manifolds $M_2, \ldots, M_7$ and we also observe that the second coefficient grows linearly in the Euler characteristic $\chi(\partial M_g)$.
	\end{abstract}
	
	\tableofcontents

\section{Introduction}\label{sec:intro}

	Quantum topology began in 1984 with the definition of the Jones Polynomial \cite{Jones}. Since then, several new invariants of knots and 3-manifolds were defined and, inspired by quantum field theories, the \emph{volume conjecture} of Kashaev \cite{Kashaevvolume} and its variants rose to be among  the most studied conjectures in quantum topology. What is intriguing about these volume conjectures is that they link quantum invariants of manifolds to the hyperbolic structure of these manifolds. In \cite{chen2018volume}, Chen--Yang proposed a volume conjecture using a family  $\left\{ TV_{r,2} \right\}_{r \in \mathbb{N}_{\geq 3}}$ of Turaev-Viro type invariants for compact 3-manifolds.
	\begin{Conjecture}[Conjecture 1.1, \cite{chen2018volume}, Chen-Yang]\label{conjecture}
		Let $M$ be a compact hyperbolic 3-manifold. Then for $r$ running over all odd integers such that $r \geq 3$,
		$$\lim_{r \to \infty} \frac{2 \pi }{r-2} \log \left( TV_{r,2} (M) \right) = \Vol(M), $$ where $\Vol(M)$ is the hyperbolic volume of $M$.
	\end{Conjecture}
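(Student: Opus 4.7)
Since Conjecture \ref{conjecture} is open in general (and, as the abstract indicates, the present paper aims at numerical verification rather than a full proof for the family $M_g$), my plan is to outline a strategy that is realistic in both directions: what a general proof would require, and what the numerical check for a specific manifold actually demands. The overall philosophy I would follow is the one initiated by Detcherry--Kalfagianni--Yang: relate $TV_{r,2}(M)$ at the root of unity $q = e^{2i\pi/r}$ to a sum of squared moduli of Reshetikhin--Turaev type invariants, then extract its exponential growth rate from a saddle-point analysis of the state sum defined on a suitable (ideal) triangulation of $M$.

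The concrete steps I would carry out are the following. First, fix an ideal/semi-ideal triangulation $\mathcal{T}$ of $M$ compatible with its boundary type (toroidal cusps and/or totally geodesic boundary); for the Frigerio manifolds $M_g$, such triangulations should be available explicitly, together with the hyperbolic volume $\Vol(M_g)$. Second, write $TV_{r,2}(M)$ as the $\mathcal{T}$-based state sum
\[
TV_{r,2}(M) \;=\; \sum_{\text{admissible colorings } c} \prod_{e} |e|_c \prod_{\Delta} \bigl|\Delta\bigr|_c ,
\]
where edges carry colors in $\{0, 1, \dots, r-2\}$ satisfying the Turaev--Viro admissibility condition and each tetrahedron contributes a quantum $6j$-symbol at $q = e^{2i\pi/r}$. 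Third, rewrite each $6j$-symbol in terms of the quantum dilogarithm; at $r \to \infty$ with colors scaled as $c_e = \lfloor r \alpha_e / 2 \rfloor$, standard asymptotics show that the summand behaves like $\exp\bigl(\tfrac{r}{2\pi} \Phi(\alpha)\bigr)$ for a potential $\Phi$ whose saddle points correspond, via the Neumann--Zagier gluing equations for $\mathcal{T}$, to complete hyperbolic structures on $M$. Fourth, invoke the saddle-point method to conclude that the dominant contribution has modulus $\sim \exp\bigl(\tfrac{r-2}{2\pi} \Vol(M)\bigr)$.

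For the numerical verification on the specific manifolds $M_g$, I would replace the fourth step by direct computation: implement the state sum from the second step in code (a finite but very large summation once $\mathcal{T}$ is fixed), evaluate $TV_{r,2}(M_g)$ for a range of odd $r$, plot $\tfrac{2\pi}{r-2}\log TV_{r,2}(M_g)$, and compare with $\Vol(M_g)$ as $r$ grows.

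The main obstacle is two-fold. On the theoretical side, the rigorous control of the saddle-point analysis is notoriously delicate: one must justify that the highest-modulus critical point dominates, handle the boundary terms coming from non-admissible transitions, and relate the saddle to the geometric solution of the gluing equations — which for manifolds with mixed (toroidal plus geodesic) boundary, as here, is not yet fully settled in the literature. On the computational side, the bottleneck is combinatorial explosion: the number of admissible colorings grows like $r^{|E(\mathcal{T})|}$, so even for $g$ small the computation for moderately large $r$ requires careful handling of symmetries of $\mathcal{T}$, high-precision arithmetic for roots of unity, and efficient evaluation of $6j$-symbols, which is where I expect most of the engineering effort of the paper will be concentrated.
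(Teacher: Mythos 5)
Your proposal correctly recognizes that the statement is an open conjecture which the paper does not prove but only tests numerically, and your described verification strategy --- fix Frigerio's explicit ideal triangulation $\mathcal{T}_g$, enumerate the admissible colorings, evaluate the state sum of quantum $6j$-symbols and edge weights at $q=e^{2i\pi/r}$ for a range of odd $r$, and compare $\frac{2\pi}{r-2}\log TV_{r,2}(M_g)$ with $\Vol(M_g)$ computed via the hyperbolic structure --- is essentially the same approach the paper takes, including your anticipation that the main effort lies in taming the combinatorial explosion of colorings (which the paper addresses via its Theorem \ref{thm:allowedstates}) and in high-precision arithmetic (which indeed causes the pattern breaks the paper observes for large $r$). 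The saddle-point outline for a general proof is not pursued in the paper and is correctly flagged by you as unresolved for manifolds with mixed toroidal and geodesic boundary.
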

	This conjecture has since then been proven \cite{Detcherry, Ohtsuki, wong2020asymptotics} or numerically tested \cite{chen2018volume, MaRo} for several hyperbolic $3$-manifolds, mostly for manifolds without boundary, but also manifolds with toroidal boundary and manifolds with totally geodesic boundary of genus $g \geq2$. However, there is currently no test of this conjecture for an hyperbolic $3$-manifold with a boundary which has both a toroidal component and a totally geodesic boundary component of genus $g \geq2$. In this paper, we thus propose to test the Chen--Yang  volume conjecture for a family of $3$-manifolds $\left\{ M_g \right\}_{g \in \mathbb{N}_{\geq 2}}$, each $M_g$ being the exterior of a knot in a handlebody of genus $g$. These manifolds were constructed and studied by Frigerio \cite{Frig}.
	
	We will actually go one step further, as we will test an extension of Conjecture \ref{conjecture} stated as follows:	
		\begin{Conjecture}\label{conj:vol:bc}
		Let $M$ be a compact hyperbolic 3-manifold of volume $\Vol(M)$. Then for $r$ running over all odd integers such that $r \geq 3$,
		$$ TV_{r,2} (M) \underset{r \to \infty}{\sim} \omega \cdot r^{b} \cdot e^{\frac{\Vol(M)}{2 \pi}(r-2)} \cdot \left (
		1 + O\left (\frac{1}{r-2}\right )
		\right ),
		$$
where $\omega, b \in \R$ are independent of $r$,		or equivalently,
		$$ \frac{2 \pi }{r-2} \log \left( TV_{r,2} (M) \right) 
		\underset{r \to \infty}{\sim} 
		 \Vol(M) + b \dfrac{2 \pi \ln(r-2)}{r-2} + c \dfrac{1}{r-2} + O\left (\frac{1}{(r-2)^2}\right ), $$ where $b \in \R$ and  $c \in \C$ are independent of $r$.
	\end{Conjecture}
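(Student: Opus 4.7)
Conjecture \ref{conj:vol:bc} is a conjectural refinement of Conjecture \ref{conjecture}, so the plan is not to prove it in full generality but, as announced in the abstract, to set up a rigorous numerical verification for the specific family $\{M_g\}_{g \geq 2}$ of Frigerio manifolds. The workflow has three main stages: triangulate and encode the manifolds, evaluate the Turaev--Viro state sum, and fit the resulting data to the asymptotic ansatz.

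First, I would fix an explicit ideal triangulation $\mathcal{T}_g$ of each $M_g$ derived from Frigerio's combinatorial description, taking care of the mixed boundary structure: one toroidal cusp and one totally geodesic component of genus depending on $g$. I would then implement the state sum defining $TV_{r,2}(M_g)$ as a weighted sum over admissible colorings of the edges of $\mathcal{T}_g$ by half-integers in $\{0, \tfrac{1}{2}, 1, \ldots, \tfrac{r-2}{2}\}$, with each coloring contributing a product of quantum $6j$-symbols evaluated at $q = e^{2 \pi i / r}$. Because of the totally geodesic boundary, the truncation has to be handled with the appropriate convention fixing colors on truncation faces, while the cusp contributes standard admissibility constraints. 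I would run this code for all odd $r$ up to the largest value allowed by memory and wall time.

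Second, with the table of values $\{TV_{r,2}(M_g)\}$ in hand, the leading part of the conjecture is tested by checking that $\frac{2 \pi}{r-2} \log TV_{r,2}(M_g)$ approaches the hyperbolic volume $\Vol(M_g)$ already computed by Frigerio. The refined expansion would then be examined by a nonlinear least-squares fit against the ansatz
\[
\frac{2\pi}{r-2} \log TV_{r,2}(M_g) \;\approx\; \Vol(M_g) + b \, \frac{2 \pi \ln(r-2)}{r-2} + c \, \frac{1}{r-2},
\]
extracting numerical estimates of $b$ and $c$ for each $g$. The stronger topological claim announced in the abstract, namely that $b$ depends only on $\chi(\partial M_g)$, would then be probed by comparing the fitted $b$ across $g = 2, \ldots, 7$ and checking linearity in $\chi(\partial M_g)$.

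The main obstacle will be computational rather than conceptual. The number of admissible colorings grows polynomially in $r$ but exponentially in the number of tetrahedra of $\mathcal{T}_g$, and the $6j$-symbols at roots of unity have oscillating signs, so high-precision complex arithmetic is needed to control catastrophic cancellation. Pushing $r$ large enough so that the subleading $\frac{1}{r-2}$ term is cleanly separated from higher-order noise, while simultaneously letting $g$ grow so that the linearity in $\chi(\partial M_g)$ becomes visible, is a delicate balance; this tension is precisely what caps the experiment at $g = 7$.
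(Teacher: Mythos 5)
Your plan coincides with what the paper actually does: Conjecture \ref{conj:vol:bc} is not proven but numerically tested for Frigerio's manifolds $M_2,\ldots,M_7$, by implementing the Turaev--Viro state sum over admissible colorings of Frigerio's ideal triangulations $\mathcal{T}_g$ (with the admissibility conditions simplified in Theorem \ref{thm:allowedstates}), fitting $\frac{2\pi}{r-2}\log TV_{r,2}(M_g)$ to the model $a + b\,\frac{2\pi\ln(r-2)}{r-2} + c\,\frac{1}{r-2}$, and then checking that the fitted $b$ is affine in $\chi(\partial M_g)$; even your diagnosis of the computational obstruction (cancellation among terms of disparate magnitude capping the usable range of $r$) matches the paper's explanation of the pattern breaks for $M_2$ and $M_3$. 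The one small inaccuracy is your mention of a convention fixing colors on truncation faces: in the Chen--Yang definition used here, the invariant of a manifold with geodesic boundary is simply the state sum over admissible colorings of the edge classes $\mathcal{T}_g^{1,\sim}$ of the ideal triangulation (normalized only by the regular-vertex term, which is trivial here), so no boundary-face coloring convention enters.
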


Variants of Conjecture \ref{conj:vol:bc} have been previously stated, proven and numerically checked, notably by Chen--Yang \cite[Section 6]{chen2015volume} (for some manifolds with boundary), and by Ohtsuki \cite{Ohtsuki} and Gang-Romo-Yamasaki \cite{GRY} (for certain closed manifolds, via the usual quadratic relation between Reshetikhin--Turaev invariants and Turaev--Viro invariants). In the present paper, we aim to numerically test Conjecture \ref{conj:vol:bc} for Frigerio's manifolds $M_g$.

A yet stronger conjecture surmises that the coefficient $b$ in Conjecture \ref{conj:vol:bc} should only depend on the topology of the boundary $\partial M$. We go further and conjecture that $b$ is linear in $\chi(\partial M)$, as stated as follows:

	\begin{Conjecture}\label{conj:vol:b:aff}
	Let $\mathcal{M}$ be the set of compact hyperbolic 3-manifolds for which  Conjecture \ref{conj:vol:bc} holds.
Then, for all $M \in \mathcal{M}$, the coefficient $b(M)$ in Conjecture \ref{conj:vol:bc} only depends on the topology of $\partial M$, as an affine function of the Euler characteristic $\chi(\partial M)$.
\end{Conjecture}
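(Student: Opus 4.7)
The plan is to provide evidence for Conjecture \ref{conj:vol:b:aff} through numerical experiments on the family $\{M_g\}_{g=2,\ldots,7}$, piggybacking on the computations carried out in this paper to test Conjecture \ref{conj:vol:bc}, and supplemented by a heuristic argument from Chern--Simons theory. Since by definition the coefficient $b(M)$ is only meaningful once Conjecture \ref{conj:vol:bc} is accepted for $M$, the extraction of $b(M_g)$ and the numerical test of Conjecture \ref{conj:vol:bc} can proceed from the same data set.

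First, I would run the explicit code for $TV_{r,2}(M_g)$ with $g\in\{2,\ldots,7\}$ over a sufficiently long range of odd integers $r$. For each $g$, I would extract an approximate value of $b(M_g)$ by fitting the data for $\frac{2\pi}{r-2}\log TV_{r,2}(M_g)$ to the expansion predicted by Conjecture \ref{conj:vol:bc}, using Frigerio's value of $\Vol(M_g)$ as a consistency anchor. A linear regression of the pairs $\bigl(\chi(\partial M_g),b(M_g)\bigr)=\bigl(2-2g,b(M_g)\bigr)$ would then test whether $b(M_g)$ is affine in $\chi(\partial M_g)$, and comparison with the $b$-values reported in \cite{chen2015volume}, \cite{Ohtsuki} and \cite{GRY} for manifolds of matching boundary Euler characteristic would test the conjectured universality of the slope.

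On the heuristic side, the guiding principle is that for a hyperbolic $M$ the large-$r$ asymptotics of $TV_{r,2}(M)$ is governed by a saddle-point expansion around the geometric $\mathrm{PSL}_2(\C)$-character, so that the polynomial prefactor $r^{b}$ arises from Gaussian integration over the moduli space of flat connections. The complex dimension of this moduli space at the geometric point is, by the standard cohomological computation with coefficients in the adjoint representation together with the half-lives-half-dies principle at the boundary, an affine function of $\chi(\partial M)$. This gives a topological reason to expect a universal slope in Conjecture \ref{conj:vol:b:aff}, with a specific predicted value to compare against the numerical regression.

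The hard step is making this saddle-point picture rigorous for manifolds with mixed (cusped and totally geodesic) boundary. The cusp typically produces logarithmic corrections that must be carefully separated from the geodesic-boundary contribution, and contributions from non-geometric flat connections must be controlled, as already observed in the closed case by Ohtsuki and Gang--Romo--Yamasaki. A precise match between the numerically fitted slope and the topologically predicted one for $M_2,\ldots,M_7$, together with consistency across the literature, would constitute the strongest evidence for Conjecture \ref{conj:vol:b:aff} short of a full proof.
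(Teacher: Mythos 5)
This statement is a conjecture, so neither you nor the paper actually proves it; what the paper supplies is the numerical check of Section \ref{sec:num:aff}, and the operative half of your proposal coincides with it. There, the authors fix $a=\Vol(M_g)$ in the model $\Vol(M_g)+b\cdot\frac{2\pi\ln(r-2)}{r-2}+c\cdot\frac{1}{r-2}$, extract $b(M_g)$ for $2\leqslant g\leqslant 7$ from the Turaev--Viro data, and fit an affine function of $g$ (equivalently of $\chi(\partial M_g)=2-2g$, since the toroidal component contributes nothing), obtaining $b\approx 0.9061-1.068\,g$ with $R^2=0.9967$ (Figure \ref{fig:aff:b}) --- exactly the regression you describe. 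Your supplementary Chern--Simons saddle-point heuristic, in which $r^{b}$ comes from Gaussian integration over the moduli space of flat connections and the affine dependence on $\chi(\partial M)$ from the dimension count via half-lives-half-dies, does not appear in the paper; the discussion section instead floats a combinatorial route, via the $r$-asymptotics of the vertex, edge and tetrahedron terms in the state sum and how their counts relate to $\chi(\partial M)$. Both heuristics are reasonable motivation but neither is anywhere near rigorous for mixed cusped/geodesic boundary, as you acknowledge, so this addition does not change the evidentiary status of the claim. One caution on your plan to cross-check slopes against \cite{chen2015volume}, \cite{Ohtsuki} and \cite{GRY}: the normalization of $b$ is not uniform across sources (here $b$ multiplies $2\pi\ln(r-2)/(r-2)$, whereas the coefficient quoted in Remark \ref{rem:conj:vol:more} multiplies $\ln(r-2)/(r-2)$), so any literature comparison must track the factor of $2\pi$.
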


Conjecture \ref{conj:vol:b:aff}, as stated here, might be too strong to be true. Nevertheless, in this paper, it follows from numerical computations that Conjecture \ref{conj:vol:b:aff} appears to hold  for Frigerio's manifolds $M_g$.

After reviewing preliminaries about triangulations, hyperbolic volumes,  colorings of triangulations and Turaev--Viro invariants in Section \ref{sec:prelim}, 	
we discuss the manifolds $M_g$  in Section \ref{sec:frigerio}: in particular we describe their ideal triangulations $\mathcal{T}_g$, constructed by Frigerio. We observe that these ideal triangulations admit an \textit{ordered structure}, which means we can orient all edges  in a coherent way with face gluings and such that no triangle admits a cycle.
	\begin{proposition}[Proposition \ref{prop:ordered}]\label{orderedintro}
		The triangulations $\mathcal{T}_g$ of the manifolds $M_g$ admit an ordered structure.
	\end{proposition}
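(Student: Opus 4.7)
The plan is to produce an explicit orientation of the edges of $\mathcal{T}_g$ that is invariant under all face identifications and that contains no $3$-cycle on any triangular face; this is exactly what an ordered structure requires. Since Section \ref{sec:frigerio} describes $\mathcal{T}_g$ by a concrete list of tetrahedra with explicit face pairings, the argument will be fundamentally combinatorial and will follow the standard recipe for producing an ordered (a.k.a.\ branched) triangulation: label the four vertices of every ideal tetrahedron by $\{0,1,2,3\}$ and orient each edge from its smaller-indexed endpoint to its larger one.

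First I would fix such a local vertex labelling on each tetrahedron of $\mathcal{T}_g$. By construction, this rules out triangular $3$-cycles inside a single tetrahedron: any face receives three edges pointing from $0$ to $1$, $1$ to $2$ and $0$ to $2$ (or the analogous pattern), which form a linear order rather than a cycle. Hence the ordered property will be automatic \emph{provided} the local labellings are compatible across face gluings, i.e.\ every pairing $\varphi\colon F\to F'$ of $\mathcal{T}_g$ sends the $i$-th vertex of $F$ to the $i$-th vertex of $F'$ in the induced orders. Once this compatibility is verified, the local orientations glue to a globally well-defined orientation on the edge classes of $\mathcal{T}_g$ with the required acyclicity on each triangle.

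The concrete step will be to choose the labellings in a way that reflects the uniform structure of Frigerio's family. I would try, for instance, to induce the local orderings from a global linear order on the ideal vertex classes (there is essentially one class associated with the toroidal cusp and one with the genus-$g$ totally geodesic component), or from a natural order along the combinatorial "spine" of the handlebody used in Frigerio's construction. Because $\mathcal{T}_g$ is assembled from a $g$-indexed pattern of tetrahedra with a regular gluing rule, such a choice should extend uniformly in $g$, reducing the verification to a single building block.

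The main obstacle will be precisely the compatibility check: while an arbitrary labelling trivially avoids cycles on one tetrahedron, the many face pairings in $\mathcal{T}_g$ generically force some identification to reverse the vertex order and thereby reintroduce a $3$-cycle after quotienting. The proof will therefore stand or fall on finding a labelling well-adapted to the symmetries of Frigerio's triangulation; once such a labelling is pinned down, checking orderedness reduces to a finite, $g$-uniform inspection of the face pairings listed in Section \ref{sec:frigerio}.
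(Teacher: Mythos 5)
Your overall strategy is exactly the one the paper follows: an ordered structure is produced by choosing a vertex labelling $\{0,1,2,3\}$ on each tetrahedron, orienting edges from smaller to larger index (which automatically excludes $3$-cycles on any single face), and then verifying that every face pairing of $\mathcal{T}_g$ preserves the induced vertex orders. The paper's proof consists precisely of exhibiting such a labelling (displayed for $g=2,3$ in its figures, with details deferred to the second author's thesis), so there is no methodological divergence.

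The gap is that your proposal never actually produces the labelling, and — as you yourself observe — this is the entire content of the argument: a generic labelling fails the compatibility check, so the proof \emph{is} the explicit choice. Moreover, the one fully concrete mechanism you offer, namely inducing the local orders from a global linear order on the ideal vertex classes, cannot work here: every tetrahedron of $\mathcal{T}_g$ has three of its four vertices in the genus-$g$ class $\nu_2$ and only one in the toroidal class $\nu_1$, so the class order can at best single out one vertex (in the paper's ordering the $\nu_1$-vertex is always the top vertex $3$) and gives no way to totally order the remaining three. Your fallback — an order adapted to the cyclic/combinatorial structure of Frigerio's $(2g+2)$-gon double cone — is the right instinct and is essentially what the paper does, but until that labelling is written down and the finitely many (and $g$-uniform) face pairings are checked against it, the proposition is not yet proved.
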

This property will not be used for the rest of the paper, but may be useful in the future for studying other quantum invariants which are only defined on ordered triangulations \cite{aribi2020geometric, kashaev2012tqft}, in the specific cases of these  manifolds $M_g$.

For two fixed integers $r \geqslant 3$ and $s \geqslant 1$, 	
the Turaev-Viro invariant $TV_{r,s} (M,\mathcal{T})$ of a triangulated manifold $(M,\mathcal{T})$ is defined as a sum over a (usually large) set of admissible colorings of the triangulation $\mathcal{T}$.
Hence, in order to compute the Turaev-Viro invariants $TV_{r,s} (M_g,\mathcal{T}_g)$, we first need to describe the associated set of   admissible colorings $\mathcal{A}_r(M_g,\mathcal{T}_g)$. The main theorem of this paper provides a description of $\mathcal{A}_r(M_g,\mathcal{T}_g)$ that is both clearer than the original definition and easier to transform into computer code (in Section \ref{sec:codetvr}). We now phrase it without technical details:
	\begin{theorem}[Theorem \ref{thm:allowedstates}]\label{r}
		An admissible coloring of $\mathcal{T}_g$ has to satisfy a certain set of admissibility conditions, which are equivalent to another explicit and more convenient set of conditions.
	\end{theorem}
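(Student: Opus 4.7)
The plan is to unwind the standard admissibility conditions on the triangulation $\mathcal{T}_g$ described in Section~\ref{sec:frigerio}, and then use the specific combinatorial structure of $\mathcal{T}_g$ to reorganize them into the ``convenient'' form. Recall that in the general definition, a coloring of the edges of $\mathcal{T}_g$ by elements of $\{0,1,\dots,r-2\}$ is $r$-admissible if, for every triangular face $\Delta$ with edge-colors $(a,b,c)$, the triple satisfies the parity condition $a+b+c \in 2\Z$, the triangle inequalities $|a-b| \leq c \leq a+b$, and the upper bound $a+b+c \leq 2(r-2)$. My first step would be to list explicitly, using Frigerio's description of the tetrahedra and face pairings, all triangles of $\mathcal{T}_g$ together with the edge-colors they carry under a generic coloring, so that the full system of per-triangle admissibility conditions becomes a concrete (finite, genus-indexed) list.

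The core of the argument would then be a reduction of this list. The forward direction (standard admissibility implies the convenient conditions) is essentially a projection, since each convenient condition should be, up to rearrangement, a particular per-triangle condition read off a specific face. For the reverse direction I would argue by case analysis on the types of triangles appearing in $\mathcal{T}_g$: using the face-pairing identifications, many parity conditions coincide, and many triangle inequalities follow from others either tautologically or by adding pairs of convenient inequalities. A clean way to organize this is to write down a short table of ``types'' of triangles that appear in $\mathcal{T}_g$ (their number being bounded independently of $g$, up to obvious families indexed by $g$), and to check the equivalence type by type; the ordered structure provided by Proposition~\ref{orderedintro} can be invoked to fix a consistent labeling of the three edges of each triangle and avoid sign ambiguities in the triangle inequalities.

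The main obstacle is the combinatorial bookkeeping: $\mathcal{T}_g$ contains several tetrahedra whose $4$ faces are identified in pairs according to Frigerio's recipe, so that the raw admissibility conditions form a long list with many redundancies that must be tracked carefully and uniformly in $g$. Once the list of triangles is written in a compact parametric form depending on $g$, the equivalence reduces to a finite check, so the difficulty is really in choosing notation that keeps the argument readable rather than in any deep obstruction. I would therefore invest most of the effort upfront in setting up a table of edge-colors per face, and then let the equivalence fall out essentially mechanically, while still verifying carefully that the convenient inequalities exhaust all the non-redundant triangle inequalities and upper bounds.
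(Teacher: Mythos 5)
Your plan matches the paper's proof in approach: the paper writes out the seven families of per-face admissibility conditions (A)--(G) arising from Frigerio's gluings and reduces them step by step to the convenient conditions (1)--(6), exploiting exactly the redundancies you anticipate (for instance, the admissibility of $(b,c_g,c_0)$ and several of the inequalities for the triples $(b,c_i,c_{i+1})$ are implied by the remaining conditions, some by adding pairs of the convenient inequalities). Two small corrections to your sketch: the forward direction is not a pure projection, since the integrality of $a$ and $b$ and the condition that the $c_i$ are either all integers or all half-odd-integers are extracted via a lemma on triples with a repeated entry (namely $(i,i,k)$ is admissible iff $k\in\N$ and $\frac{k}{2}\le i\le \frac{r-2-k}{2}$) combined with an induction along the chain $c_0,\dots,c_g$; and the ordered structure of Proposition \ref{orderedintro} plays no role here, because admissibility of a triple is symmetric under permutation, so there are no sign ambiguities to resolve.
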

	The proof of Theorem \ref{r} is quite lengthy  but can give insights on how to simplify admissibility conditions for other examples of triangulations. The new set of conditions given by Theorem \ref{r} will be used later on in Section \ref{sec:codetvr}. 
	
	In Section \ref{sec:res}, we study the specific cases of
	  $(M_g,\mathcal{T}_g)$ for $2 \leqslant g \leqslant 7$.
	  We numerically compute
	   their  hyperbolic volume (thanks to results of Frigerio \cite{Frig} and Ushijima \cite{ushijima2006volume}) and their logarithmic Turaev--Viro invariants
	   $QV_{r,2}(M_g):=\frac{2\pi}{r-2} \log(TV_{r,2}(M_g,\mathcal{T}_g))$ 
	    for several values of $r$.
	   For increasing values of $r$, we observe a convergence as expected in Conjecture \ref{conjecture} (for $2 \leqslant g \leqslant 7$), and then a surprising pattern break for $g\in \{2,3\}$.
	\begin{numerical}[Sections \ref{sec:num:disc:M2}, \ref{sec:num:disc:M3} and \ref{sec:M4-7}]
		Conjectures \ref{conjecture} and \ref{conj:vol:bc} appear to hold numerically for the manifolds $M_2, \ldots, M_7$, barring  possible numerical errors for $M_2$ and $M_3$.
		
		More precisely, the graph of the function $QV_{r,2}(M_2)$ (resp. $QV_{r,2}(M_3)$) shows a converging behavior up to $r=33$ (resp. $r=31$) and an unexpected increase after $r=33$ (resp. $r=31$).
			The graph of the function $QV_{r,2}(M_g)$ (for $4 \leqslant g \leqslant 7$) shows a converging behavior.
	\end{numerical}
	We offer hypotheses to explain the previous pattern breaks, and we furthermore compute an interpolating function for the data, which not only fits the pre-break values quite well, but also provides a promising candidate for the next term $b$ in the expected asymptotic expansion of $QV_{r,2}(M_g)$ (see Conjecture \ref{conj:vol:bc}).

	In Section \ref{sec:code} we provide our entire code (written in \textit{SageMath}), with annotations for clarity. In particular, we program functions that compute the hyperbolic volumes  and Turaev--Viro invariants for the manifolds $M_g$, and we list several of these values in the tables of Figures \ref{fig:volMg} and \ref{fig:table:qviro2-7}.
	
	Finally, we observe a linear behavior for the second coefficient $b$, as expected in Conjecture \ref{conj:vol:b:aff}.
	
		\begin{numerical}[Section \ref{sec:num:aff}]
		Conjecture \ref{conj:vol:b:aff} appears to hold numerically for the manifolds $M_2, \ldots, M_7$.
	\end{numerical}

The results in this article follow in part from  the Master's thesis \cite{Gos} of the second author.

\section{Materials and methods}\label{sec:matmet}

	All the calculations were performed on SageMath (a free open-source \mbox{mathematics} software system using Python 3), on a computer equipped with a 
	Intel® Core™ i5-8500 CPU @ 3.00GHz × 6 processor.
	
\section{Preliminaries}\label{sec:prelim}
	
	\subsection{Ideal triangulations}

In  this section, we follow some conventions of \cite{chen2018volume} and \cite{ushijima2006volume}.
A \textit{pseudo $3$-manifold} is a topological space $M$ such that each point $p$ of $M$ has an (open) neighborhood $U_p$ that is homeomorphic to a cone over a surface.
	A \textit{triangulation} $\mathcal{T}$ of a pseudo $3$-manifold $M$ consists of a disjoint union $T_1 \sqcup \ldots \sqcup T_N$
	 of finitely many  Euclidean tetrahedra  and of a collection of affine homeomorphisms between pairs of faces in $T_1 \sqcup \ldots \sqcup T_N$ such that the quotient space $\left (T_1 \sqcup \ldots \sqcup T_N\right )/\sim$  is
	homeomorphic to $M$. 
For $i\in \{0,1,2,3\}$, we denote $\mathcal{T}^{i}$ (resp. $\mathcal{T}^{i,\sim}$) the $i$-skeleton of the disjoint union of tetrahedra in $\mathcal{T}$ (resp. the $i$-skeleton of the quotient space homeomorphic to $M$).
We say that an element $\nu$ of the set $\mathcal{T}^{0,\sim}$ of vertices of $M$ is \textit{regular} (resp. \textit{ideal},  \textit{hyperideal}) if its associated neighborhood $U_\nu$ is a cone over a sphere (resp. over a torus, over a surface of genus at least $2$).
Finally, we say that a compact $3$-manifold $N$ with boundary admits an \textit{ideal triangulation} $\mathcal{T}$ if no vertices in $\mathcal{T}^{0,\sim}$ are regular, and $N$ is obtained from the quotient space of $\mathcal{T}$ by removing open neighborhoods $U_\nu$ of all  vertices $\nu$ in $\mathcal{T}^{0,\sim}$.

	\subsection{Hyperbolic volume}\label{sec:prelim:hyp:vol}

	In \cite{ushijima2006volume}, Ushijima gives a volume formula for a generalized hyperbolic tetrahedron 
	with a given angle structure. 
	Let us detail this formula and its components.
Let $T=T(A,B,C,D,E,F)$ be a generalized tetrahedron in the hyperbolic space $\mathbb{H}^3$ whose angle structure is as in Figure \ref{fig:dihconfig} ($A, \ldots, F \in [0,\pi]$ are dihedral angles).

	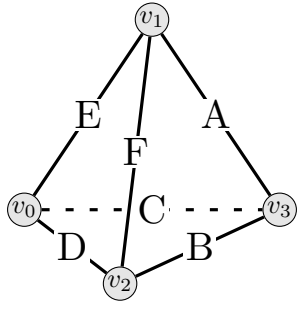
\begin{figure}[!h]	
	\centering
	\begin{tikzpicture}[scale=0.7,inner sep=0.2mm, centered]
	
	\node[rod] (a) at (-2.4,0) {$v_0$};
	\node[rod] (b) at (0,3.6) {$v_1$};
	\node[rod] (c) at (-0.6,-1.4) {$v_2$};
	\node[rod] (d) at (2.4,0) {$v_3$};
	
	\draw[backb] (a) -- (d) node[labels] {C};
	\draw[frontb] (a) -- (b) node[labels] {E};
	\draw[frontb] (a) -- (c) node[labels] {D};
	\draw[frontb] (d) -- (b) node[labels] {A};
	\draw[frontb] (c) -- (d) node[labels] {B};
	\draw[frontb] (c) -- (b) node[labels] {F};
	\end{tikzpicture}
	\caption{The configuration of dihedral angles on $T$.}\label{fig:dihconfig}
\end{figure}

Let $G$ denote the associated Gram matrix: 
$$G=\left(\begin{array}{cccc}
1 & - \cos \, A & - \cos\, B & - \cos\, F \\ 
- \cos\, A & 1 & - \cos\, C & - \cos\, E \\ 
- \cos\, B & - \cos\, C & 1 & - \cos\, D \\ 
- \cos\, F & - \cos\, E & - \cos\, D & 1
\end{array}\right).$$

Let $\mathrm{Li_2}(z)$ be the dilogarithm function defined for $z \in \C \setminus [1,\infty)$ in \cite[Introduction]{ushijima2006volume} by the analytic continuation of the following integral:
$$\mathrm{Li_2}(x) : = - \int_0^x \frac{\log(1-t)}{t} \text{d}t \quad \text{for } x \in \mathbb{R}_{>0}.$$

Let $I$ be the principal square root of $-1$ , $a:= \exp(I\cdot A)$, $b:= \exp(I\cdot B)$, ..., $f:= \exp(I\cdot F)$ and let $U(z,T)$ be the complex valued function defined as follows:
\begin{align*}
U(z,T):= \frac{1}{2}(&\mathrm{Li_2}(z) + \mathrm{Li_2}(abdez)+ \mathrm{Li_2}(acdfz) + \mathrm{Li_2}(bcefz)\\ &- \mathrm{Li_2}(-abcz)-\mathrm{Li_2}(-aefz)-\mathrm{Li_2}(-bdfz)-\mathrm{Li_2}(-cdez)).
\end{align*}

We denote by $z_{+}$ and $z_{-}$ the two complex numbers defined as follows:
\begin{align*}
z_\pm := & -2\frac{\sin \, A\sin\,D+\sin\,B\sin\,E+\sin\,C\sin\,F \pm  \sqrt{\text{det} \, G}}{ad+be+cf+abf+ace+bcd+def+abcdef},
\end{align*}
where $\sqrt{\text{det} \, G}\in I \mathbb{R}_{>0}$ is the principal square root of $\text{det} \, G$.

\begin{proposition}[Ushijima, \cite{ushijima2006volume}  Theorem 1.1]\label{prop:formula:ush}
	The hyperbolic volume $\Vol(T)$ of a generalized tetrahedron $T=T(A,B,C,D,E,F)$ is given as follows:
	$$\Vol(T) = \frac{1}{2}\mathcal{I}(U(z_-,T)-U(z_+,T)),$$ 
	where $\mathcal{I}$ means the imaginary part.
\end{proposition}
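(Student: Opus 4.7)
The plan is to follow the strategy behind the Murakami--Yano formula, which Ushijima adapts and extends to cover the hyperideal cases uniformly. The fundamental input is Schläfli's differential formula for a hyperbolic polytope $P$: if we vary the dihedral angles, then
\[
d\Vol(P) = -\tfrac{1}{2} \sum_{e} \ell_e\, d\theta_e,
\]
where $e$ runs over the edges of $P$, $\ell_e$ is the edge length, and $\theta_e$ is the dihedral angle along $e$. So the claim reduces to showing that the right-hand side in Ushijima's formula has the same differential as $\Vol(T)$ and agrees with it on some reference configuration where the volume is known (e.g.\ a regular ideal tetrahedron or the collapsed configuration).

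First I would express the edge lengths of $T$ in terms of the dihedral angles through the Gram matrix $G$. Standard hyperbolic trigonometry gives $\cosh \ell_e$, for the edge opposite $e$, in terms of the cofactors of $G$; in particular each $\ell_e$ depends analytically on the $\exp(IA),\dots,\exp(IF)$. Next I would compute $\partial U / \partial z$: since $\frac{d}{dz}\mathrm{Li}_2(wz) = -\frac{1}{z}\log(1-wz)$, the eight dilogarithm terms combine into
\[
\frac{\partial U}{\partial z}(z,T) = -\frac{1}{2z}\log\!\left(\frac{(1-z)(1-abdez)(1-acdfz)(1-bcefz)}{(1+abcz)(1+aefz)(1+bdfz)(1+cdez)}\right),
\]
and a direct manipulation (clearing denominators and using $a\bar a = 1$, etc.) shows that the critical-point equation $\partial U/\partial z = 0$ reduces to a quadratic in $z$ whose two roots are exactly $z_{\pm}$ as written, the discriminant producing $\sqrt{\det G}$.

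The heart of the argument is then to differentiate $\Phi(T):=\tfrac{1}{2}\mathcal{I}\bigl(U(z_-,T)-U(z_+,T)\bigr)$ with respect to each dihedral angle $A,\dots,F$, using the fact that since $z_\pm$ are critical points of $U(\cdot,T)$, the implicit derivatives $\partial z_\pm / \partial A$ contribute nothing, and one is left with the explicit $A$-derivative of $U$ at $z=z_\pm$. A patient computation, combined with the identity $\mathrm{Li}_2(x)+\mathrm{Li}_2(1/x) = -\tfrac12\log^2(-x) - \tfrac{\pi^2}{6}$ and the log of the product appearing in $\partial U/\partial z$, should reorganize into $-\tfrac{1}{2}\ell_A$ (in the sense of cofactors of $G$ giving $\cosh\ell_A$), matching Schläfli's formula term by term.

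The main obstacle, and what occupies most of Ushijima's paper, is \emph{not} the differential identity but the control of branches: the dilogarithm is multivalued, and for the generalized hyperbolic tetrahedra where some vertices are hyperideal, the truncation by polar planes changes which branch of each $\mathrm{Li}_2$ must be taken so that the imaginary part still gives the geometric volume. Concretely I would (i)~check the formula on a simple reference family where the volume is computable directly (regular ideal, or a degenerate configuration where $\det G = 0$ and $z_+ = z_-$, which kills $\Phi$), (ii)~argue by connectedness of the space of admissible angle structures together with analyticity to propagate the identity globally, and (iii)~verify that crossing a hyperideal locus does not introduce a jump in $\mathcal{I}(U(z_-)-U(z_+))$ beyond the one matching the geometric truncation. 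Once these branch issues are dealt with, the Schläfli-type comparison together with the base-case computation yields the stated formula.
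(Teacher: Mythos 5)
The paper does not actually prove this proposition: it is imported verbatim as Theorem 1.1 of Ushijima's article, so there is no internal argument to compare yours against. That said, your outline correctly identifies the strategy that Ushijima (following Murakami--Yano) really uses: show that $z_\pm$ are the critical points of $U(\cdot,T)$, differentiate $\Phi(T)=\tfrac12\mathcal{I}\bigl(U(z_-,T)-U(z_+,T)\bigr)$ in the dihedral angles using that the implicit $\partial z_\pm/\partial A$ terms vanish, match the result with the Schl\"afli differential $d\Vol=-\tfrac12\sum_e \ell_e\,d\theta_e$, and anchor the identity at a degenerate configuration (e.g.\ $\det G=0$, where $z_+=z_-$ and $\Phi=0$) before propagating by analyticity. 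Your computation of $\partial U/\partial z$ and the reduction of the critical-point equation to a quadratic with discriminant $\det G$ are also correct.

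As a proof, however, the two decisive steps are asserted rather than carried out. The claim that the explicit $A$-derivative of $U$ at $z=z_\pm$ reorganizes, via dilogarithm identities, into $-\tfrac12\ell_A$ with $\cosh\ell_A$ expressed through cofactors of $G$ is precisely the technical core of the theorem; ``a patient computation \dots should reorganize into'' is where the proof actually lives, and nothing in your text guarantees the signs and branches come out right. Likewise the branch analysis in your step (iii) is described but not performed, and it is exactly there that the hyperideal (truncated) cases differ from the Murakami--Yano setting: the Schl\"afli formula must be applied to the truncated polyhedron, whose triangular truncation faces meet the original faces at right angles (so those edges contribute nothing, a point you do not address), and ideal vertices force infinite edge lengths that require a limiting or renormalized version of the formula. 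So your proposal is a faithful road map of the known proof, but not a proof; for the purposes of this paper the correct move is simply to cite Ushijima, which is what the authors do.
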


In Section \ref{sec:computing:ushijima} we provide a code to compute the previous functions and formula.

	\subsection{Admissible colorings}
	
We will mostly follow the notations, conventions and definitions of \cite[Section 2]{chen2018volume}.
For the remainder of this paper, let us fix a pair $(r,s)\in \mathbb{N}^2$ such that $r\geq3$ and $s\geq1$. 
We will only specify $s=2$ when studying the Chen--Yang volume conjecture.
\begin{notation}
	Let $\frac{\mathbb{N}}{2}=\left\{0,\frac{1}{2},1,\frac{3}{2},...\right\}$ denote the set of non-negative half-integers. 
	Let $\frac{\mathbb{N}_{odd}}{2}=\left\{\frac{1}{2},\frac{3}{2},\frac{5}{2},...\right\}$ denote the set of non-negative half-odd-integers. 
	Let $I_r$ denote the subset $\left\{0,\frac{1}{2},1,...,\frac{r-2}{2}\right\}$ of $\frac{\mathbb{N}}{2}$.

As a convention, we let $\sqrt{-x}=I\sqrt{x}$ for $x\geq0$.  
\end{notation}

\begin{definition}\label{admissibility} A triple $(i,j,k)$ of elements of $I_r$ is called \emph{admissible} if it satisfies the following conditions:
	\begin{enumerate}[(i)]
		\item \begin{enumerate}
			\item $i+j\geq k$,
			\item $j+k\geq i$,
			\item $k+i\geq j$,
		\end{enumerate} 
		\item $i+j+k \in \mathbb{N}$,
		\item $i+j+k \leq r\-2$.
	\end{enumerate}
\end{definition}

\begin{definition}\label{admcolor}Let $M$ be an hyperbolic compact 3-manifold with boundary that admits an ideal triangulation $\mathcal{T}$.
		A\emph{ coloring at level r of $(M,\mathcal{T})$} is an application $c:\mathcal{T}^{1,\sim} \to I_r$.
		The coloring is called \emph{admissible} if for every $T \in \mathcal{T}^{3}$, the triples $\left(c([e_{01}]), c([e_{02}]), c([e_{03}])\right)$, $\left(c([e_{01}]), c([e_{12}]), c([e_{13}])\right)$, $\left(c([e_{02}]), c([e_{12}]), c([e_{23}])\right)$ and $\left(c([e_{03}]), c([e_{13}]), c([e_{23}])\right)$ are admissible, where $[e_{kl}] \in \mathcal{T}^{1,\sim}$ denotes the equivalence class under $\sim$ of the edge $e_{kl}$ of $T$ (for $k,l \in \{0,1,2,3\}$ such that $k\neq l$).
\end{definition}

	\subsection{Turaev--Viro invariants}

Recall that  $r,s\in \mathbb{N}$ are such that $r\geq3$ and $s\geq1$.

	For $n \in \mathbb{N}$, the \emph{quantum number} $[n]$ is the real number defined by $$[n]:=\frac{\sin\left(\frac{ns\pi}{r}\right)}{\sin\left(\frac{s\pi}{r}\right)} \in \mathbb{R}.$$
	For $n \in \mathbb{N}$, the \emph{quantum factorial} $[n]!$ is defined by $$[n]!:=[n][n\-1]...[2][1]\in \mathbb{R},$$ and, as a convention, $[0]!=1$.
	For an admissible triple $(i,j,k) \in (I_r)^3$, we define $$\Delta(i,j,k):=\sqrt{\frac{[i+j\-k]![i\-j+k]![\-i+j+k]!}{[i+j+k+1]!}}.$$

\begin{definition}[Quantum $6j$-symbols]\label{quantumsymbols1} 
	Let $(i,j,k,l,m,n)$ be a 6-tuple of elements of $I_r$ such that $(i,j,k)$, $(j,l,n)$, $(i,m,n)$ and $(k,l,m)$ are admissible.

	Let $T_1=i+j+k$, $T_2=j+l+n$, $T_3=i+m+n$, $T_4=k+l+m$, $Q_1=i+j+l+m$, $Q_2=i+k+l+n$ and $Q_3=j+k+m+n$.

	Then \emph{the quantum $6j$-symbol} for the 6-tuple $(i,j,k,l,m,n)$ is defined by 
	\begin{align*}
	\begin{vmatrix} 
	i & j & k \\ 
	l & m & n
	\end{vmatrix}:= & \sqrt{\-1}^{-2(i+j+k+l+m+n)} \Delta(i,j,k)\Delta(j,l,n)\Delta(i,m,n)\Delta(k,l,m) \\
	\cdot & \sum_{z=\max\{T_1,T_2,T_3,T_4\}}^{\min\{Q_1,Q_2,Q_3\}}\frac{(\-1)^z[z+1]!}{[z\-T_1]![z\-T_2]![z\-T_3]![z\-T_4]![Q_1\-z]![Q_2\-z]![Q_3\-z]!}.
	\end{align*}
\end{definition}

\begin{proposition}[Allowed symbol permutations]\label{prop:allowedperm}Let $(i,j,k,l,m,n)$ be a 6-tuple of elements of $I_r$ such that $(i,j,k)$, $(j,l,n)$, $(i,m,n)$, $(k,l,m)$ are admissible.

	Then, we have the following allowed permutations:
	$$\begin{vmatrix} 
	i & j & k \\ 
	l & m & n
	\end{vmatrix} 
	=
	\begin{vmatrix} 
	j & i & k \\ 
	m & l & n
	\end{vmatrix}
	= 
	\begin{vmatrix} 
	i & k & j \\ 
	l & n & m
	\end{vmatrix}
	= 
	\begin{vmatrix} 
	i & m & n \\ 
	l & j & k
	\end{vmatrix}
	= 
	\begin{vmatrix} 
	l & m & k \\ 
	i & j & n
	\end{vmatrix}
	= 
	\begin{vmatrix} 
	l & j & n \\ 
	i & m & k
	\end{vmatrix}.$$
\end{proposition}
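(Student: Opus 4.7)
The plan is to verify each equality by unpacking Definition \ref{quantumsymbols1} and checking that the three ingredients of the quantum $6j$-symbol are invariant under the stated permutation: (a) the prefactor $\sqrt{-1}^{-2(i+j+k+l+m+n)}$, (b) the product $\Delta(i,j,k)\Delta(j,l,n)\Delta(i,m,n)\Delta(k,l,m)$, and (c) the summation over $z$.

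First, I would note that the prefactor depends only on the sum of all six labels, which is preserved by any rearrangement. Next, observing that $\Delta(a,b,c)$ is symmetric in its three arguments (each of the quantum factorials $[a+b-c]!$, $[a-b+c]!$, $[-a+b+c]!$ and $[a+b+c+1]!$ is symmetric in $a,b,c$), the product of the four $\Delta$-factors depends only on the unordered collection of four unordered triples $\{\{i,j,k\}, \{j,l,n\}, \{i,m,n\}, \{k,l,m\}\}$. Finally, the summation depends only on the multisets $\{T_1,T_2,T_3,T_4\}$ and $\{Q_1,Q_2,Q_3\}$, so it is invariant under any rearrangement preserving these two multisets. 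Since the $T_i$'s are precisely the sums of the four triples and the $Q_j$'s are the three sums of pairs of opposite edges, both multisets are determined by the unordered collection of faces.

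Thus the task reduces to a direct verification, for each of the five non-trivial permutations, that it preserves the unordered collection of four faces $\{\{i,j,k\}, \{j,l,n\}, \{i,m,n\}, \{k,l,m\}\}$. Geometrically, this corresponds to recognizing each permutation as a symmetry of an abstract tetrahedron whose edges are labeled by $i,j,k,l,m,n$, with opposite-edge pairs $\{i,l\}$, $\{j,m\}$, $\{k,n\}$. For instance, the first listed permutation (swap of columns $1$ and $2$) sends the collection of faces to $\{\{j,i,k\}, \{i,m,n\}, \{j,l,n\}, \{k,m,l\}\}$, which is the same unordered family; the other four permutations are handled analogously.

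The main obstacle is purely notational bookkeeping: keeping track of which role each variable plays across five different rearrangements, and verifying each time that the quadruple of faces is preserved. Once one identifies the combinatorial structure (the faces and opposite-edge pairing of a tetrahedron), each of the five cases reduces to a routine verification, and the invariance of the prefactor, $\Delta$-product, and summation then immediately yields the claimed equalities.
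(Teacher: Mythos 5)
Your argument is correct and complete. Note that the paper does not actually prove Proposition \ref{prop:allowedperm}: it is stated without proof, as the standard tetrahedral symmetry of quantum $6j$-symbols inherited from the conventions of Chen--Yang. Your verification supplies exactly the expected argument: the prefactor $\sqrt{-1}^{-2(i+j+k+l+m+n)}$ depends only on the total sum; each $\Delta$ is symmetric in its three arguments, so the $\Delta$-product depends only on the unordered family of the four admissible triples (which also shows admissibility of the hypotheses is preserved); and the $z$-sum depends only on the multisets $\{T_1,\dots,T_4\}$ and $\{Q_1,Q_2,Q_3\}$. One small imprecision: each $Q_j$ is not "a sum of a pair of opposite edges" but rather the total sum minus one opposite-edge pair (e.g.\ $Q_1 = (i+j+k+l+m+n) - (k+n)$); this does not affect your conclusion, since the multiset $\{Q_1,Q_2,Q_3\}$ is still determined by the opposite-edge pairing $\{i,l\},\{j,m\},\{k,n\}$, which each of the five listed permutations preserves along with the four faces. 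The case-by-case check of the five permutations is routine, as you say, and the first case you carried out is representative.
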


Let $M$ be a pseudo-$3$-manifold that admits an  triangulation $\mathcal{T}$.
	Let $R \subset \mathcal{T}^{0,\sim}$ denote the set of regular vertices. We define the \emph{regular vertices term} as $N:=\left(\sum_{i\in I_r}w_i^2\right)^{-|R|}$.
	Let $c: \mathcal{T}^{1,\sim} \to I_r$ be an admissible coloring at level $r$ of $(M,\mathcal{T})$. For $\eta \in  \mathcal{T}^{1,\sim}$, we define the \emph{edge term} $|\eta|_c:=w_{c(\eta)}$, where $w_i:=(\-1)^{2i}[2i+1]$ for $i \in I_r$.
	For $T \in \mathcal{T}^{3}$, we define the \emph{tetrahedron term} as $$|T|_c=\begin{vmatrix} 
	c([e_{01}]) & c([e_{02}]) & c([e_{12}]) \\ 
	c([e_{23}]) & c([e_{13}]) & c([e_{03}])
	\end{vmatrix},$$
	where $[e_{kl}] \in \mathcal{T}^{1,\sim}$ denotes the equivalence class under $\sim$ of the edge $e_{kl}$ of $T$ (for $k,l \in \{0,1,2,3\}$ such that $k\neq l$).

\begin{definition}[Turaev--Viro invariant]\label{TuraevVirosum}
	Let $m=\left|\mathcal{T}^{1,\sim}\right|$.
	Let 	$\mathcal{A}_r(M,\mathcal{T}):=$
$$\left \{(c(\eta_0), ..., c(\eta_m)) \ | \ c :\mathcal{T}^{1,\sim} \to I_r\text{ is an admissible coloring at level } r \text{  of }(M,\mathcal{T})\right \}.$$ 
	We define the \emph{Turaev--Viro invariant} of $M$ as
	$$TV_{r,s}(M,\mathcal{T}):= N \sum_{(c(\eta_0), ..., c(\eta_m)) \in \mathcal{A}_r(M,\mathcal{T})} \prod_{i=0}^m |\eta_i|_c  \prod_{T \in X^3} |T|_c. $$
\end{definition}

In \cite[Theorem 2.6]{chen2018volume}, Chen and Yang prove that the previously defined Turaev--Viro invariant of $M$ does not depend on the triangulation $\mathcal{T}$.

	\subsection{Volume conjecture}

We can now state the Chen--Yang volume conjecture:

	\begin{Conjecture}[\cite{chen2018volume}, Conjecture 1.1]\label{conj:vol}
		Let $M$ be a compact hyperbolic 3-manifold. Then for $r$ running over all odd integers such that $r \geq 3$,
		$$\lim_{r \to \infty} QV_{r,2}(M) = 
		\lim_{r \to \infty} \frac{2 \pi }{r-2} \log \left( TV_{r,2} (M) \right) = \Vol(M), $$ where $ QV_{r,2}(M) := \frac{2 \pi }{r-2} \log \left( TV_{r,2} (M) \right)$ and
		 $\Vol(M)$ is the hyperbolic volume of $M$.
	\end{Conjecture}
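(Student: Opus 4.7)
The plan is to combine an explicit state-sum presentation of $TV_{r,2}(M)$ with a multi-dimensional saddle-point analysis, in the spirit of the partial results of Detcherry, Ohtsuki and Wong--Yang cited in the introduction. I would first fix a (possibly ideal) triangulation $\mathcal{T}$ of $M$, so that $TV_{r,2}(M) = TV_{r,2}(M,\mathcal{T})$ is given by the state sum of Definition \ref{TuraevVirosum}; triangulation-independence is guaranteed by \cite[Theorem 2.6]{chen2018volume}. For the closed case I would work with a triangulation obtained by drilling a short geodesic (or an efficient triangulation by positively oriented truncated hyperbolic tetrahedra whose total volume is $\Vol(M)$), so that in both the cusped and closed settings one may work with a single combinatorial model.

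The core technical step is to rewrite this sum, after rescaling colours by $\theta_\eta := 2\pi c(\eta)/r$, as
\begin{equation*}
TV_{r,2}(M,\mathcal{T}) = \sum_{\theta \in P_r} \exp\!\left( \tfrac{r}{2\pi}\, \Phi(\theta) + O(\log r) \right),
\end{equation*}
where $P_r$ is the discretisation at level $r$ of the ``admissibility polytope'' carved out by Definition \ref{admissibility}, and $\Phi$ is a piecewise real-analytic potential function assembled from one local contribution $\Phi_T$ per tetrahedron, using the explicit formula of Definition \ref{quantumsymbols1} together with the standard asymptotics of $[n]!$ through Faddeev's quantum dilogarithm. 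A key sub-step is to identify $\Phi_T$, up to a real-analytic gauge, with the complex-valued function $U(z,T)$ of Section \ref{sec:prelim:hyp:vol}, so that Proposition \ref{prop:formula:ush} allows one to recognise $\mathcal{I}(\Phi)$ tetrahedron-by-tetrahedron as a sum of generalised hyperbolic volumes.

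The remaining steps mimic those available in the known cases. I would show that the critical points of $\Phi$ in the interior of the admissibility polytope are exactly the solutions of Thurston's gluing equations for $\mathcal{T}$, with the complete hyperbolic structure on $M$ producing a distinguished critical point at which $\mathcal{I}(\Phi)$ equals $\Vol(M)$. A multidimensional steepest-descent contour deformation would then concentrate the state sum into a Gaussian integral around this geometric critical point, yielding $\log TV_{r,2}(M,\mathcal{T}) = \tfrac{r-2}{2\pi}\Vol(M) + O(\log r)$, which is Conjecture \ref{conj:vol}.

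The main obstacle is the global control of the saddle-point method for an arbitrary compact hyperbolic $M$. One must justify the contour deformation across a high-codimension boundary of $P_r$, show that no competing critical point (geometric for a different branch of $\log$, or lying on a face of $P_r$) produces a larger imaginary part, and handle flat or negatively oriented tetrahedra that may appear in an arbitrary $\mathcal{T}$. This is precisely the step that has so far only been carried out for specific families such as fundamental shadow links, certain Dehn fillings, and (numerically) Frigerio's $M_g$; a proof in the generality stated would plausibly require either a Morse-theoretic uniqueness statement for the geometric critical point of $\mathcal{I}(\Phi)$ on $P_r$, or an a priori triangulation-independent upper bound on $|TV_{r,2}(M)|$ matching the simplicial volume $\tfrac{1}{v_3}\Vol(M)$, neither of which is currently available.
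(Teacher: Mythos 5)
The statement you were asked to prove is a conjecture, not a theorem: the paper does not prove Conjecture \ref{conj:vol}, it only restates Chen--Yang's Conjecture 1.1 and then tests it \emph{numerically} for the six manifolds $M_2,\ldots,M_7$, by computing the state sums $TV_{r,2}(M_g,\mathcal{T}_g)$ for small odd $r$ (via the admissibility description of Theorem \ref{thm:allowedstates}) and comparing $\frac{2\pi}{r-2}\log TV_{r,2}$ with the hyperbolic volume obtained from Ushijima's formula. There is therefore no proof in the paper to compare yours against, and the conjecture remains open in the generality stated.

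Your proposal is a reasonable summary of the strategy underlying the known partial results (Detcherry--Kalfagianni--Yang, Ohtsuki, Wong), but it is not a proof, and your own final paragraph identifies why. Concretely: (i) the rewriting of the state sum as $\sum_{\theta}\exp(\frac{r}{2\pi}\Phi(\theta)+O(\log r))$ requires uniform quantum-dilogarithm asymptotics of $[n]!$ and of the $6j$-symbols over the \emph{entire} admissibility polytope, including its boundary faces, which is only established on restricted ranges of colours; (ii) the identification of the local potential $\Phi_T$ with Ushijima's $U(z,T)$ ``up to a real-analytic gauge'' is itself a nontrivial asymptotic statement (a growth-of-$6j$-symbols theorem), not a formal substitution; and (iii) the decisive step --- deforming a lattice sum over a high-dimensional polytope onto a steepest-descent contour through the geometric critical point, while excluding competing critical points, boundary contributions, and degenerate or negatively oriented tetrahedra of an arbitrary triangulation --- is precisely what is missing in the literature and what you acknowledge you cannot supply. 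Absent a uniqueness or dominance statement for the geometric critical point, or an a priori upper bound on $|TV_{r,2}(M)|$ of the correct exponential order, the argument yields at best a heuristic lower bound, not the claimed limit. So the proposal should be read as a research programme, consistent with how the paper treats the statement (as a conjecture supported by numerics), rather than as a proof.
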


	\begin{remark}\label{rem:conj:vol:more}
In \cite[Section 6.1]{chen2015volume}, Chen and Yang discuss the potential asymptotic behavior of $QV_{r,2}(M)$ in more detail than in Conjecture \ref{conj:vol}. In particular, observations for specific $M$ with one boundary component lead them to ask if
	$$QV_{r,2}(M) = \Vol(M) + b \dfrac{\ln(r-2)}{r-2} + c\dfrac{1}{r-2} + O\left (\dfrac{1}{(r-2)^2}\right ),$$
	where the numbers $b,c$ would depend only on $M$. In the examples they study, they find $b$ to be close to $\pi$ when $M$ has one toroidal boundary component and $-3 \pi$ when $M$ has one geodesic boundary component of genus $2$. We rephrased this general question as Conjecture \ref{conj:vol:bc}.
	\end{remark}

We will study in Sections \ref{sec:num:disc:M2} and \ref{sec:code:asymp} how likely these asymptotic behaviors seem for the manifolds $M_2, M_3, \ldots, M_7$ (which have two boundary components each, of different genera).

\section{On Frigerio's manifolds $M_g$}\label{sec:frigerio}

\subsection{Frigerio's construction}

This section follows closely the construction in \cite{Frig}.
Let $g \in \mathbb{N}_{\geq 2}$.
Let $S^3$ be the one point compactification of $\mathbb{R}^3$.

\begin{figure}[!h]
	\centering
	\includegraphics[scale=0.17]{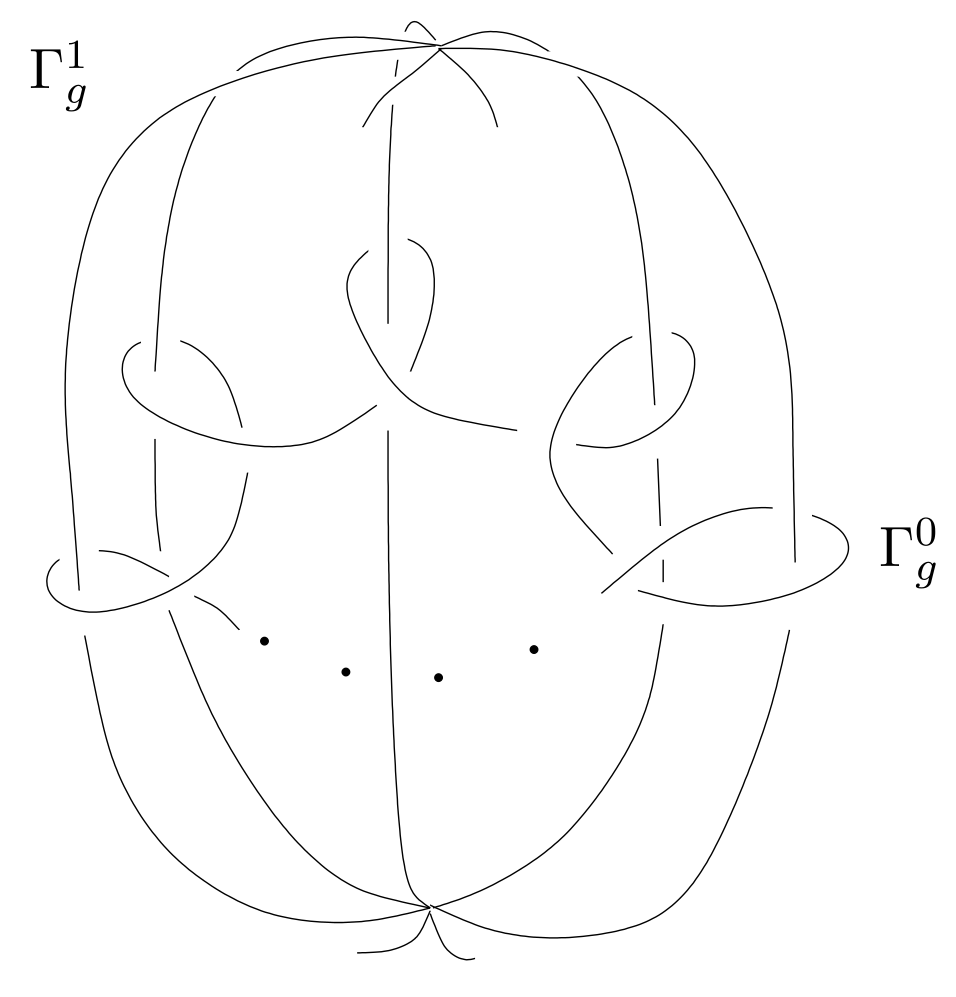}
	\caption{$\Gamma_g$ has two components: $\Gamma^0_g$ is a knot and $\Gamma^1_g$ is a graph with $g+1$ edges and two vertices. Source: \cite[Figure 1]{Frig}}\label{graphg}
\end{figure}

\begin{figure}[!h]
	\centering
	\includegraphics[scale=0.15]{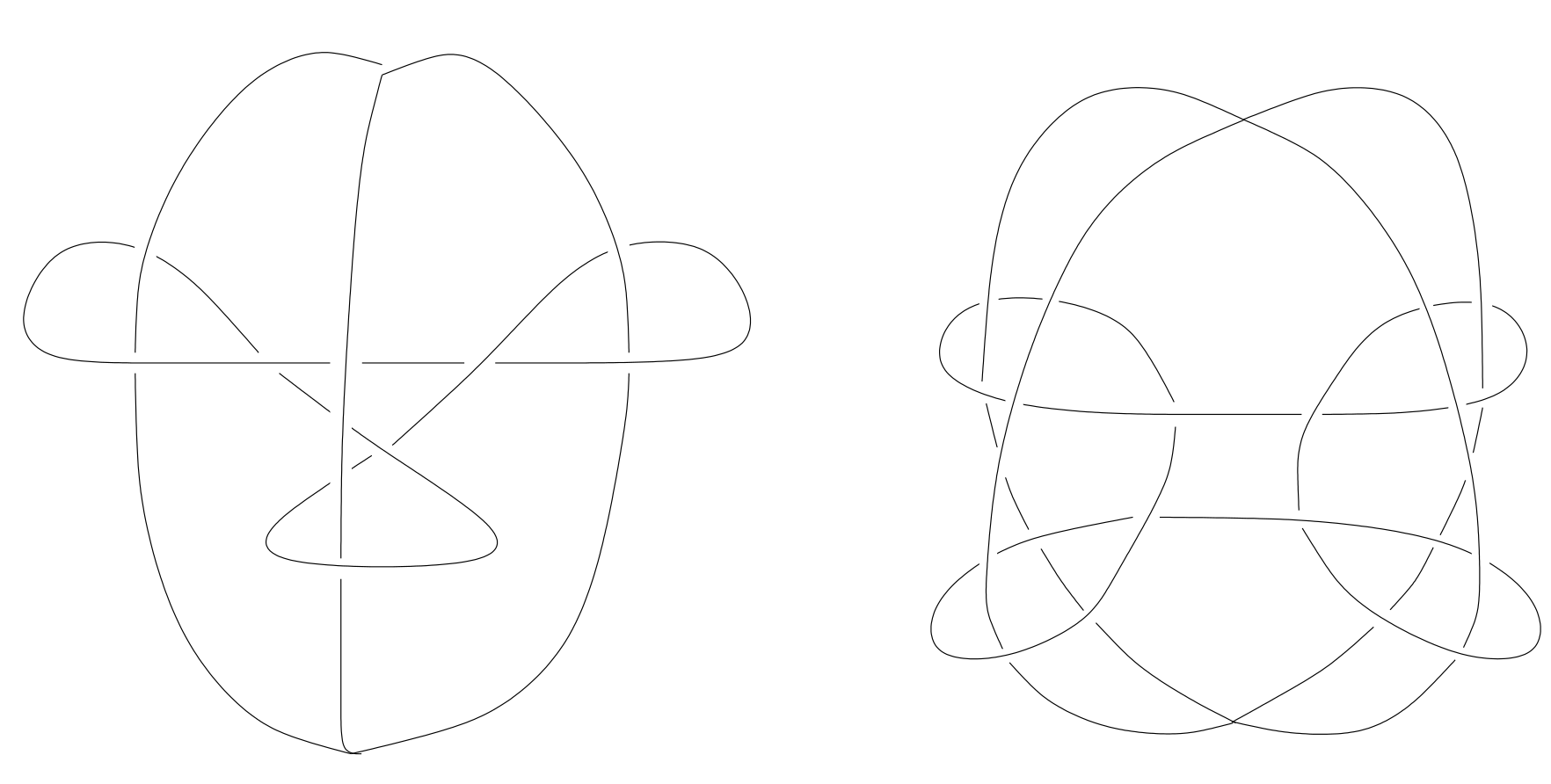}
	\caption{The graphs $\Gamma_2$ (on the left) and $\Gamma_3$ (on the right). Source: \cite[Figure 2]{Frig}}\label{graph23}
\end{figure}

	Let $\Gamma_g \subset S^3$ be the graph shown in Figure \ref{graphg} (see Figure \ref{graph23} for the cases $g=2$ and $g=3$). Let us denote by $\Gamma^0_g$ and $\Gamma^1_g$ the two connected components of $\Gamma_g$, where  $\Gamma^0_g$ is a knot and $\Gamma^1_g$ has two vertices and $g+1$ edges. Let $U(\Gamma^0_g)$ and $U(\Gamma^1_g)$  denote open regular neighbourhoods of $\Gamma^0_g$ and $\Gamma^1_g$. 
		Then $M_g$ is defined as the compact 3-manifold $M_g := S^3\setminus\left\{U(\Gamma^0_g),U(\Gamma^1_g)\right\}$ with boundary $\partial M = (\partial M)_0 \cup (\partial M)_1$ such that $(\partial M)_0=\partial U(\Gamma^0_g) \cong S^1 \times S^1$ and  $ (\partial M)_1=\partial U(\Gamma^1_g) \cong \Sigma_g$ a genus $g$ surface.

\begin{remark}
	The 3-manifold $M_g$ is the exterior of a knot in the handlebody of genus $g$ as seen in Figure \ref{handle}. 
\end{remark}

\begin{figure}[!h]
	\centering
	\includegraphics[scale=0.15]{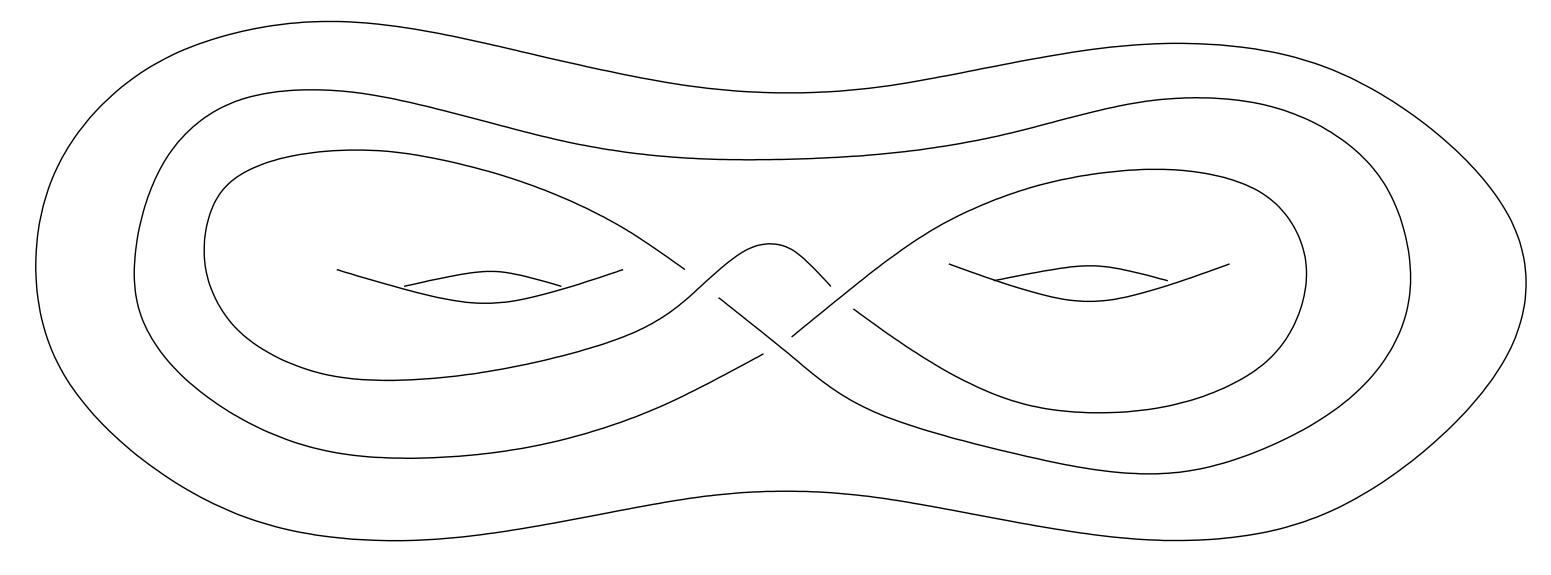}
	\caption{The 3-manifold $M_2$ as a knot complement in the handlebody of genus 2. Source: \cite[Figure 3]{Frig}}\label{handle}
\end{figure}

In \cite[Section 2]{Frig}, Frigerio constructs an ideal triangulation of $M_g$. This construction is illustrated for the cases $g=2$ and $g=3$ in Figure \ref{P3}.
Let $P_{g}$ be the double cone with apices $ap_1$ and $ap_2$ and based on the regular ($2g+2$)-gon whose vertices are $p_0$, $p_1$, ..., $p_{2g+1}$. Let $\mathring{P}_{g}$ be $P_{g}$ with its vertices removed. 
Let $Y_{g}$ be the topological space obtained by gluing the faces of $\mathring{P}_{g}$ according to the following rules:
\begin{itemize}
	\item For any $i=0,2,...,2g$ , the face $[ap_1,p_i,p_{i+1}]$ is identified with the face $[p_{i+1},p_{i+2},v_2]$ (with $v_1$ identified with $p_{i+1}$, $p_{i}$ identified with $p_{i+2}$ and $p_{i+1}$ identified with $ap_2$),
	\item For any $i=1,3,...,2g+1$ , the face $[ap_1,p_i,p_{i+1}]$ is identified with the face $[p_{i+2},v_2,p_{i+1}]$ (with $ap_1$ identified with $p_{i+2}$, $p_{i}$ identified with $ap_2$ and $p_{i+1}$ identified with $p_{i+1}$).
\end{itemize}

\begin{proposition}[Proposition 2.1, \cite{Frig}]\label{XM}For any $g\geq2$, $Y_{g}$ is homeomorphic to the interior of $M_{g}$.
\end{proposition}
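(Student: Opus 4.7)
The plan is to follow a standard ``double cone / ideal triangulation'' argument: build a pseudo-manifold $\overline{Y}_g$ by reattaching the vertices of $\mathring{P}_g$, verify that $\overline{Y}_g$ is homeomorphic to $S^3$ with the graph $\Gamma_g$ sitting as its singular locus, and then check that removing the singular vertices of $\overline{Y}_g$ recovers the interior of $M_g = S^3 \setminus \{U(\Gamma_g^0), U(\Gamma_g^1)\}$. Throughout, the double cone $P_g$ over a $(2g+2)$-gon serves as a combinatorial model of a thickened neighborhood of $\Gamma_g$ in $S^3$, with $ap_1, ap_2$ corresponding to the knot component $\Gamma_g^0$ and the equatorial vertices $p_0, \ldots, p_{2g+1}$ corresponding to $\Gamma_g^1$.

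First I would check that the prescribed face identifications are well defined: each face of the form $[ap_1, p_i, p_{i+1}]$ or $[p_{i+1}, p_{i+2}, ap_2]$ appears in exactly one gluing pair, so that after quotienting every $2$-face is shared by exactly two tetrahedra and $\overline{Y}_g$ is a pseudo $3$-manifold. Second, I would track the orbits of vertices and edges under the identifications: a direct inspection of the two gluing rules (one for even $i$, one for odd $i$) shows that $ap_1$ and $ap_2$ are collapsed to a single class $v_0$ whose link will turn out to be a torus, while the equatorial vertices $p_0, \ldots, p_{2g+1}$ collapse into two classes $v_1, v_2$ (according to parity of the index), whose links should together form a genus $g$ surface.

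Next, and this is the crucial verification, I would compute the links of the three resulting vertex classes. For the apex class, one lists the corner triangles of each tetrahedron incident to $ap_1$ or $ap_2$, follows the gluings around this class, and checks that the resulting $2$-complex is homeomorphic to $S^1 \times S^1$; an Euler characteristic count ($V - E + F = 0$) combined with an orientability check is enough. For the equatorial classes, the same bookkeeping should produce a closed connected surface of Euler characteristic $2 - 2g$, i.e.\ $\Sigma_g$. This step is the main obstacle: the identifications are given abstractly and one really has to carefully enumerate corner triangles and their cyclic orderings around each vertex to be sure the links are as claimed. Once the link computation is in hand, $\overline{Y}_g$ is a genuine $3$-manifold away from $\{v_0, v_1, v_2\}$, so $Y_g = \overline{Y}_g \setminus \{v_0, v_1, v_2\}$ is an open $3$-manifold whose ends are a torus and a genus $g$ surface, matching the boundary of $M_g$.

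Finally, to identify this open $3$-manifold with the interior of $M_g$, I would construct an explicit piecewise-linear embedding of $P_g$ into $S^3$ such that the two apices map onto the knot $\Gamma_g^0$, the equatorial vertices map onto the two vertices of $\Gamma_g^1$, the equatorial edges map onto the $g+1$ edges of $\Gamma_g^1$, and the interior of $P_g$ covers $S^3 \setminus \Gamma_g$ once the indicated faces are glued. This realization is suggested by the symmetric drawings of $\Gamma_g$ in Figures \ref{graphg}--\ref{handle}: one can arrange $P_g$ as a ``fundamental polyhedron'' whose faces pair up precisely via the two rules stated above. Granting this embedding, the quotient of $\mathring{P}_g$ by the gluings is homeomorphic to $S^3 \setminus \Gamma_g$, and removing open regular neighborhoods of $\Gamma_g^0$ and $\Gamma_g^1$ from $S^3$ corresponds exactly to removing the vertex classes $\{v_0\}$ and $\{v_1, v_2\}$ from $\overline{Y}_g$, proving that $Y_g$ is homeomorphic to $\operatorname{int}(M_g)$.
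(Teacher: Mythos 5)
Your overall strategy is the right one and is essentially the one followed in \cite{Frig} (the paper under review does not reprove this statement; it only cites Frigerio, whose argument realizes $P_g$ as a fundamental polyhedron for $S^3\setminus\Gamma_g$, checks the face pairings, and identifies the links of the ideal vertices with the two boundary components of $M_g$). Your first step is fine: each upper face $[ap_1,p_i,p_{i+1}]$ is paired with exactly one lower face, namely the one with underlying triangle $[ap_2,p_{i+1},p_{i+2}]$, and as $i$ runs over $0,\ldots,2g+1$ this is a bijection between upper and lower faces, so the quotient is a pseudo $3$-manifold.

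The genuine gap is in your vertex bookkeeping, and it propagates through exactly the two steps you single out as crucial. Tracing the stated identifications, the even-$i$ rule gives $ap_1\sim p_{i+1}$, $p_{i+1}\sim ap_2$ and $p_i\sim p_{i+2}$, so the quotient has exactly \emph{two} vertex classes, not three: one class $\{ap_1,ap_2,p_1,p_3,\ldots,p_{2g+1}\}$ and one class $\{p_0,p_2,\ldots,p_{2g}\}$. (This is visible in Figures \ref{P3} and \ref{fig:T2}: each tetrahedron $[ap_1,ap_2,p_i,p_{i+1}]$ has three vertices in the genus-$g$ class $\nu_2$ and one in the torus class $\nu_1$.) In particular the apices do \emph{not} form their own class with torus link; they lie in the class whose link is $\Sigma_g$, corresponding to the graph component $\Gamma^1_g$, while the knot $\Gamma^0_g$ corresponds to the class of even-indexed equatorial vertices alone. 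Concretely, the link of $\{p_{\mathrm{even}}\}$ is assembled from one corner triangle per tetrahedron, i.e.\ $2g+2$ triangles with $g+1$ link vertices, giving $\chi=0$ (a torus), whereas the link of the other class is assembled from $3(2g+2)$ triangles and has $\chi=2-2g$. Your proposed computations (torus link for the apex class, genus-$g$ link split between two equatorial classes, which in any case could not yield a single connected $\Sigma_g$) are therefore set up for the wrong orbit structure, and your final embedding of $P_g$ into $S^3$ — sending the apices onto the knot and the equatorial vertices onto the vertices of $\Gamma^1_g$ — is incompatible with the gluing rules, so it cannot induce the desired homeomorphism. The argument is repaired by swapping the roles: the even equatorial vertices collapse to the ideal vertex of the knot, and the apices together with the odd equatorial vertices collapse to the ideal vertex of the graph; with that correction the link and Euler characteristic checks you describe do go through.
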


We can then subdivide $P_{g}$ into $2g+2$ tetrahedra by adding the vertical edge between the two apices $ap_1$ and $ap_2$ (in red in Figure \ref{P3}). 
Such tetrahedra give an ideal triangulation of $M_g$.

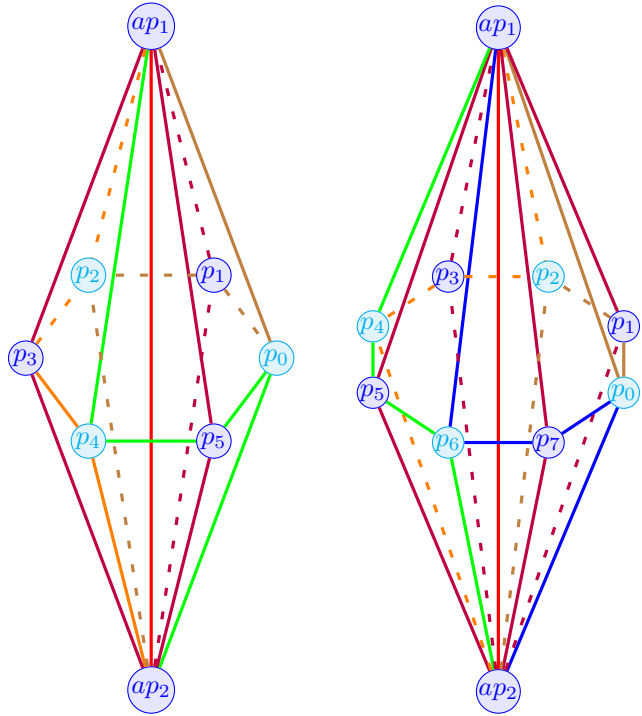
\begin{figure}[!h]
	\centering
	\begin{tikzpicture}[scale=0.55,inner sep=0.2mm, centered]
	
	\node[two-hole] (V1) at (0,8) {$ap_1$};
	\node[two-hole] (V2) at (0,-8) {$ap_2$};
	\node[tore] (P0) at (3,0) {$p_0$};
	\node[two-hole] (P1) at (1.5,2) {$p_1$};
	\node[tore] (P2) at (-1.5,2) {$p_2$};
	\node[two-hole] (P3) at (-3,0) {$p_3$};
	\node[tore] (P4) at (-1.5,-2) {$p_4$};
	\node[two-hole] (P5) at (1.5,-2) {$p_5$};
	
	\draw[red,very thick] (V2) -- (V1);
	\draw[front,brown] (V1) -- (P0);
	\draw[front,purple] (P5) -- (V1);
	\draw[front,green] (V1) -- (P4);
	\draw[front,purple] (P3) -- (V1);
	\draw[back,orange] (V1) -- (P2);
	\draw[back,purple] (P1) -- (V1);

	\draw[front,green] (V2) -- (P0);
	\draw[front,purple] (V2) -- (P5);
	\draw[front,orange] (V2) -- (P4);
	\draw[front,purple] (V2) -- (P3);
	\draw[back,brown] (V2) -- (P2);
	\draw[back,purple] (V2) -- (P1);
	
	\draw[front,green] (P5) -- (P0);
	\draw[front,orange] (P3) -- (P4);
	\draw[front,green] (P5) -- (P4);
	
	\draw[back,orange] (P3) -- (P2);
	\draw[back,brown] (P1) -- (P2);
	\draw[back,brown] (P1) -- (P0);
	
	\end{tikzpicture}
	\qquad
	\begin{tikzpicture}[scale=0.55,inner sep=0.05mm, centered]

	\node[two-hole] (V1) at (0,8) {$ap_1$};
	\node[two-hole] (V2) at (0,-8) {$ap_2$};
	\node[tore] (P0) at (3,-0.8) {$p_0$};
	\node[two-hole] (P1) at (3,0.8) {$p_1$};
	\node[tore] (P2) at (1.2,2) {$p_2$};
	\node[two-hole] (P3) at (-1.2,2) {$p_3$};
	\node[tore] (P4) at (-3,0.8) {$p_4$};
	\node[two-hole] (P5) at (-3,-0.8) {$p_5$};
	\node[tore] (P6) at (-1.2,-2) {$p_6$};
	\node[two-hole] (P7) at (1.2,-2) {$p_7$};

	\draw[very thick,red] (V2) -- (V1);
	\draw[front,brown] (V1) -- (P0);
	\draw[front,purple] (P5) -- (V1);
	\draw[front,green] (V1) -- (P4);
	\draw[back,purple] (P3) -- (V1);
	\draw[back,orange] (V1) -- (P2);
	\draw[front,purple] (P1) -- (V1);
	\draw[front,blue] (V1) -- (P6);
	\draw[front,purple] (P7) -- (V1);

	\draw[front,blue] (V2) -- (P0);
	\draw[front,purple] (V2) -- (P7);
	\draw[front,green] (V2) -- (P6);
	\draw[front,purple] (V2) -- (P5);
	\draw[back,orange] (V2) -- (P4);
	\draw[back,purple] (V2) -- (P3);
	\draw[back,brown] (V2) -- (P2);
	\draw[back,purple] (V2) -- (P1);
	
	\draw[front,blue] (P7) -- (P0);
	\draw[back,orange] (P3) -- (P4);
	\draw[front,green] (P5) -- (P4);
	\draw[front,green] (P5) -- (P6);
	\draw[front,blue] (P7) -- (P6);
	\draw[back,orange] (P3) -- (P2);
	\draw[back,brown] (P1) -- (P2);
	\draw[front,brown] (P1) -- (P0);
	
	\end{tikzpicture}

	\caption{Representation of $Y_2$ (left) and $Y_3$ (right) with the red vertical edge between the two apices and identified edges and vertices have the same color.}\label{P3}
\end{figure}

\subsection{Ordered triangulations and comb representation}

We refer to \cite{aribi2020geometric, kashaev2012tqft} for the following definitions.
A triangulation is called \textit{ordered} when it is endowed with an order on each quadruplet of vertices of each tetrahedron, such that the face gluings respect the vertex order. An equivalent property is that we can orient the edges of the triangulation in a compatible way with the face gluings and such that there are no cycles of length $3$.

\begin{figure}[!h]
	\centering
	
	\begin{tikzpicture}[scale=0.7,inner sep=0.2mm, centered]
	\node[rod] (ab) at (-12,0) {$0$};
	\node[rod] (bb) at (-8,0) {$2$};
	\node[rod] (cb) at (-6,0) {$3$};
	\node[rod] (db) at (-10,0) {$1$};
	\draw[comb] (ab) -- (db);
	\draw[comb] (cb) -- (bb);
	\draw[comb] (bb) -- (db);
	
	\node[rod] (a) at (-2.4,0) {$v_0$};
	\node[rod] (b) at (0,3.6) {$v_2$};
	\node[rod] (c) at (-0.6,-1.4) {$v_1$};
	\node[rod] (d) at (2.4,0) {$v_3$};
	
	\node (abb) at (-12,1) {};
	\node (bbb) at (-8,1) {};
	\node (cbb) at (-6,1) {};
	\node (dbb) at (-10,1) {};
	
	\draw[comb] (abb) -- (ab);
	\draw[comb] (bbb) -- (bb);
	\draw[comb] (cbb) -- (cb);
	\draw[comb] (dbb) -- (db);
	
	\draw[frontbis] (a) -- (b) ;
	\draw[frontbis] (a) -- (c) ;
	\draw[backbis] (a) -- (d);
	\draw[frontbis] (c) -- (b) ;
	\draw[frontbis] (b) -- (d) ;
	\draw[frontbis] (c) -- (d) ;
	\end{tikzpicture}
	
	\caption{An ordered tetrahedron $T$ (right) and its associated comb $C(T)$ (left).}\label{combtetra}
\end{figure}
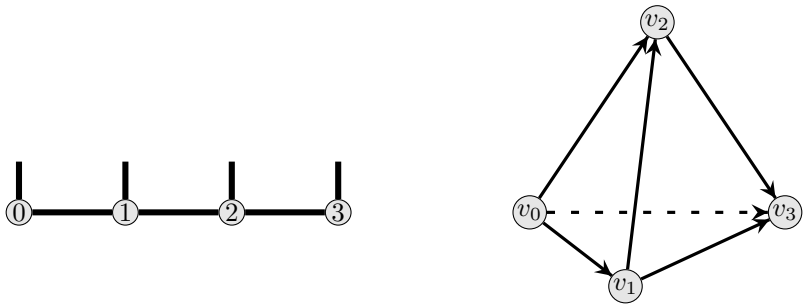

	A \textit{comb} $C$ is a line together with four spikes pointing in the same direction.
	The spikes are numbered 0, 1, 2 and 3 going from left to right, with the spikes pointing upward.
The \textit{comb representation} of an ordered triangulation consists in associating 
 a comb $C(T)$ to each tetrahedron $T$ of the triangulation, as in Figure \ref{combtetra}; each spike numbered $i$ corresponds to a face $f^i$ opposed to a vertex $v_i$ for $i \in \{0,1,2,3\}$, and we join by a line the spike $i$ of $C(T)$ to the spike $j$ of $C(T')$ if   the $i$-th face of $T$ is glued to the $j$-th face of $T'$.

 The comb representation provides a compact way of representing an ordered triangulation, while containing all the information of this triangulation. Moreover,  comb representations are convenient for studying  certain quantum invariants of ordered triangulations (such as the Teichmüller TQFT \cite{aribi2020geometric}). In this sense, it is interesting to observe when a given triangulation admits an ordered structure. This is the case for Firgerio's triangulations $\mathcal{T}_g$, as we now state:
 
 \begin{proposition}\label{prop:ordered}
The triangulations $\mathcal{T}_g$ of the manifolds $M_g$ admit an ordered structure.
 	 \end{proposition}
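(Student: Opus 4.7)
The plan is to exhibit an explicit total order on the vertices of each tetrahedron of $\mathcal{T}_g$ and to verify that every face gluing preserves this order. Recall that $\mathcal{T}_g$ consists of the $2g+2$ tetrahedra $T_0,\ldots,T_{2g+1}$ from Frigerio's construction, with $T_i$ having vertex set $\{ap_1,ap_2,p_i,p_{i+1}\}$ (indices mod $2g+2$), and that the face gluings come in two flavors: the \emph{internal} ones identify the face $[ap_1,ap_2,p_k]$ shared by $T_{k-1}$ and $T_k$, while Frigerio's \emph{external} ones are prescribed differently for even and odd $i$.

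Among the two $p$-vertices of each $T_i$, exactly one index is even and one is odd. The ordering I propose places the even-indexed $p$-vertex at $v_0$, $ap_1$ at $v_1$, the odd-indexed $p$-vertex at $v_2$, and $ap_2$ at $v_3$. Explicitly, $T_i=(p_i,ap_1,p_{i+1},ap_2)$ for $i$ even and $T_i=(p_{i+1},ap_1,p_i,ap_2)$ for $i$ odd. The motivation is that, after identifications, the even-indexed $p_j$ all lie in the toroidal vertex class while the odd-indexed $p_j$ lie in the hyperideal class together with the two apices; this placement yields globally consistent edge orientations across the gluings.

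The verification is then a case check. For the internal face $[ap_1,ap_2,p_k]$, one computes that in both $T_{k-1}$ and $T_k$ the induced order on $\{ap_1,ap_2,p_k\}$ equals $(p_k,ap_1,ap_2)$ if $k$ is even and $(ap_1,p_k,ap_2)$ if $k$ is odd, and in particular does not depend on the tetrahedron. For the external gluings, the source face $[ap_1,p_i,p_{i+1}]$ is opposite $v_3=ap_2$ in $T_i$ and the target face is opposite $v_1=ap_1$ in $T_{i+1}$, so their induced orders are $(v_0,v_1,v_2)$ and $(v_0,v_2,v_3)$ respectively. Applying Frigerio's identification maps separately for even and odd $i$ then shows that the ordered triple of the source is sent exactly onto the ordered triple of the target.

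The main obstacle is identifying the correct ordering. The naive choice $(ap_1,p_i,p_{i+1},ap_2)$ works for the internal gluings and the even-$i$ external ones, but cyclically permutes the target face in the odd-$i$ case, since the map $ap_1\mapsto p_{i+2}$, $p_i\mapsto ap_2$, $p_{i+1}\mapsto p_{i+1}$ is not the identity on the three face-positions. Swapping $p_i$ and $p_{i+1}$ in odd-indexed tetrahedra corrects this asymmetry without disturbing any other gluing, which is precisely the parity-dependent definition above.
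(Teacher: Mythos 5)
Your proof is correct and takes essentially the same approach as the paper, which merely exhibits a specific ordered structure (illustrated only for $g=2,3$ in Figures \ref{fig:T2} and \ref{fig:T3}, with details deferred to \cite{Gos}); your explicit parity-dependent ordering $(p_i,ap_1,p_{i+1},ap_2)$ for $i$ even and $(p_{i+1},ap_1,p_i,ap_2)$ for $i$ odd is indeed compatible with both the internal faces $[ap_1,ap_2,p_k]$ and Frigerio's external gluing rules, for every $g$. Only the closing motivational remark is imprecise: literally swapping $p_i$ and $p_{i+1}$ in the odd-indexed tetrahedra of the naive choice $(ap_1,p_i,p_{i+1},ap_2)$ gives $(ap_1,p_{i+1},p_i,ap_2)$ rather than the ordering you actually define and verify, but this does not affect the proof.
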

 
 The proof is  quite standard, in that we exhibit a specific ordered structure (more details are in \cite{Gos}). For brevity we will only refer to the examples of $g=2$ and $g=3$ represented in Figures \ref{fig:T2} and \ref{fig:T3}. The associated comb representation for $g=2$ is drawn on Figure \ref{comb2}.

\begin{figure}[!h]
	\centering
	\begin{tikzpicture}[scale=4,inner sep=0.2mm, centered]
	\draw [comb,domain=-30:60] plot ({cos(\x)/2}, {sin(\x)/2});
	\draw [comb,domain=90:180] plot ({cos(\x)/2}, {sin(\x)/2});
	\draw [comb,domain=210:300] plot ({cos(\x)/2}, {sin(\x)/2});
	
	\draw [comb,domain=0:90] plot ({cos(\x)}, {sin(\x)});
	\draw [comb,domain=120:210] plot ({cos(\x)}, {sin(\x)});
	\draw [comb,domain=240:330] plot ({cos(\x)}, {sin(\x)});
	
	\node[rod] (a0) at ({cos(0)},{sin(0)}) {$0$};
	\node (ab0) at ({cos(0)*7/8},{sin(0)*7/8}) {};
	\draw[comb] (a0) -- (ab0);
	\node[rod] (a1) at ({cos(30)},{sin(30)}) {$1$};
	\node (ab1) at ({cos(30)*7/8},{sin(30)*7/8}) {};
	\draw[comb] (a1) -- (ab1);
	\node[rod] (a2) at ({cos(60)},{sin(60)}) {$2$};
	\node (ab2) at ({cos(60)*7/8},{sin(60)*7/8}) {};
	\draw[comb] (a2) -- (ab2);
	\node[rod] (a3) at ({cos(90)},{sin(90)}) {$3$};
	\node (ab3) at ({cos(90)*7/8},{sin(90)*7/8}) {};
	\draw[comb] (a3) -- (ab3);

	\node[rod] (b0) at ({cos(60)/2},{sin(60)/2}) {$0$};
	\node (bb0) at ({cos(60)*5/8},{sin(60)*5/8}) {};
	\draw[comb] (b0) -- (bb0);
	\node[rod] (b1) at ({cos(30)/2},{sin(30)/2}) {$1$};
	\node (bb1) at ({cos(30)*5/8},{sin(30)*5/8}) {};
	\draw[comb] (b1) -- (bb1);
	\node[rod] (b2) at ({cos(0)/2},{sin(0)/2}) {$2$};
	\node (bb2) at ({cos(0)*5/8},{sin(0)*5/8}) {};
	\draw[comb] (b2) -- (bb2);
	\node[rod] (b3) at ({cos(-30)/2},{sin(-30)/2}) {$3$};
	\node (bb3) at ({cos(-30)*5/8},{sin(-30)*5/8}) {};
	\draw[comb] (b3) -- (bb3);

	\node[rod] (c0) at ({cos(120)},{sin(120)}) {$0$};
	\node (cb0) at ({cos(120)*7/8},{sin(120)*7/8}) {};
	\draw[comb] (c0) -- (cb0);
	\node[rod] (c1) at ({cos(150)},{sin(150)}) {$1$};
	\node (cb1) at ({cos(150)*7/8},{sin(150)*7/8}) {};
	\draw[comb] (c1) -- (cb1);
	\node[rod] (c2) at ({cos(180)},{sin(180)}) {$2$};
	\node (cb2) at ({cos(180)*7/8},{sin(180)*7/8}) {};
	\draw[comb] (c2) -- (cb2);
	\node[rod] (c3) at ({cos(210)},{sin(210)}) {$3$};
	\node (cb3) at ({cos(210)*7/8},{sin(210)*7/8}) {};
	\draw[comb] (c3) -- (cb3);

	\node[rod] (d0) at ({cos(180)/2},{sin(180)/2}) {$0$};
	\node (db0) at ({cos(180)*5/8},{sin(180)*5/8}) {};
	\draw[comb] (d0) -- (db0);
	\node[rod] (d1) at ({cos(150)/2},{sin(150)/2}) {$1$};
	\node (db1) at ({cos(150)*5/8},{sin(150)*5/8}) {};
	\draw[comb] (d1) -- (db1);
	\node[rod] (d2) at ({cos(120)/2},{sin(120)/2}) {$2$};
	\node (db2) at ({cos(120)*5/8},{sin(120)*5/8}) {};
	\draw[comb] (d2) -- (db2);
	\node[rod] (d3) at ({cos(90)/2},{sin(90)/2}) {$3$};
	\node (db3) at ({cos(90)*5/8},{sin(90)*5/8}) {};
	\draw[comb] (d3) -- (db3);

	\node[rod] (e0) at ({cos(240)},{sin(240)}) {$0$};
	\node (eb0) at ({cos(240)*7/8},{sin(240)*7/8}) {};
	\draw[comb] (e0) -- (eb0);
	\node[rod] (e1) at ({cos(270)},{sin(270)}) {$1$};
	\node (eb1) at ({cos(270)*7/8},{sin(270)*7/8}) {};
	\draw[comb] (e1) -- (eb1);
	\node[rod] (e2) at ({cos(300)},{sin(300)}) {$2$};
	\node (eb2) at ({cos(300)*7/8},{sin(300)*7/8}) {};
	\draw[comb] (e2) -- (eb2);
	\node[rod] (e3) at ({cos(330)},{sin(330)}) {$3$};
	\node (eb3) at ({cos(330)*7/8},{sin(330)*7/8}) {};
	\draw[comb] (e3) -- (eb3);

	\node[rod] (f0) at ({cos(300)/2},{sin(300)/2}) {$0$};
	\node (fb0) at ({cos(300)*5/8},{sin(300)*5/8}) {};
	\draw[comb] (f0) -- (fb0);
	\node[rod] (f1) at ({cos(270)/2},{sin(270)/2}) {$1$};
	\node (fb1) at ({cos(270)*5/8},{sin(270)*5/8}) {};
	\draw[comb] (f1) -- (fb1);
	\node[rod] (f2) at ({cos(240)/2},{sin(240)/2}) {$2$};
	\node (fb2) at ({cos(240)*5/8},{sin(240)*5/8}) {};
	\draw[comb] (f2) -- (fb2);
	\node[rod] (f3) at ({cos(210)/2},{sin(210)/2}) {$3$};
	\node (fb3) at ({cos(210)*5/8},{sin(210)*5/8}) {};
	\draw[comb] (f3) -- (fb3);

	\draw[link] (ab1) -- (bb1);
	\draw[link] (ab2) -- (bb0);
	\draw[link] (ab3) -- (db3);
	
	\draw[link] (cb1) -- (db1);
	\draw[link] (cb2) -- (db0);
	\draw[link] (cb3) -- (fb3);
	
	\draw[link] (eb1) -- (fb1);
	\draw[link] (eb2) -- (fb0);
	\draw[link] (eb3) -- (bb3);
	
	\draw[link] (db2) -- ({cos(120)*11/16},{sin(120)*11/16});
	\draw[link,domain=120:100] plot ({cos(\x)*11/16}, {sin(\x)*11/16});
	\draw[link] ({cos(100)*11/16},{sin(100)*11/16}) -- ({cos(100)*19/16},{sin(100)*19/16});
	\draw[link,domain=-10:100] plot ({cos(\x)*19/16}, {sin(\x)*19/16});
	\draw[link] ({cos(-10)*19/16},{sin(-10)*19/16}) -- ({cos(-10)*13/16},{sin(-10)*13/16});
	\draw[link,domain=-10:0] plot ({cos(\x)*13/16}, {sin(\x)*13/16});
	\draw[link] (ab0) -- ({cos(0)*13/16},{sin(0)*13/16});
	
	\draw[link] (fb2) -- ({cos(240)*11/16},{sin(240)*11/16});
	\draw[link,domain=240:220] plot ({cos(\x)*11/16}, {sin(\x)*11/16});
	\draw[link] ({cos(220)*11/16},{sin(220)*11/16}) -- ({cos(220)*19/16},{sin(220)*19/16});
	\draw[link,domain=110:220] plot ({cos(\x)*19/16}, {sin(\x)*19/16});
	\draw[link] ({cos(110)*19/16},{sin(110)*19/16}) -- ({cos(110)*13/16},{sin(110)*13/16});
	\draw[link,domain=110:120] plot ({cos(\x)*13/16}, {sin(\x)*13/16});
	\draw[link] (cb0) -- ({cos(120)*13/16},{sin(120)*13/16});
	
	\draw[link] (bb2) -- ({cos(0)*11/16},{sin(0)*11/16});
	\draw[link,domain=0:-20] plot ({cos(\x)*11/16}, {sin(\x)*11/16});
	\draw[link] ({cos(-20)*11/16},{sin(-20)*11/16}) -- ({cos(-20)*19/16},{sin(-20)*19/16});
	\draw[link,domain=-130:-20] plot ({cos(\x)*19/16}, {sin(\x)*19/16});
	\draw[link] ({cos(-130)*19/16},{sin(-130)*19/16}) -- ({cos(-130)*13/16},{sin(-130)*13/16});
	\draw[link,domain=-130:-120] plot ({cos(\x)*13/16}, {sin(\x)*13/16});
	\draw[link] (eb0) -- ({cos(-120)*13/16},{sin(-120)*13/16});
	
	\end{tikzpicture}
	\caption{Comb representation of $\mathcal{T}_2$.}\label{comb2}
\end{figure}
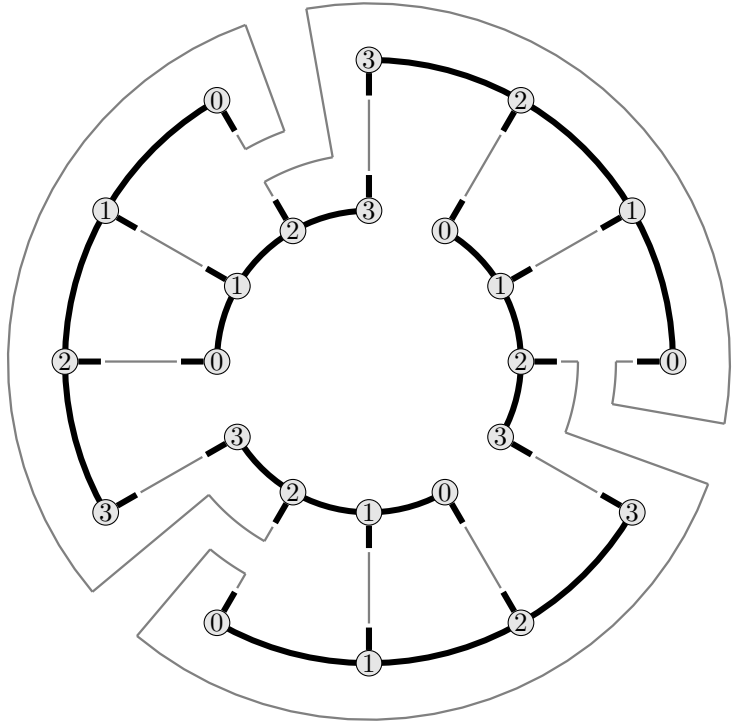

	\subsection{Hyperbolic structure}\label{sec:Mg:hyp}
	
In \cite[Section 2]{Frig}, Frigerio provides the unique angle structure on the tetrahedra of the ideal triangulation $\mathcal{T}_g$ that corresponds to the unique complete hyperbolic structure on the manifold $M_g$. 
Setting $\alpha_g=\frac{\pi}{2g+2}$ , $\beta_g=2 \alpha_g$, $\gamma_g= \arccos((2\cos\alpha_g)^{-1})$ and $\delta_g = \pi - 2 \gamma_g$, the complete angle structure on $\mathcal{T}_g$ follows the pattern of Figure \ref{fig:anglesg}: half of the $2g+2$ tetrahedra are as the one on the left, and the other half are as the one on the right.

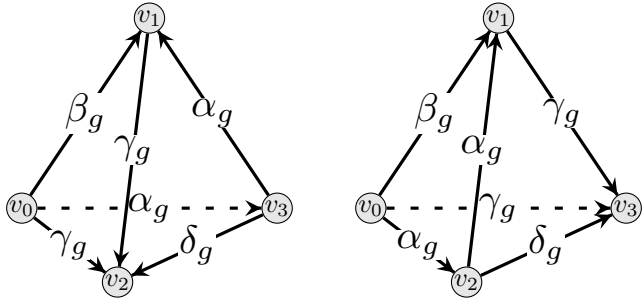
\begin{figure}[!h]
	\centering
	\begin{tikzpicture}[scale=0.7,inner sep=0.05mm, centered]

	\node[rod] (a) at (-2.4,0) {$v_0$};
	\node[rod] (b) at (0,3.6) {$v_1$};
	\node[rod] (c) at (-0.6,-1.4) {$v_2$};
	\node[rod] (d) at (2.4,0) {$v_3$};
	
	\draw[frontbis] (a) -- (b) node[labels] { $\beta_g$};
	\draw[frontbis] (a) -- (c) node[labels] { $\gamma_g$};
	\draw[backbis] (a) -- (d) node[labels] { $\alpha_g$};
	\draw[frontbis] (b) -- (c) node[labels] { $\gamma_g$};
	\draw[frontbis] (d) -- (b) node[labels] { $\alpha_g$};
	\draw[frontbis] (d) -- (c) node[labels] { $\delta_g$};
	
	\end{tikzpicture}
	\qquad
	\begin{tikzpicture}[scale=0.7,inner sep=0.05mm, centered]
	\node[rod] (a) at (-2.4,0) {$v_0$};
	\node[rod] (b) at (0,3.6) {$v_1$};
	\node[rod] (c) at (-0.6,-1.4) {$v_2$};
	\node[rod] (d) at (2.4,0) {$v_3$};
	
	\draw[frontbis] (a) -- (b) node[labels] { $\beta_g$};
	\draw[frontbis] (a) -- (c) node[labels] { $\alpha_g$};
	\draw[backbis] (a) -- (d) node[labels] { $\gamma_g$};
	\draw[frontbis] (c) -- (b) node[labels] { $\alpha_g$};
	\draw[frontbis] (b) -- (d) node[labels] { $\gamma_g$};
	\draw[frontbis] (c) -- (d) node[labels] { $\delta_g$};
	
	\end{tikzpicture}
	\caption{Frigerio's angle structure on $\mathcal{T}_g$}\label{fig:anglesg}
\end{figure}

Both tetrahedra of Figure \ref{fig:anglesg} have the same volume, and the volume of $M_g$ is exactly $2g+2$ times higher. Values of these volumes are computed with {SageMath} (see Section \ref{sec:codehyperbolicvolumeg}) and listed in Figure \ref{fig:volMg}.

	\subsection{Admissible colorings}

In this section we study the set of admissible colorings for the triangulations $\mathcal{T}_g$.
The following three lemmas follow from standard arguments, are not restricted to the manifolds $M_g$, and will be used in the proof of Theorem \ref{thm:allowedstates}. See \cite{Gos} for details.
	
\begin{lemma}\label{lemmaikk}
	Let $i,k \in I_r$. Then $(i,i,k)$ is admissible if and only if $k \in \mathbb{N}$ and $\frac{k}{2} \leq i \leq \frac{r-2-k}{2}$.
\end{lemma}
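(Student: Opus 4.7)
The plan is a direct verification: I would unpack each of the three conditions of Definition \ref{admissibility} applied to the specific triple $(i,i,k)$ and simplify.

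First I would handle the triangle inequalities (i). Condition (i)(a) reads $i+i \geq k$, i.e.\ $i \geq k/2$. Conditions (i)(b) and (i)(c) both reduce to $i+k \geq i$, i.e.\ $k \geq 0$, which is automatic since $k \in I_r \subset \frac{\mathbb{N}}{2}$. So the content of (i) collapses to the single inequality $i \geq k/2$.

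Next I would handle the integrality condition (ii): $i+i+k = 2i+k$ must lie in $\mathbb{N}$. Since $i \in I_r \subset \frac{\mathbb{N}}{2}$, we have $2i \in \mathbb{N}$, so the condition $2i+k \in \mathbb{N}$ is equivalent to $k \in \mathbb{N}$. Finally, condition (iii) reads $2i+k \leq r-2$, which is equivalent to $i \leq \frac{r-2-k}{2}$.

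Combining the three simplified conditions yields precisely $k \in \mathbb{N}$ together with $\frac{k}{2} \leq i \leq \frac{r-2-k}{2}$, giving the stated equivalence. There is no real obstacle: the proof is a one-line bookkeeping check, and the only subtlety is observing that the two non-trivial triangle inequalities (i)(b) and (i)(c) coincide and are automatically satisfied for elements of $I_r$.
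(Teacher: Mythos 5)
Your proof is correct and is exactly the standard direct verification the paper has in mind (the paper itself omits the argument, deferring to \cite{Gos} with the remark that it ``follows from standard arguments''). Each of the three admissibility conditions is correctly specialized to $(i,i,k)$, and the observation that (i)(b) and (i)(c) are automatic for elements of $I_r$ is the only point requiring care.
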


\begin{lemma}\label{ijklmn} Let $i,j,k,l,m \in I_r$ such that $(i,j,k)$ and $(k,l,m)$ are admissible, then $i+j+l+m\in \mathbb{N}$.
\end{lemma}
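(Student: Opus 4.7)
The plan is to exploit admissibility condition (ii) from Definition \ref{admissibility} twice and combine the two resulting integrality statements.

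First, since $(i,j,k)$ is admissible, condition (ii) yields $i+j+k \in \mathbb{N}$. Similarly, since $(k,l,m)$ is admissible, $k+l+m \in \mathbb{N}$. Adding these two integers, I obtain
\[
(i+j+k) + (k+l+m) = (i+j+l+m) + 2k \in \mathbb{N}.
\]
Next, I use the fact that $k \in I_r \subset \frac{\mathbb{N}}{2}$, so $2k$ is a non-negative integer. Subtracting $2k$ from an integer still yields an integer, hence $i+j+l+m \in \mathbb{Z}$. Since $i,j,l,m$ are all non-negative, the sum $i+j+l+m$ is non-negative, so it belongs to $\mathbb{N}$.

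There is no real obstacle here: the argument is essentially a bookkeeping observation using only the integrality clause of admissibility and the fact that elements of $I_r$ are half-integers. No triangle inequality or upper bound from Definition \ref{admissibility} is needed.
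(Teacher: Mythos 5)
Your proof is correct and is precisely the standard argument the paper alludes to (the paper omits the proof of this lemma, citing it as following "from standard arguments" and deferring details to the second author's thesis). Adding the two integrality conditions, subtracting the non-negative integer $2k$, and invoking non-negativity of the remaining terms is exactly the intended bookkeeping.
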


\begin{lemma}\label{lemmaijkn}
	Let $i,j,k \in I_r$ such that $k \in \mathbb{N}$. Then the following conditions are equivalent:
	\begin{enumerate}[(i)]
		\item $i+j+k \in \mathbb{N}$,
		\item $i+j \in \mathbb{N}$,
		\item either $i,j \in \mathbb{N}$ or $i,j \in \frac{\mathbb{N}_{odd}}{2}$.
	\end{enumerate}
\end{lemma}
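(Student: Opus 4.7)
The plan is to establish the chain of equivalences (i) $\Leftrightarrow$ (ii) $\Leftrightarrow$ (iii), both steps being purely elementary arithmetic statements about half-integers. Since all quantities $i,j,k$ lie in $I_r \subset \frac{\mathbb{N}}{2}$ and are therefore non-negative, the distinction between $\mathbb{Z}$ and $\mathbb{N}$ will play no role, and I will move freely between the two.

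For (i) $\Leftrightarrow$ (ii), I would simply invoke the hypothesis $k \in \mathbb{N}$: adding or subtracting an integer does not change whether a real number is an integer. Since $i+j \geq 0$, the conditions $i+j+k \in \mathbb{N}$ and $i+j \in \mathbb{N}$ are equivalent.

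For (ii) $\Leftrightarrow$ (iii), I would write $i = a/2$ and $j = b/2$ with $a,b \in \mathbb{N}$, which is allowed because $i,j \in \frac{\mathbb{N}}{2}$. Then $i + j = (a+b)/2$, so $i+j \in \mathbb{N}$ holds if and only if $a+b$ is even, i.e.\ if and only if $a$ and $b$ have the same parity. The case where both are even corresponds precisely to $i,j \in \mathbb{N}$, and the case where both are odd corresponds to $i,j \in \frac{\mathbb{N}_{odd}}{2}$. This gives exactly the disjunction in (iii), and the reverse implication (iii) $\Rightarrow$ (ii) is immediate by inspection.

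There is no genuine obstacle in this lemma: it is essentially a reformulation of the definitions of $\mathbb{N}$ and $\frac{\mathbb{N}_{odd}}{2}$ combined with the parity arithmetic of halves of natural numbers. The only minor care required is to confirm that working in $\mathbb{N}$ rather than $\mathbb{Z}$ is harmless, which follows from the non-negativity of $i,j,k$.
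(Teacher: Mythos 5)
Your proof is correct, and since the paper itself gives no proof of this lemma (it only notes that it ``follows from standard arguments'' and defers to \cite{Gos}), your elementary parity argument is precisely the kind of routine verification being alluded to. Both equivalences are handled soundly, including the minor point that non-negativity lets one pass between $\mathbb{Z}$ and $\mathbb{N}$ without issue.
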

\begin{remark}Lemma \ref{lemmaijkn} can be applied to condition (ii) of Definition \ref{admissibility} when $i$, $j$ or $k$ is in $\mathbb{N}$.
\end{remark}

Let $g \in \mathbb{N}_{\geq2}$. For a general coloring $c$ at level $r$ of $(M_g,\mathcal{T}_g)$, 
we need the following triples of elements of $I_r$ to be admissible:
\begin{center}
	\begin{tabular}{cccc}
		$(a,b,b)$, & $(a,c_g,c_{0})$, & $(b,c_g,c_g)$, & $(b,c_g,c_{0})$, \\
		and for all $i\in\left\{0,1,2, ...,g-1\right\}$, & $(a,c_i,c_{i+1})$, & $(b,c_i,c_i)$, & $(b,c_i,c_{i+1})$. \\
	\end{tabular}
\end{center}
These admissibility conditions can be rewritten like in the next theorem.

\begin{theorem}\label{thm:allowedstates}
	Let $a$, $b$, $c_0$, $c_1$, $c_2$, ..., $c_{g} \in I_r$.
	
	The conditions $(A)$, $(B)$, $(C)$, $(D)$, $(E)$, $(F)$ and $(G)$ defined below are simultaneously satisfied if and only if the conditions $(1)$, $(2)$, $(3)$, $(4)$, $(5)$ and $(6)$ defined below are simultaneously satisfied.
	
	The conditions  $(A)$, $(B)$, $(C)$, $(D)$, $(E)$, $(F)$ and $(G)$ are written:
	\begin{enumerate}[(A)]
		\item $(a,b,b)$ is admissible,\label{conditionA}
		\item $(a,c_g,c_{0})$ is admissible,\label{conditionB}
		\item $(b,c_g,c_g)$ is admissible,\label{conditionC}
		\item $(b,c_g,c_{0})$ is admissible,\label{conditionD}
		\item $\forall i\in\left\{0,1,2, ...,g-1\right\}$, $(a,c_i,c_{i+1})$ is admissible,\label{conditionE}
		\item $\forall i\in\left\{0,1,2, ...,g-1\right\}$, $(b,c_i,c_i)$ is admissible,\label{conditionF}
		\item $\forall i\in\left\{0,1,2, ...,g-1\right\}$, $(b,c_i,c_{i+1})$ is admissible,\label{conditionG}
	\end{enumerate}

	The conditions $(1)$, $(2)$, $(3)$, $(4)$, $(5)$ and $(6)$ are written:
	\begin{enumerate}[(1)]
		\item $a \in \mathbb{N}$,\label{condition1}
		\item $b \in \mathbb{N}$,\label{condition2}
		\item either $\forall i\in \left\{0,1,2...,g\right\}$, $c_i \in \mathbb{N}$ or $\forall i\in \left\{0,1,2,...,g\right\}$, $c_i \in \frac{\mathbb{N}_{odd}}{2}$, \label{condition3}
		\item $\frac{a}{2}\leq b \leq \frac{r-2-a}{2}$,\label{condition4}
		\item \begin{enumerate}
			\item $\forall i\in \left\{0,1,2...,g-1\right\}$, $\frac{b}{2} \leq c_{i+1} \leq \frac{r-2-b}{2}$, \label{condition5a}
			\item $\forall i\in \left\{0,1,2...,g-1\right\}$, $a - c_i \leq c_{i+1} \leq r - 2 - a - c_i$,\label{condition5b}
			\item $\forall i\in \left\{0,1,2...,g-1\right\}$, $c_i- a \leq c_{i+1} \leq a + c_i$, \label{condition5c}
			\item $\forall i\in \left\{0,1,2...,g-1\right\}$, $c_i- b \leq c_{i+1} \leq b + c_i$, \label{condition5d}
		\end{enumerate} 
		\item \begin{enumerate}
			\item $\frac{b}{2} \leq c_{0} \leq \frac{r-2-b}{2}$, \label{condition6a}
			\item $a - c_g \leq c_{0} \leq r - 2 - a - c_g$, \label{condition6b}
			\item $c_g- a \leq c_{0} \leq a + c_g$, \label{condition6c}
			\item $c_g- b \leq c_{0} \leq b + c_g$. \label{condition6d}
		\end{enumerate} 
	\end{enumerate}
	
\end{theorem}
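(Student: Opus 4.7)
The plan is to prove the two implications separately by unpacking Definition \ref{admissibility} for each of the seven admissibility triples and repeatedly invoking Lemmas \ref{lemmaikk}, \ref{ijklmn} and \ref{lemmaijkn}.

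For the forward direction, I would first exploit the triples with repeated entries. Applying Lemma \ref{lemmaikk} to $(A)$ immediately yields $a \in \mathbb{N}$ (which is $(1)$) together with $\frac{a}{2} \leq b \leq \frac{r-2-a}{2}$, i.e.\ $(4)$. Applying the same lemma to $(C)$ and to each instance of $(F)$ gives $b \in \mathbb{N}$ (which is $(2)$) and the uniform bound $\frac{b}{2} \leq c_i \leq \frac{r-2-b}{2}$ for every $i \in \{0,\ldots,g\}$, which covers $(5a)$ and $(6a)$.

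Next I would establish the parity condition $(3)$. From $(E)$ the sum $a + c_i + c_{i+1}$ is a non-negative integer, and since $a \in \mathbb{N}$ Lemma \ref{lemmaijkn} forces $c_i$ and $c_{i+1}$ to have the same parity class (both in $\mathbb{N}$ or both in $\frac{\mathbb{N}_{odd}}{2}$). Running this through $i = 0, 1, \ldots, g-1$ propagates the parity of $c_0$ to all $c_j$, and the cyclic triple $(B)$ closes the chain consistently. Finally, the remaining inequalities $(5b)$, $(5c)$, $(5d)$, $(6b)$, $(6c)$, $(6d)$ are just the triangle inequalities and the bound $i+j+k \leq r-2$ read off from $(E)$, $(G)$, $(B)$ and $(D)$ after solving for $c_{i+1}$ or $c_0$.

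For the converse, I would check the three clauses of Definition \ref{admissibility} for each of the seven triples. The triangle inequalities and the upper bound $i+j+k \leq r-2$ translate directly into the listed inequalities of $(4)$, $(5)$, $(6)$ (some clauses, e.g.\ the $c_i + c_{i+1} \geq b$ and $b + c_i + c_{i+1} \leq r - 2 - b$ half of the admissibility of $(b, c_i, c_{i+1})$, are not present in $(5d)$ but follow automatically from $(5a)$ and $(6a)$ via $c_i, c_{i+1} \in [\frac{b}{2}, \frac{r-2-b}{2}]$). The integrality condition $i+j+k \in \mathbb{N}$ is handled by combining $(1)$, $(2)$, $(3)$ with Lemma \ref{lemmaijkn}, since consecutive $c_i$'s always sum to an integer.

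The main obstacle is the careful propagation of the parity condition $(3)$ around the entire cyclic chain $c_0, c_1, \ldots, c_g, c_0$, together with verifying that the minimal set of inequalities in $(5)$ and $(6)$ is truly sufficient for all triangle and sum-bound inequalities in $(A)$--$(G)$; once the correspondences are set up, everything else reduces to routine rearrangement of inequalities.
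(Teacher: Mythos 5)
Your proposal is correct and follows essentially the same route as the paper's proof: Lemma \ref{lemmaikk} handles the repeated-entry triples $(A)$, $(C)$, $(F)$, Lemma \ref{lemmaijkn} propagates the parity class along the chain $c_0,\ldots,c_g$ to give condition (3), and the observation that the $c_i+c_{i+1}\geq b$ and $c_i+c_{i+1}\leq r-2-b$ clauses of $(G)$ (and $(D)$) are implied by (5a) and (6a) is exactly the paper's Step 5.1; the only difference is that the paper organizes the argument as a chain of intermediate equivalences rather than two separate implications. (Minor typo aside: the bound you quote should read $b+c_i+c_{i+1}\leq r-2$, equivalently $c_i+c_{i+1}\leq r-2-b$.)
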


\begin{proof}
	Let $a$, $b$, $c_0$, $c_1$, $c_2$, ..., $c_{g} \in I_r$.
	
	\underline{Step 1 : $(\ref{conditionA}) \iff  (\ref{condition1}) \wedge (\ref{condition4}) $};
	
	This step follows immediately from Lemma \ref{lemmaikk}.
	
	\underline{Step 2 : $(\ref{conditionA}) \wedge (\ref{conditionE}) \iff (\ref{condition1}) \wedge (\ref{condition3}) \wedge (\ref{condition4}) \wedge (\ref{condition5b})  \wedge (\ref{condition5c})$};
	
	By Definition \ref{admissibility}, Condition $(\ref{conditionE})$ is equivalent to the following conditions :
	\begin{enumerate}[(Ei)]
		\item \begin{enumerate}[(a)]
			\item $\forall i\in \left\{0,1,2...,g-1\right\}$, $c_i + c_{i+1} \geq a$, \label{ia}
			\item $\forall i\in \left\{0,1,2...,g-1\right\}$, $c_i + a \geq c_{i+1}$, \label{ib}
			\item $\forall i\in \left\{0,1,2...,g-1\right\}$, $a + c_{i+1} \geq c_i$, \label{ic}
		\end{enumerate} 
		\item $\forall i\in \left\{0,1,2...,g-1\right\}$, $c_i + c_{i+1} + a \in \mathbb{N}$, \label{ii}
		\item $\forall i\in \left\{0,1,2...,g-1\right\}$, $c_i + c_{i+1} + a \leq r\-2$. \label{iii}
	\end{enumerate}
	
	To prove Step 2, we first remark that: $$(E\ref{ia}) \wedge (E\ref{iii}) \iff (\ref{condition5b}),$$ $$(E\ref{ib}) \wedge (E\ref{ic}) \iff (\ref{condition5c}).$$
	
	It remains to prove that : $$(\ref{conditionA}) \wedge (E\ref{ii}) \iff (\ref{condition1}) \wedge (\ref{condition3}) \wedge (\ref{condition4}).$$
	It follows from Step 1 that :
	$$(\ref{conditionA}) \wedge (E\ref{ii}) \iff  (\ref{condition1}) \wedge (\ref{condition4}) \wedge (E\ref{ii}). $$
	From Lemma \ref{lemmaijkn}, it follows that $$(\ref{condition1}) \wedge (\ref{condition4}) \wedge (E\ref{ii}) \iff (\ref{condition1}) \wedge (\ref{condition4}) \wedge \left( \forall i\in \left\{0,1,2...,g-1\right\}, \left(c_i, c_{i+1} \in \mathbb{N}\right) \lor \left(c_i, c_{i+1} \in \frac{\mathbb{N}_{odd}}{2}\right) \right).  $$
	Finally, it follows by a quick induction that:
	$$(\ref{condition1}) \wedge (\ref{condition4}) \wedge \left( \forall i\in \left\{0,1,2...,g-1\right\}, \left(c_i, c_{i+1} \in \mathbb{N}\right) \lor \left(c_i, c_{i+1} \in \frac{\mathbb{N}_{odd}}{2}\right) \right) \iff (\ref{condition1}) \wedge (\ref{condition4}) \wedge (\ref{condition3}) .$$ 
	
	\underline{Step 3 : $(\ref{conditionA})\wedge (\ref{conditionB})  \wedge (\ref{conditionE}) \iff (\ref{condition1}) \wedge (\ref{condition3}) \wedge (\ref{condition4}) \wedge (\ref{condition5b}) \wedge (\ref{condition5c}) \wedge (\ref{condition6b}) \wedge (\ref{condition6c})$};
	
	From Step 2, it follows that: $$(\ref{conditionA}) \wedge (\ref{conditionE}) \iff (\ref{condition1}) \wedge (\ref{condition3}) \wedge (\ref{condition4}) \wedge (\ref{condition5b})  \wedge (\ref{condition5c}).$$
	
	By Definition \ref{admissibility}, Condition $(\ref{conditionB})$ is equivalent to the following conditions :
	\begin{enumerate}[(Bi)]
		\item \begin{enumerate}
			\item $c_g + c_{0} \geq a$, \label{iaB}
			\item $c_g + a \geq c_{0}$, \label{ibB}
			\item $a + c_{0} \geq c_g$, \label{icB}
		\end{enumerate} 
		\item $c_g + c_{0} + a \in \mathbb{N}$, \label{iiB}
		\item $c_g + c_{0} + a \leq r\-2$.  \label{iiiB}
	\end{enumerate}
	
	To prove Step 3, we first remark that: $$(B\ref{iaB}) \wedge (B\ref{iiiB}) \iff (\ref{condition6b}),$$
	$$(B\ref{ibB}) \wedge (B\ref{icB}) \iff (\ref{condition6c}).$$
	
	Finally, let us prove the following implication which will imply Step 3: $$(\ref{conditionA}) \wedge (\ref{conditionE}) \implies (B\ref{iiB}).$$
	From Step 2, it follows that $$(\ref{conditionA}) \wedge (\ref{conditionE}) \implies (\ref{condition1}) \wedge (\ref{condition3}).$$ Furthermore, using Lemma \ref{lemmaijkn}, we have that: $$(\ref{condition1}) \wedge (\ref{condition3}) \implies (B\ref{iiB}).$$  
	
	\underline{Step 4 : $(\ref{conditionC}) \wedge (\ref{conditionF}) \iff (\ref{condition2}) \wedge (\ref{condition5a}) \wedge (\ref{condition6a})$};
	
	This step follows immediately from Lemma \ref{lemmaikk}.
	
	\underline{Step 5 : $(\ref{conditionA}) \wedge (\ref{conditionB})\wedge (\ref{conditionC})\wedge (\ref{conditionE})\wedge (\ref{conditionF}) \wedge (\ref{conditionG}) \iff $}\\
	\underline{$ (\ref{condition1})\wedge(\ref{condition2})\wedge (\ref{condition3})\wedge(\ref{condition4})\wedge (\ref{condition5a})\wedge (\ref{condition5b})\wedge (\ref{condition5c})\wedge (\ref{condition5d})\wedge (\ref{condition6a})\wedge (\ref{condition6b}) \wedge (\ref{condition6c})$};
	
	From Step 3, it follows that: $$(\ref{conditionA})\wedge (\ref{conditionB})  \wedge (\ref{conditionE}) \iff (\ref{condition1}) \wedge (\ref{condition3}) \wedge (\ref{condition4}) \wedge (\ref{condition5b}) \wedge (\ref{condition5c}) \wedge (\ref{condition6b}) \wedge (\ref{condition6c}).$$
	
	From Step 4, it follows that: $$(\ref{conditionC}) \wedge (\ref{conditionF}) \iff (\ref{condition2}) \wedge (\ref{condition5a}) \wedge (\ref{condition6a}).$$
	
	By Definition \ref{admissibility}, Condition $(\ref{conditionG})$ is equivalent to the following conditions :
	\begin{enumerate}[(Gi)]
		\item \begin{enumerate}
			\item $\forall i\in \left\{0,1,2...,g-1\right\}$, $c_i + c_{i+1} \geq b$, \label{iab}
			\item $\forall i\in \left\{0,1,2...,g-1\right\}$, $c_i + b \geq c_{i+1}$, \label{ibb}
			\item $\forall i\in \left\{0,1,2...,g-1\right\}$, $b + c_{i+1} \geq c_i$, \label{icb}
		\end{enumerate} 
		\item $\forall i\in \left\{0,1,2...,g-1\right\}$, $c_i + c_{i+1} + b \in \mathbb{N}$, \label{iib}
		\item $\forall i\in \left\{0,1,2...,g-1\right\}$, $c_i + c_{i+1} + b \leq r\-2$. \label{iiib}
	\end{enumerate}
	
	Following what precedes, in order to prove Step 5, we only need to prove that:
	\begin{enumerate}
		\item[Step 5.1]$(\ref{conditionC}) \wedge (\ref{conditionF}) \implies (G\ref{iab}) \wedge (G\ref{iiib})$,
		\item[Step 5.2]$(G\ref{ibb}) \wedge (G\ref{icb}) \iff (\ref{condition5d})$,
		\item[Step 5.3]$(\ref{conditionA}) \wedge (\ref{conditionE}) \implies (G\ref{iib})$.
	\end{enumerate}
	
	Let us prove Step 5.1. From Step 4, it follows that $$(\ref{conditionC}) \wedge (\ref{conditionF}) \implies (\ref{condition5a}) \wedge (\ref{condition6a}).$$ Furthermore, we have by adding inequalities that : $$(\ref{condition5a}) \wedge (\ref{condition6a})\implies (G\ref{iab}) \wedge (G\ref{iiib}).$$
	
	Step 5.2 follows immediately from the definitions.
	
	Finally, let us prove Step 5.3. From Step 2, it follows that $$(\ref{conditionA}) \wedge(\ref{conditionE}) \implies (\ref{condition1}) \wedge (\ref{condition3}).$$ Furthermore, using Lemma \ref{lemmaijkn}, we have that: $$(\ref{condition1}) \wedge (\ref{condition3}) \implies (G\ref{iib}).$$  
	
	This concludes the proof of Step 5.
	
	\underline{Step 6 : $(\ref{conditionA}) \wedge (\ref{conditionB}) \wedge (\ref{conditionC}) \wedge (\ref{conditionD}) \wedge (\ref{conditionE}) \wedge (\ref{conditionF})   \wedge (\ref{conditionG}) \iff$}\\
	\underline{$(\ref{condition1}) \wedge(\ref{condition2}) \wedge (\ref{condition3}) \wedge (\ref{condition4}) \wedge (\ref{condition5a}) \wedge (\ref{condition5b}) \wedge (\ref{condition5c}) \wedge (\ref{condition5d}) \wedge (\ref{condition6a}) \wedge (\ref{condition6b}) \wedge (\ref{condition6c})  \wedge (\ref{condition6d})$};
	
	We prove Step 6 almost exactly as we proved Step 5. The only difference being the $(G)$ conditions become $(D)$ conditions and $(5d)$ becomes $(6d)$.
	
	This concludes the proof of the theorem.
\end{proof}

As we will see, Theorem \ref{thm:allowedstates} can be used to simplify the formula and computation of the Turaev--Viro invariants for the manifolds 
$M_g$, and to write a code for computing these invariants numerically (see Section \ref{sec:codetvr}).

\section{Annotated code}\label{sec:code}

The following codes have been written with SageMath, a free open-source mathematics software system using Python 3.
In this section, let us fix a pair $(r,s)\in \mathbb{N}^2$ such that $r\geq3$ and $s\geq1$.
Let $g\in \N_{\geqslant 2}$. Let us recall the following notation:

\begin{notation}
	Let $\frac{\mathbb{N}}{2}=\{0,\frac{1}{2},1,...\}$ denote the set of non-negative half integers. 
	Let $\frac{\mathbb{N}_{odd}}{2}=\{\frac{1}{2},\frac{3}{2},...\}$ denote the set of non-negative half-odd-integers. 
	Let $I_r$ denote the subset $\left\{0,\frac{1}{2},1,...,\frac{r-2}{2}\right\}$ of $\frac{\mathbb{N}}{2}$. 
\end{notation}
\subsection{Computing the hyperbolic volume of $M_g$}\label{sec:computing:hyp}
In this section, we construct a function which computes the volume of a tetrahedron from its dihedral angles, via Ushijima's volume formula (see Proposition \ref{prop:formula:ush}). 
We then apply this function to the manifolds $M_g$ with triangulations $\mathcal{T}_g$.

\subsubsection{The volume formula for hyperbolic tetrahedra}\label{sec:computing:ushijima}

We follow the steps and notation of Section  \ref{sec:prelim:hyp:vol}.
We first import the \textit{NumPy} package which is an easy to use,  open source package in Python. This package  contains the function \textbf{pi} giving an approximate value of $\pi$ that we will use later on. Functions imported from this package will be written in the code with the prefix \textbf{np.}, which means we access the function inside the package \textit{NumPy}.
The function \textbf{Gramdet(A,B,C,D,E,F)} computes the determinant of the Gram matrix associated to  $A, B, C, D, E, F \in \left[ 0, \pi \right]$. 
\begin{lstlisting}[language=Python, caption={The determinant of the Gram matrix of $T$}]
import numpy as np

def Gramdet(A,B,C,D,E,F):
G = matrix([	[1,-cos(A),-cos(B),-cos(F)],
[-cos(A),1,-cos(C),-cos(E)],
[-cos(B),-cos(C),1,-cos(D)],
[-cos(F),-cos(E),-cos(D),1]])
res = G.determinant()
return res
\end{lstlisting}

The \textbf{U(z,A,B,C,D,E,F)} function computes $U(z,T)$ for a given complex number $z$ and the dihedral angles $A,B,C,D,E$ and $F$ of a given generalized tetrahedron $T$. We use the \textbf{dilog(z)} function for $z$ a complex number to compute $\mathrm{Li_2}(z)$.
\begin{lstlisting}[language=Python, caption={The complex valued fonction $U(z,T)$}]
def U(z,A,B,C,D,E,F):
a = exp(I*A)
b = exp(I*B)
c = exp(I*C)
d = exp(I*D)
e = exp(I*E)
f = exp(I*F)

z1 = a*b*d*e*z
z2 = a*c*d*f*z
z3 = b*c*e*f*z
z4 = -a*b*c*z
z5 = -a*e*f*z
z6 = -b*d*f*z
z7 = -c*d*e*z

res = 1/2*(dilog(z)+dilog(z1)+dilog(z2)+dilog(z3)-dilog(z4)-dilog(z5)-dilog(z6)-dilog(z7))
return res
\end{lstlisting}

Finally, the \textbf{TetVolum(A,B,C,D,E,F)} function uses the formula of Proposition \ref{prop:formula:ush} to compute the hyperbolic volume of a tetrahedron given its dihedral angles $A$, $B$, $C$, $D$, $E$ and $F$.
\begin{lstlisting}[language=Python, caption={The hyperbolic volume of $T$}]
def TetVolum(A,B,C,D,E,F):
a = exp(I*A)
b = exp(I*B)
c = exp(I*C)
d = exp(I*D)
e = exp(I*E)
f = exp(I*F)

det = Gramdet(A,B,C,D,E,F)

zminus = -2*((sin(A)*sin(D)+sin(B)*sin(E)+sin(C)*sin(F)-sqrt(det))/(a*d+b*e+c*f+a*b*f+a*c*e+b*c*d+d*e*f+a*b*c*d*e*f))

zplus = -2*((sin(A)*sin(D)+sin(B)*sin(E)+sin(C)*sin(F)+sqrt(det))/(a*d+b*e+c*f+a*b*f+a*c*e+b*c*d+d*e*f+a*b*c*d*e*f))

res = imag((U(zminus,A,B,C,D,E,F)-U(zplus,A,B,C,D,E,F))/2)
return res
\end{lstlisting}

\subsubsection{Applying the volume formula on tetrahedra of $\mathcal{T}_g$}\label{sec:codehyperbolicvolumeg}

Following Frigerio's computation of the complete angle structure detailed in Section \ref{sec:Mg:hyp}, we define the function \textbf{HyperbolicVolume(g)} to compute the hyperbolic volume of $M_g$ given $g \in \{2,3,4,...\}$. Thanks to the symmetries in the complete angle structure, this function computes the hyperbolic volume of one tetrahedron and multiplies it by $2g+2$ (the number of tetrahedra in the triangulation). 
We display several values for this function for some values of $g$ in Figure \ref{fig:volMg}.

\begin{lstlisting}[language=Python, caption={The hyperbolic volume of $M_g$ for a given $g$}]
def HyperbolicVolume(g):
Alphag=np.pi/(2*g+2)
Betag=2*Alphag
Gammag=arccos(1/(2*cos(Alphag)))
Deltag=np.pi - 2*Gammag

res=(2*g+2)*TetVolum(Gammag,Deltag,Gammag,Alphag,Betag,Alphag)
return res
\end{lstlisting}

\begin{figure}[!h]
	\centering
	\begin{tabular}{|c|c|c|}
		\hline 
		$g$ & $\Vol(T_g)$ & $\Vol(M_g)$ \\ 
		\hline 
		2 & 2.007682006682397 & 12.046092040094381 \\ 
		\hline 
		3 & 2.2547631818606026 & 18.03810545488482 \\ 
		\hline 
		4 & 2.3603494908554774
		& 23.603494908554772 \\ 
		\hline 
		5 & 2.415787949187158 & 28.989455390245897 \\ 
		\hline 
		6 & 2.448617485457304 & 34.28064479640226 \\ 
		\hline 
		7 & 2.469695490891516 & 39.51512785426426 \\ 
		\hline 
		8 & 2.484045062029212 & 44.71281111652581 \\ 
		\hline 
		9 & 2.494259571737797 & 49.88519143475594 \\ 
		\hline 
		10 & 2.5017908556003303 & 55.039398823207264 \\ 
		\hline 
		100 & 2.5369350366401 & 512.4608774013002 \\ 
		\hline 
		1000 & 2.5373497508910896 & 5079.774201283962 \\ 
		\hline 
	\end{tabular} 
	\caption{The hyperbolic volumes of $M_g$ and one tetrahedron $T_g$ of $\mathcal{T}_g$ for various values of $g$.}\label{fig:volMg}
\end{figure}

\subsection{Computing the Turaev--Viro invariants $TV_{r,s}(M_g,\mathcal{T}_g)$}\label{sec:codetvr}
In this section, we will construct several functions in order to compute the Turaev--Viro invariants $TV_{r,s}(M_2,\mathcal{T}_2)$ and $TV_{r,s}(M_3,\mathcal{T}_3)$ given $(r,s)\in \mathbb{N}^2$ such that $r\geq3$ and $s\geq1$.  These functions can easily be extended to compute the other Turaev--Viro invariants $TV_{r,s}(M_g,\mathcal{T}_g)$ for $g\geq4$ (which we will do for $4 \leqslant g \leqslant 7$). 
%We will refer to definitions of Sections REF as needed.

We first import the \textit{NumPy} package as in Section \ref{sec:computing:ushijima}. We also import the \textit{cmath} package which gives us mathematical functions for complex numbers, such as square roots for negative numbers. Functions imported from \textit{cmath} will be written with the prefix \textbf{cm.}.
\begin{lstlisting}[language=Python, caption={Importing NumPy and cmath}]
import numpy as np
import cmath as cm
\end{lstlisting}
The function \textbf{quantum\_number(r,s,n)} returns the quantum number $[n]$ given $r$, $s$ and $n\in \mathbb{N}$.
% (see Definition \ref{quantumnumber}). \\
\begin{lstlisting}[language=Python, caption={Quantum number}]
def quantum_number(r,s,n):
res=sin(s*n*(np.pi)/r)/sin(s*(np.pi)/r)
return res
\end{lstlisting}
The function \textbf{quantum\_factorial(r,s,n)} returns the factorial of a quantum number $[n]! $ given $r$, $s$ and $n\in \mathbb{N}$.

\begin{lstlisting}[language=Python, caption={Quantum factorial}] 
def quantum_factorial(r,s,n):
if n == 0:
return 1
return prod([quantum_number(r,s,i) for i in range(1, n+1)])
\end{lstlisting}
The function \textbf{q\_big\_delta\_coeff(i,j,k,r,s)} returns the coefficients
$\Delta(i,j,k)$
given $r$, $s$ and $(i,j,k)$ an admissible triple of elements of $I_r$.
Admissibility conditions 
ensure that $i + j - k$, $i + k - j$, $j + k - i$ and $i + j + k + 1$ are in $ \mathbb{N}$ but since the values of $i$, $j$ and $k$ can be half integers, the values $i + k - j$, $j + k - i$ and $i + j + k + 1$ are stored as rational type variable. To ensure that the function \textbf{quantum\_factorial()} works, we have to change their type from rational to integers using the $int()$ function.
Since \textbf{quantum\_number()} and thus \textbf{quantum\_factorial()} can return negative values,  the term we take the square root of might thus be negative. Hence we need to use the function \textbf{cm.sqrt()}, i.e the complex square root from the \textit{cmath} package. \\
\begin{lstlisting}[language=Python, caption={$\Delta$ coefficients}]
def q_big_delta_coeff(i,j,k,r,s):
argsqrt = (quantum_factorial(r,s,int(i + j - k)) * quantum_factorial(r,s,int(i + k - j)) * quantum_factorial(r,s,int(j + k - i))) / quantum_factorial(r,s,int(i + j + k + 1))

res = cm.sqrt(argsqrt)
return res
\end{lstlisting}
The function \textbf{q\_symbol(i, j, k, l, m, n, r, s)} returns the quantum $6j$-symbol
$\begin{vmatrix} 
i & j & k \\ 
l & m & n
\end{vmatrix}$
given $r$, $s$ and $(i,j,k,l,m,n)$ a sextuple of elements of $I_r$ such that $(i,j,k)$, $(j,l,n)$, $(i,m,n)$ and $(k,l,m)$ are admissible.

\begin{lstlisting}[language=Python, caption={Quantum \textit{$6j$}-symbols}]
def q_symbol(i, j, k, l, m, n, r, s):
prefac = q_big_delta_coeff(i, j, k, r, s) * q_big_delta_coeff(j, l, n, r, s) * q_big_delta_coeff(i, m, n, r, s) * q_big_delta_coeff(k, l, m, r, s)

zmin = max(i + j + k, j + l + n, i + m + n, k + l + m)
zmax = min(i + j + l + m, i + k + l + n, j + k + m + n)

sumres = 0
for z in range(int(zmin), int(zmax) + 1):
den = quantum_factorial(r,s,int(z - (i + j + k))) * quantum_factorial(r,s,int(z - (j + l + n))) * quantum_factorial(r,s,int(z - (i + m + n))) * quantum_factorial(r,s,int(z - (k + l + m))) * quantum_factorial(r,s,int(i + j + l + m - z)) * quantum_factorial(r,s,int(i + k + l + n - z)) * quantum_factorial(r,s,int(j + k + m + n - z))

sumres = sumres + (((-1) ** z) * quantum_factorial(r,s,int(z + 1))) / den

res = prefac * sumres * sqrt(-1)**(int(2*(i+j+k+l+m+n)))
return res
\end{lstlisting}
The function \textbf{edge(a,r,s)} yields the 
term $w_a:=(\-1)^{2a}[2a+1]$ 
for $r$, $s$ and $a \in I_r$.

\begin{lstlisting}[language=Python, caption={Edge term}]
def edge(a,r,s) :
res = ((-1)**(2*a))*quantum_number(r,s,int(2*a+1))
return res
\end{lstlisting}
The function \textbf{term(a,b,c,g,r,s)} computes one term of the sum in the Definition \ref{TuraevVirosum} applied to $(M_g,\mathcal{T}_g)$  given $r$, $s$ as before, $g \in \{2,3,4,... \}$, $a \in I_r$, $b \in I_r$ and $c=(c_0,c_1,c_2,...,c_{g}) \in I_r^{g+1}$  satisfying the conditions in Theorem \ref{thm:allowedstates}. This one term is
equal to the product of $w_a w_b w_{c_0} \ldots w_{c_g}$ with
$$ \scriptsize
\begin{vmatrix} 
a & b & b \\ 
c_0 & c_0 & c_g
\end{vmatrix} 
\begin{vmatrix} 
a & b & b \\ 
c_0 & c_0 & c_1
\end{vmatrix} \ldots
\begin{vmatrix} 
a & b & b \\ 
c_{i} & c_{i} & c_{i-1}
\end{vmatrix}
\begin{vmatrix} 
a & b & b \\ 
c_{i} & c_{i} & c_{i+1}
\end{vmatrix} \ldots 
\begin{vmatrix} 
a & b & b \\ 
c_{g} & c_{g} & c_{g-1}
\end{vmatrix}
\begin{vmatrix} 
a & b & b \\ 
c_{g} & c_{g} & c_{0}
\end{vmatrix}.
$$

\begin{lstlisting}[language=Python, caption={One term of the sum in $TV_{r,s}(M_g,\mathcal{T}_g)$ for a given admissible coloring}]
def term(a,b,c,g,r,s):
res=edge(a, r, s)*edge(b, r, s)

for i in range(g+1):
if i==0:
res = res * edge(c[i], r, s) * q_symbol(a, b, b, c[i], c[i], c[g], r, s) * q_symbol(a, b, b, c[i], c[i], c[i+1], r, s)
elif i==g:
res = res * edge(c[i], r, s) * q_symbol(a, b, b, c[i], c[i], c[i-1], r, s) * q_symbol(a, b, b, c[i], c[i], c[0], r, s)
else:
res = res * edge(c[i], r, s) * q_symbol(a, b, b, c[i], c[i], c[i-1], r, s) * q_symbol(a, b, b, c[i], c[i], c[i+1], r, s)

return res
\end{lstlisting}

The function \textbf{turaevvirog2(r,s)} computes $TV_{r,s}(M_2,\mathcal{T}_2)$, while the function \textbf{turaevvirog3(r,s)}  computes $TV_{r,s}(M_3,\mathcal{T}_3)$ given $r$ and $s$.
These two functions consists in listing all the admissible colorings in the sense of Theorem \ref{thm:allowedstates} and computing the associated term contributing to the sum in $TV_{r,s}(M_g,\mathcal{T}_g)$ via the \textbf{term()} function for $g\in \{ 2,3\}$.

We now detail the reasoning behind the following code. Let $a,b,c_0,...,c_g \in I_r$.

\underline{The $a$ loop:}
For the coloring to be admissible, $a$ has to satisfy: \begin{itemize}
	\item [(1)]$a \in \mathbb{N}$.
\end{itemize}
The first loop is over the label $a$ which covers all integers from $0$ to $\frac{r-2}{2}$. We will refer to it as the \textit{$a$ loop}. The \textbf{floor()} function ensures that the loop ends at $\left \lfloor{\frac{r-2}{2}}\right \rfloor $.

\underline{The $b$ loop:}
For $a$ fixed and an admissible coloring, $b$ has to satisfy: \begin{itemize}
	\item[(2)] $b \in \mathbb{N}$,
	\item[(4)] $\frac{a}{2}\leq b \leq \frac{r-2-a}{2}$.
\end{itemize}
We  thus create, inside the $a$ loop, a second loop over the label $b$ (called the \textit{$b$ loop}), which covers all integers from $\frac{a}{2}$ to $\frac{r-2-a}{2}$. The \textbf{floor()} and \textbf{ceil()} functions ensure that the loop ends at $\left \lfloor{\frac{r-2-a}{2}}\right \rfloor $  and starts at $\left \lceil{\frac{a}{2}}\right \rceil $.

\underline{The two kinds of nested loops inside the $b$ loop:}
For an admissible coloring, the labels $c_i$ (for $i \in \{0,1,2,...,g\}$) have to satisfy:\begin{itemize}
	\item[(3)]either ($\forall i \in \{0,1,2,...,g\} $, $c_i\in \mathbb{N}$) or ($\forall i \in \{0,1,2,...,g\} $, $c_i \in \frac{\mathbb{N}_{odd}}{2}$).
\end{itemize} We denote those two categories as \textit{integer states} and \textit{half-integer states}.
Thus, we create two different loops inside the $b$ loop (one for each).

\underline{The first family of nested loops, for integer states:}
Let us assume ($\forall i \in \{0,\ldots,g\} $, $c_i\in \mathbb{N}$). For a fixed $b$ and an admissible coloring, the label $c_0$ has to satisfy:\begin{itemize}
	\item[(6a)] $\frac{b}{2} \leq c_{0} \leq \frac{r-2-b}{2}$.
\end{itemize} We  thus create, inside the $b$ loop, a loop over the label $c_0$, the \textit{$c_0$ loop}, which covers all integers from $\left \lceil{\frac{b}{2}}\right \rceil $ to $\left \lfloor{\frac{r-2-b}{2}}\right \rfloor $. This $c_0$ loop will contain a new $c_1$ \textit{loop}, which will in turn contain a $c_2$ \textit{loop}, and so on until a final $c_g$ \textit{loop}. Let us now detail how this induction works.
For $i \in \{0,...,g-1\}$, for fixed $a$, $b$, $c_i$ and an admissible coloring, the label $c_{i+1}$ has to satisfy the following conditions: \begin{itemize}
	\item[(5a)]$\frac{b}{2} \leq c_{i+1} \leq \frac{r-2-b}{2}$,
	\item[(5b)]$a - c_i \leq c_{i+1} \leq r-2-a-c_i$,
	\item[(5c-d)]$c_i - \min(a,b)\leq c_{i+1} \leq c_i + \min(a,b)$.
\end{itemize}
For all $i \in \{0,...,g-1\}$, we create in the loop $c_i$ two variables $m_{i+1}$ and $M_{i+1}$: $m_{i+1}$ is the maximum of the three lower bounds on $c_{i+1}$ and $M_{i+1}$ is the minimum of the three upper bounds on $c_{i+1}$. We  thus create, inside the $c_i$ loop, a loop over the label $c_{i+1}$ (called the $c_{i+1}$ \textit{loop}), which ranges to all integers from $\left \lceil{m_{i+1}}\right \rceil $ to $\left \lfloor{M_{i+1}}\right \rfloor $. Finally, for fixed $a$, $b$, $c_0, \ldots, c_g$, for the coloring to be admissible, the following conditions  also need to be satisfied:  
\begin{itemize}
	\item[(6b)]$a - c_g \leq c_{0} \leq r-2-a-c_g$,
	\item[(6c-d)]$c_g - \min(a,b)\leq c_{0} \leq c_g + \min(a,b)$.
\end{itemize} Hence we add an \textbf{if} loop that checks those two conditions. If they are satisfied, we then compute the term associated to the coloring $(a, b, c_0, c_1, ..., c_g)$ via \textbf{term()}. 

\underline{The second family of nested loops, for half-integer states:}
Once we have covered all the integer states, we create inside the $b$ loop, a second loop over the label $c_0$, which covers all half-integers from $\left \lfloor{\frac{b}{2}}\right \rfloor + \frac{1}{2}$ to $\left \lceil{\frac{r-2-b}{2}}\right \rceil - \frac{1}{2}$. The function \textbf{np.arange()} allows us to start and end loops at non-integer values and specify the step of the loop to be 1. 
The rest of this new $c_0$ loop is constructed in the same way as for the $c_0$ loop for integer states, but the nested loops are over half-integers instead of integers.

\underline{Conclusion:}
As proved in Theorem \ref{thm:allowedstates}, these loops go over all admissible states, thus we obtain a numerical value of the exact formula $TV_{r,s}(M_g,\mathcal{T}_g)$ for $g\in \{ 2,3\}$. In the same way, one can construct a function which computes $TV_{r,s}(M_g,\mathcal{T}_g)$ for $g \geq 4$ (one would simply need to add more nested loops in the two families).
In the current project, we did exactly this: we defined the functions \textbf{turaevvirog4(r,s)}, \textbf{turaevvirog5(r,s)}, \textbf{turaevvirog6(r,s)} and \textbf{turaevvirog7(r,s)}, but for the sake of brevity we do not write their codes here (their codes can be easily guessed from the detailed examples of \textbf{turaevvirog2(r,s)} and \textbf{turaevvirog3(r,s)}).

\begin{lstlisting}[language=Python, caption={$TV_{r,s}(M_2,\mathcal{T}_2)$}]
def turaevvirog2(r,s):
res= 0
for a in range(floor((r-2)/2)+1):
for b in range(ceil(a/2),floor((r-2-a)/2)+1):
m=min(a,b)
for c0 in range(ceil(b/2),floor((r-2-b)/2)+1):
m1=max(c0-m,a-c0,b/2)
M1=min(c0+m,r-2-a-c0,(r-2-b)/2)
for c1 in range(ceil(m1),floor(M1)+1):
m2=max(c1-m,a-c1,b/2)
M2=min(c1+m,r-2-a-c1,(r-2-b)/2)
for c2 in range(ceil(m2),floor(M2)+1):
if (-m<=c2-c0<=m) and (a<=c2+c0<=r-2-a):
c=[c0,c1,c2]
res=res+term(a,b,c,2,r,s)

for c0 in np.arange(floor(b/2)+1/2,ceil((r-2-b)/2),1):
m1=max(c0-m,a-c0,b/2)
M1=min(c0+m,r-2-a-c0,(r-2-b)/2)
for c1 in np.arange(floor(m1)+1/2,ceil(M1),1):
m2=max(c1-m,a-c1,b/2)
M2=min(c1+m,r-2-a-c1,(r-2-b)/2)
for c2 in np.arange(floor(m2)+1/2,ceil(M2),1):
if (-m<=c2-c0<=m) and (a<=c2+c0<=r-2-a):
c=[c0,c1,c2]
res=res+term(a,b,c,2,r,s)
return res       
\end{lstlisting}
\begin{lstlisting}[language=Python, caption={$TV_{r,s}(M_3,\mathcal{T}_3)$}]
def turaevvirog3(r,s):
res= 0
for a in range(floor((r-2)/2)+1):
for b in np.arange(ceil(a/2),floor((r-2-a)/2)+1,1):
m=min(a,b)
for c0 in np.arange(ceil(b/2),floor((r-2-b)/2)+1,1):
m1=max(c0-m,a-c0,b/2)
M1=min(c0+m,r-2-a-c0,(r-2-b)/2)
for c1 in np.arange(ceil(m1),floor(M1)+1,1):
m2=max(c1-m,a-c1,b/2)
M2=min(c1+m,r-2-a-c1,(r-2-b)/2)
for c2 in np.arange(ceil(m2),floor(M2)+1,1):
m3=max(c2-m,a-c2,b/2)
M3=min(c2+m,r-2-a-c2,(r-2-b)/2)
for c3 in np.arange(ceil(m3),floor(M3)+1,1):
if (-m<=c3-c0<=m) and (a<=c3+c0<=r-2-a):
c=[c0,c1,c2,c3]
res=res+term(a,b,c,3,r,s)

for c0 in np.arange(floor(b/2)+1/2,ceil((r-2-b)/2),1):
m1=max(c0-m,a-c0,b/2)
M1=min(c0+m,r-2-a-c0,(r-2-b)/2)
for c1 in np.arange(floor(m1)+1/2,ceil(M1),1):
m2=max(c1-m,a-c1,b/2)
M2=min(c1+m,r-2-a-c1,(r-2-b)/2)
for c2 in np.arange(floor(m2)+1/2,ceil(M2),1):
m3=max(c2-m,a-c2,b/2)
M3=min(c2+m,r-2-a-c2,(r-2-b)/2)
for c3 in np.arange(floor(m3)+1/2,ceil(M3),1):
if (-m<=c3-c0<=m) and (a<=c3+c0<=r-2-a):
c=[c0,c1,c2,c3]
res=res+term(a,b,c,3,r,s)
return res       
\end{lstlisting}
\subsection{Asymptotic behavior of $QV_{r,2}(M_2)$ and $QV_{r,2}(M_3)$}\label{sec:code:asymp}
The function \textbf{quantumvirog2(r,s)} computes $QV_{r,s}(M_2)=\frac{s\pi}{r-2} \log\left(TV_{r,s}(M_2,\mathcal{T}_2) \right)$ given $(r,s)\in \mathbb{N}^2$ such that $r\geq3$ and $s\geq1$.
\begin{lstlisting}[language=Python, caption={$QV_{r,s}(M_2,\mathcal{T}_2)$}]
def quantumvirog2(r,s):
res=(s*(np.pi)/(r-2))*log(turaevvirog2(r,s))
return res
\end{lstlisting}
The function \textbf{quantumvirog3(r,s)} computes $QV_{r,s}(M_3)=\frac{s\pi}{r-2} \log\left(TV_{r,s}(M_3,\mathcal{T}_3) \right)$ $(r,s)\in \mathbb{N}^2$ such that $r\geq3$ and $s\geq1$.
\begin{lstlisting}[language=Python, caption={$QV_{r,s}(M_3,\mathcal{T}_3)$}]
def quantumvirog3(r,s):
res=(s*(np.pi)/(r-2))*log(turaevvirog3(r,s))
return res
\end{lstlisting}
Similarly, the definitions of \textbf{quantumvirog4(r,s)}, \textbf{quantumvirog5(r,s)}, \textbf{quantumvirog6(r,s)} and \textbf{quantumvirog7(r,s)} follow immediately.

Let us recall that we aim for a numerical test of Conjecture \ref{conjecture}. Thus, we set $s=2$ in order to numerically compute $QV_{r,2}(M_2)$ and $QV_{r,2}(M_3)$.

Note that, for some $r$ it happens that $TV_{r,2}(M_2,\mathcal{T}_2) $ is negative. Since we require the argument of the complex logarithm function to be in $[0,2\pi[$, the imaginary part of $QV_{r,2}(M_2)$ is either 0 when $TV_{r,2}(M_2,\mathcal{T}_2) $ is positive or $\frac{2\pi^2}{r}$ when $TV_{r,2}(M_2,\mathcal{T}_2) $ is negative, which converges to $0$ as $r \to \infty$. Therefore to test the convergence of $QV_{r,2}(M_2)$ we can forget about imaginary parts and consider only the real parts.

We compute the function \textbf{quantumvirog2(r,s)} for $s=2$ and increasing values of $r$, and we obtain the table of values of $\mathcal{R}\left(QV_{r,2}(M_2)\right)$ shown in Figure \ref{fig:table:qviro2-7}. We use the numerical approximation \textbf{.n()} with the best available precision (around \textbf{prec = 180}). 
%As one can guess looking at Figure \ref{fig:table:qviro2} (and check in practice), increasing the \textbf{prec} after 180 will only add zeroes to the computed values.

\begin{lstlisting}[language=Python, caption={Numerical approximation of $\mathcal{R}\left(QV_{r,2}(M_2)\right)$}]
quantumvirog2(r,2).real().n(prec=180)
\end{lstlisting}

Similarly, we use the functions \textbf{quantumvirog3(r,s)}, \ldots, \textbf{quantumvirog7(r,s)} for $s=2$ and increasing values of $r$, and we display the  values of $\mathcal{R}\left(QV_{r,2}(M_g)\right)$   in Figure \ref{fig:table:qviro2-7}.

As we will detail in Section \ref{sec:res}, we obtain surprising values for high $r$ and $g=2, 3$, probably due to numerical errors. Those values are written in red in Figure \ref{fig:table:qviro2-7}.

\begin{figure}[!h]
	\centering
	\begin{tabular}{ccc}
		\begin{tabular}{|c|c|c|}
			\hline 
			$r$ & $\mathcal{R}\left(QV_{r,2}(M_2)\right)$  \\ 
			\hline 
			5 &   8.14385123663626   \\ 
			\hline  7 &   9.18650442759997    \\ 
			\hline  9 &   9.65004427173429   \\ 
			\hline   11&  9.96879239401443    \\  
			\hline   13& 10.20513879726808    \\ 
			\hline 15  & 10.38914324592799  \\ 
			\hline   17& 10.53704472005768   \\ 
			\hline  19 & 10.65879117905018    \\  
			\hline 21 &  10.76091340012164    \\ 
			\hline  23 & 10.84790597624064\\ 
			\hline 25  & 10.92297357052110  \\  
			\hline 27  & 10.98846715752597  \\  
			\hline   29& 11.04614827534519    \\ 
			\hline   31& 11.09819658700029 \\  
			\hline  33 & 11.10744853337351    \\ 
			\hline 35  & \textcolor{red}{10.85076510281595} \\ 
			\hline   37& \textcolor{red}{11.70823932238226}   \\ 
			\hline 39  & \textcolor{red}{12.05034471052339}  \\ 
			\hline  41 & \textcolor{red}{12.57984278565481}  \\ 
			\hline  43 & \textcolor{red}{13.01497045469742}   \\ 
			\hline  45 & \textcolor{red}{13.57883304172589}  \\ 
			\hline  47 & \textcolor{red}{13.99851452347661}   \\ 
			\hline 
		\end{tabular}  & \hspace*{-0.25cm}
		\begin{tabular}{|c|c|c|}
			\hline 
			$r$ & $\mathcal{R}\left(QV_{r,2}(M_3)\right)$  \\ 
			\hline 5 &  11.49177317419101  \\ 
			\hline 7 &  12.80934693191113  \\ 
			\hline 9 &  13.58615197340893  \\ 
			\hline 11 & 14.12955507845825  \\ 
			\hline 13 & 14.53997951590672 \\ 
			\hline 15 & 14.86388896169300  \\ 
			\hline 17 & 15.12724763049115  \\ 
			\hline 19 & 15.34618602238218 \\ 
			\hline 21 & 15.53141775410042   \\ 
			\hline 23 & 15.69039789582600   \\ 
			\hline 25 & 15.82849506550996   \\ 
			\hline 27 & 15.94972272572273  \\ 
			\hline 29 & 16.05847664488577   \\ 
			\hline 31 & 16.12064941438458   \\ 
			\hline 33 & \textcolor{red}{16.64108419344305}   \\ 
			\hline 35 & \textcolor{red}{17.23677472848113}   \\ 
			\hline 37 & \textcolor{red}{17.65793100469928}   \\ 
			\hline 39 & \textcolor{red}{18.19438875927008}  \\ 
			\hline
			\addlinespace[11.5ex]
		\end{tabular}  & \hspace*{-0.25cm}
		\begin{tabular}{|c|c|c|}
			\hline 
			$r$ & $\mathcal{R}\left(QV_{r,2}(M_4)\right)$  \\ 
			\hline 	5 & 
			14.51784517894469  \\ \hline 7 & 16.30280237431099  \\ \hline 9 & 17.32714285662395  \\ \hline 11 & 18.05414567452926  \\ \hline 13 & 18.60945703261760  \\ \hline 15 & 19.05151621992931  \\ \hline 17 & 19.41350816169271  \\ \hline 19 & 19.71628402919349  \\ \hline 21 & 19.97380655712918  \\ \hline 23 & 20.19586182173212  \\ \hline 25 & 20.38962564202214  \\ \hline 27 & 20.54717170623221  \\
			\hline
			\addlinespace[28.7ex]
		\end{tabular} \\
		\addlinespace[2ex]
		\begin{tabular}{|c|c|c|}
			\hline 
			$r$ & $\mathcal{R}\left(QV_{r,2}(M_5)\right)$  \\ 
			\hline 
			5 & 
			17.56864290428003  \\ \hline 7 & 19.74442367439225  \\ \hline 9 & 20.99442151342528  \\ \hline 11 & 21.88836919170208  \\ \hline 13 & 22.57622952582667  \\ \hline 15 & 23.12700521166837  \\ \hline 17 & 23.58015181610567  \\ \hline 19 & 23.96067740594393  \\ \hline 21 & 24.28544874705841  \\ \hline 23 & 24.56622464869820 \\
			\hline
		\end{tabular} 
		&\hspace*{-0.25cm}
		\begin{tabular}{|c|c|c|}
			\hline 
			$r$ & $\mathcal{R}\left(QV_{r,2}(M_6)\right)$  \\ 
			\hline 
			5 & 
			20.59635740610918  \\ \hline 7 & 23.16334886690935  \\ \hline 9 & 24.62826235095652  \\ \hline 11 & 25.68044858255137  \\ \hline 13 & 26.49408736663125  \\ \hline 15 & 27.14829604792329  \\ \hline 17 & 27.68837084809290  \\ \hline 19 & 28.14316996246829  \\ \hline 21 & 28.53221301857429  \\ \hline 23 & 28.85466729936771 \\
			\hline
		\end{tabular}
		&\hspace*{-0.25cm}
		\begin{tabular}{|c|c|c|}
			\hline 
			$r$ & $\mathcal{R}\left(QV_{r,2}(M_7)\right)$  \\ 
			\hline 
			5 & 
			23.62294303366446  \\ \hline 7 & 26.57176683519978  \\ \hline 9 & 28.24541308192440  \\ \hline 11 & 29.45065948405797  \\ \hline 13 & 30.38589828885670  \\ \hline 15 & 31.14019388548824  \\ \hline 17 & 31.76448809338449  \\ \hline 19 & 32.29128792277911  \\
			\hline
			\addlinespace[5.9ex]
		\end{tabular}
	\end{tabular}
	\caption{Values of $\mathcal{R}\left(QV_{r,2}(M_g)\right)$ for $2 \leqslant g \leqslant 7$ and $r \geqslant 5$ (assumed numerical errors are written in \textcolor{red}{red}).}\label{fig:table:qviro2-7}
\end{figure}

%
%\begin{figure}[!h]
%	\centering
%	\begin{tabular}{|c|c|c|}
%	\hline 
%	$r$ & $\mathcal{R}\left(QV_{r,2}(M_3)\right)$  \\ 
%	\hline 5 &  11.491773174191012  \\ 
%	\hline 7 &  12.809346931911131  \\ 
%	\hline 9 &  13.586151973408931  \\ 
%	\hline 11 & 14.129555078458251  \\ 
%	\hline 13 & 14.539979515906729 \\ 
%	\hline 15 & 14.863888961693005  \\ 
%	\hline 17 & 15.127247630491153  \\ 
%	\hline 19 & 15.346186022382187 \\ 
%	\hline 21 & 15.531417754100420   \\ 
%	\hline 23 & 15.690397895826002   \\ 
%	\hline 25 & 15.828495065509965   \\ 
%	\hline 27 & 15.949722725722734  \\ 
%	\hline 29 & 16.058476644885779   \\ 
%	\hline 31 & 16.120649414384580   \\ 
%	\hline 33 & \textcolor{red}{16.641084193443052}   \\ 
%	\hline 35 & \textcolor{red}{17.236774728481130}   \\ 
%	\hline 37 & \textcolor{red}{17.657931004699285}   \\ 
%	\hline 39 & \textcolor{red}{18.194388759270086}  \\ 
%	\hline 
%\end{tabular} 
%
%\caption{Values of $\mathcal{R}\left(QV_{r,2}(M_3)\right)$ for $r \leqslant 39$.}\label{fig:table:qviro3}
%\end{figure}	

In the following code, we look for the best interpolation of our data for the values $\mathcal{R}\left(QV_{r,2}(M_2)\right)$, when $5 \leqslant r \leqslant 33$. We look for a model of the form 
$a+ b\frac{\ln(r-2)}{r-2} + c\frac{1}{r-2}$ (as in Conjecture \ref{conj:vol:bc}). We use the function \textbf{find\_fit}.

\begin{lstlisting}[language=Python, caption={3-term interpolation of $\mathcal{R}\left(QV_{r,2}(M_2)\right)$ for $5 \leqslant r \leqslant 33$.}]
data=[(2*i+1,quantumvirog2(2*i+1,2).real().n(prec=180)) for i in range(2,15)]

var('a, b, c, r')
model(r)= a+ b*2*pi*ln(r-2)/(r-2) + c/(r-2)

sol = find_fit(data,model)
show(sol)
\end{lstlisting}

We find the values
$$a = 11.86209740389381, b = -0.835561949347834, c = -5.310168450722084,$$
and in particular a constant term $a$ equal to the expected hyperbolic volume up to
$$ \dfrac{12.046092040094381-11.86209740389381}{12.046092040094381} \approx 1.5 \%.$$

We then do the same for $M_3$, in the following code.

\begin{lstlisting}[language=Python, caption={3-term interpolation of $\mathcal{R}\left(QV_{r,2}(M_3)\right)$ for $5 \leqslant r \leqslant 31$.}]
data=[(2*i+1,quantumvirog3(2*i+1,2).real().n(prec=180)) for i in range(2,14)]


var('a, b, c, r')
model(r)= a+ b*2*pi*ln(r-2)/(r-2) + c/(r-2)

sol = find_fit(data,model)
show(sol)
\end{lstlisting}

We find the values
$$a = 17.712568980467715, b = -1.95506206171866, c = -5.092760978446523,$$
and in particular a constant term $a$ equal to the expected hyperbolic volume up to
$$ \dfrac{18.03810545488482-17.712568980467715}{18.03810545488482} \approx 1.8 \%.$$

For each $g \in \{4, \ldots 7\}$, we interpolate all available values of $\mathcal{R}\left(QV_{r,2}(M_g)\right)$ with the same model (since no numerical strangeness occur in these cases). The values for $a,b,c$ are listed in Figure \ref{fig:table:abc}.

\begin{figure}[!h]
	\centering
 \begin{tabular}{|c|c|c|c|c|c|c|}
 	\hline 
$g$ & $r_{max}$ & $\Vol(M_g)$ &  $a$ & $b$ & $c$ & $\frac{\Vol(M_g)-a}{\Vol(M_g)}$\\
 	\hline 
2 & 33 & 
12.04609204 & 
11.86209740 & 
-0.83556194 & 
-5.31016845 & $\approx 1.5 \%$ \\
\hline 
3 & 31 & 
18.03810545 & 
17.71256898 &
-1.95506206 &
-5.09276097
& $\approx 1.8 \%$ \\
\hline 
4 & 27 & 
23.60349490 & 
22.91592390 &
-2.65679563 &
-6.74587906
& $\approx 2.9 \%$ \\
\hline 
5 & 23 & 
28.98945539 & 
27.83557719 &
-3.23491649 &
-8.35921398
& $\approx 3.9 \%$ \\
\hline 
6 & 23 & 
34.28064479 & 
32.73892860 &
-3.85245863 &
-9.69525194
& $\approx 4.5 \%$ \\
\hline 
7 & 19 & 
39.51512785 & 
37.25645299 &
-4.15342419 &
-12.1205935
& $\approx 5.7 \%$ \\
\hline 
 \end{tabular}
	\caption{Values of the interpolating coefficients $a,b,c$ for the model $a+ b \cdot \frac{2\pi \ln(r-2)}{r-2} + c\frac{1}{r-2}$ for $\mathcal{R}\left(QV_{r,2}(M_g)\right)$, with $5\leqslant r \leqslant r_{max}$.}\label{fig:table:abc}
\end{figure}

\section{Numerical Results}\label{sec:res}

\subsection{The case of $M_2$}\label{sec:M2}

Let us now state the relevant structures and invariants of $M_2$, which will then allow us to numerically check the volume conjecture for this manifold.

\subsubsection{Triangulation}

Figure \ref{fig:T2} displays the ideal triangulation $\mathcal{T}_g$ of $M_g$ in the case $g=2$ (note that some gluing information is not directly stated in the picture for clarity). The $0$-skeleton $(\mathcal{T}_2)^{0,\sim}$ has two elements $\nu_1$ (corresponding to the toroidal boundary component) and $\nu_2$ (corresponding to the boundary component of genus $2$). The $1$-skeleton $(\mathcal{T}_2)^{1,\sim}$ contains five classes $\eta_1, \ldots , \eta_5$.

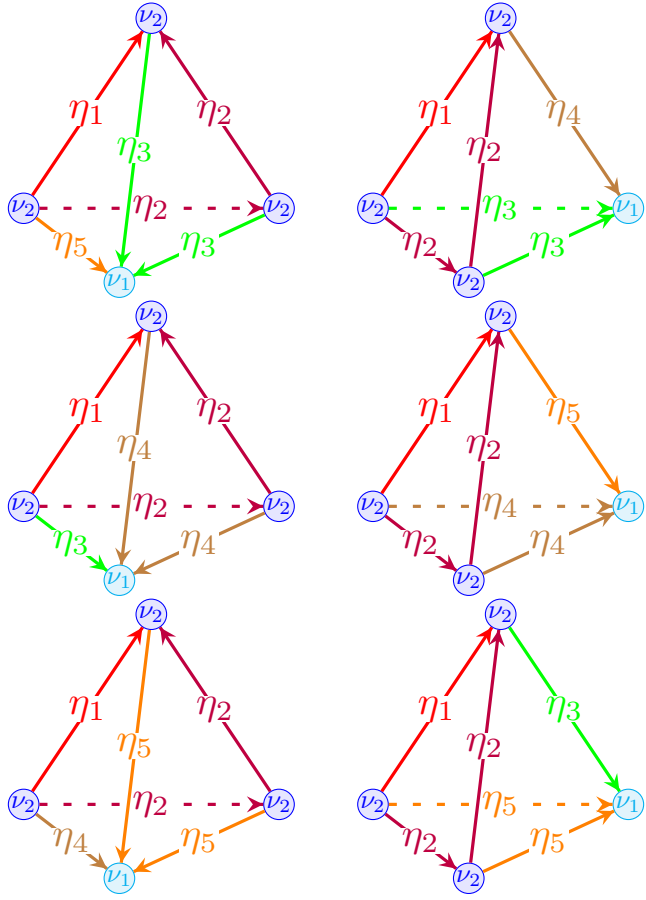
\begin{figure}[!h]
	\centering
	\begin{tikzpicture}[scale=0.7,inner sep=0.05mm, centered]
	\tikzstyle{front}=[very thick,decoration={markings,mark=at position 1 with
		{\arrow[scale=1.2,>=stealth]{>}}},postaction={decorate}]
	\tikzstyle{back}=[loosely dashed,very thick, decoration={markings,mark=at position 1 with
		{\arrow[scale=1.2,>=stealth]{>}}},postaction={decorate}]

	\node[two-hole] (a) at (-2.4,0) {$\nu_2$};
	\node[two-hole] (b) at (0,3.6) {$\nu_2$};
	\node[tore] (c) at (-0.6,-1.4) {$\nu_1$};
	\node[two-hole] (d) at (2.4,0) {$\nu_2$};
	
	\draw[front,red] (a) -- (b) node[labels] { $\color{red} \eta_1$};
	\draw[front,orange] (a) -- (c) node[labels] { $\color{orange} \eta_5$};
	\draw[back,purple] (a) -- (d) node[labels] { $\color{purple} \eta_2$};
	\draw[front,green] (b) -- (c) node[labels] { $\color{green} \eta_3$};
	\draw[front,purple] (d) -- (b) node[labels] { $\color{purple} \eta_2$};
	\draw[front,green] (d) -- (c) node[labels] { $\color{green} \eta_3$};
	
	\end{tikzpicture}
	\qquad
	\begin{tikzpicture}[scale=0.7,inner sep=0.05mm, centered]
	\tikzstyle{front}=[very thick,decoration={markings,mark=at position 1 with
		{\arrow[scale=1.2,>=stealth]{>}}},postaction={decorate}]
	\tikzstyle{back}=[loosely dashed,very thick, decoration={markings,mark=at position 1 with
		{\arrow[scale=1.2,>=stealth]{>}}},postaction={decorate}]
	
	\node[two-hole] (a) at (-2.4,0) {$\nu_2$};
	\node[two-hole] (b) at (0,3.6) {$\nu_2$};
	\node[two-hole] (c) at (-0.6,-1.4) {$\nu_2$};
	\node[tore] (d) at (2.4,0) {$\nu_1$};
	
	\draw[front,red] (a) -- (b) node[labels] { $\color{red} \eta_1$};
	\draw[front,purple] (a) -- (c) node[labels] { $\color{purple} \eta_2$};
	\draw[back,green] (a) -- (d) node[labels] { $\color{green} \eta_3$};
	\draw[front,purple] (c) -- (b) node[labels] { $\color{purple} \eta_2$};
	\draw[front,brown] (b) -- (d) node[labels] { $\color{brown} \eta_4$};
	\draw[front,green] (c) -- (d) node[labels] { $\color{green} \eta_3$};
	
	\end{tikzpicture}
	
	\begin{tikzpicture}[scale=0.7,inner sep=0.05mm, centered]
	\tikzstyle{front}=[very thick,decoration={markings,mark=at position 1 with
		{\arrow[scale=1.2,>=stealth]{>}}},postaction={decorate}]
	\tikzstyle{back}=[loosely dashed,very thick, decoration={markings,mark=at position 1 with
		{\arrow[scale=1.2,>=stealth]{>}}},postaction={decorate}]
	
	\node[two-hole] (a) at (-2.4,0) {$\nu_2$};
	\node[two-hole] (b) at (0,3.6) {$\nu_2$};
	\node[tore] (c) at (-0.6,-1.4) {$\nu_1$};
	\node[two-hole] (d) at (2.4,0) {$\nu_2$};
	
	\draw[front,red] (a) -- (b) node[labels] { $\color{red} \eta_1$};
	\draw[front,green] (a) -- (c) node[labels] { $\color{green} \eta_3$};
	\draw[back,purple] (a) -- (d) node[labels] { $\color{purple} \eta_2$};
	\draw[front,brown] (b) -- (c) node[labels] { $\color{brown} \eta_4$};
	\draw[front,purple] (d) -- (b) node[labels] { $\color{purple} \eta_2$};
	\draw[front,brown] (d) -- (c) node[labels] { $\color{brown} \eta_4$};
	
	\end{tikzpicture}
	\qquad
	\begin{tikzpicture}[scale=0.7,inner sep=0.05mm, centered]
	\tikzstyle{front}=[very thick,decoration={markings,mark=at position 1 with
		{\arrow[scale=1.2,>=stealth]{>}}},postaction={decorate}]
	\tikzstyle{back}=[loosely dashed,very thick, decoration={markings,mark=at position 1 with
		{\arrow[scale=1.2,>=stealth]{>}}},postaction={decorate}]
	
	\node[two-hole] (a) at (-2.4,0) {$\nu_2$};
	\node[two-hole] (b) at (0,3.6) {$\nu_2$};
	\node[two-hole] (c) at (-0.6,-1.4) {$\nu_2$};
	\node[tore] (d) at (2.4,0) {$\nu_1$};
	
	\draw[front,red] (a) -- (b) node[labels] { $\color{red} \eta_1$};
	\draw[front,purple] (a) -- (c) node[labels] { $\color{purple} \eta_2$};
	\draw[back,brown] (a) -- (d) node[labels] { $\color{brown} \eta_4$};
	\draw[front,purple] (c) -- (b) node[labels] { $\color{purple} \eta_2$};
	\draw[front,orange] (b) -- (d) node[labels] { $\color{orange} \eta_5$};
	\draw[front,brown] (c) -- (d) node[labels] { $\color{brown} \eta_4$};
	
	\end{tikzpicture}
	
	\begin{tikzpicture}[scale=0.7,inner sep=0.05mm, centered]
	\tikzstyle{front}=[very thick,decoration={markings,mark=at position 1 with
		{\arrow[scale=1.2,>=stealth]{>}}},postaction={decorate}]
	\tikzstyle{back}=[loosely dashed,very thick, decoration={markings,mark=at position 1 with
		{\arrow[scale=1.2,>=stealth]{>}}},postaction={decorate}]

	\node[two-hole] (a) at (-2.4,0) {$\nu_2$};
	\node[two-hole] (b) at (0,3.6) {$\nu_2$};
	\node[tore] (c) at (-0.6,-1.4) {$\nu_1$};
	\node[two-hole] (d) at (2.4,0) {$\nu_2$};
	
	\draw[front,red] (a) -- (b) node[labels] { $\color{red} \eta_1$};
	\draw[front,brown] (a) -- (c) node[labels] { $\color{brown} \eta_4$};
	\draw[back,purple] (a) -- (d) node[labels] { $\color{purple} \eta_2$};
	\draw[front,orange] (b) -- (c) node[labels] { $\color{orange} \eta_5$};
	\draw[front,purple] (d) -- (b) node[labels] { $\color{purple} \eta_2$};
	\draw[front,orange] (d) -- (c) node[labels] { $\color{orange} \eta_5$};
	
	\end{tikzpicture}
	\qquad
	\begin{tikzpicture}[scale=0.7,inner sep=0.05mm, centered]
	\tikzstyle{front}=[very thick,decoration={markings,mark=at position 1 with
		{\arrow[scale=1.2,>=stealth]{>}}},postaction={decorate}]
	\tikzstyle{back}=[loosely dashed,very thick, decoration={markings,mark=at position 1 with
		{\arrow[scale=1.2,>=stealth]{>}}},postaction={decorate}]
	
	\node[two-hole] (a) at (-2.4,0) {$\nu_2$};
	\node[two-hole] (b) at (0,3.6) {$\nu_2$};
	\node[two-hole] (c) at (-0.6,-1.4) {$\nu_2$};
	\node[tore] (d) at (2.4,0) {$\nu_1$};
	
	\draw[front,red] (a) -- (b) node[labels] { $\color{red} \eta_1$};
	\draw[front,purple] (a) -- (c) node[labels] { $\color{purple} \eta_2$};
	\draw[back,orange] (a) -- (d) node[labels] { $\color{orange} \eta_5$};
	\draw[front,purple] (c) -- (b) node[labels] { $\color{purple} \eta_2$};
	\draw[front,green] (b) -- (d) node[labels] { $\color{green} \eta_3$};
	\draw[front,orange] (c) -- (d) node[labels] { $\color{orange} \eta_5$};
	
	\end{tikzpicture}

	\caption{The ordered ideal triangulation $\mathcal{T}_2$ of the 3-manifold $M_2$} \label{fig:T2}
\end{figure}

\subsubsection{Hyperbolic structure}

As a specific case of Section \ref{sec:Mg:hyp}, 
the unique complete hyperbolic structure on the manifold $M_2$ is given by the angles
$$\alpha_2=\frac{\pi}{6}, \ \ \beta_2=2 \alpha_2=\frac{\pi}{3}, \ \ \gamma_2= \arccos((2\cos\alpha_2)^{-1}),  \ \ \delta_2 = \pi - 2 \gamma_2,$$
and the hyperbolic volume of $M_2$ is computed (via the code of  Section \ref{sec:codehyperbolicvolumeg}) to be $\Vol(M_2)=12.046092040094381...$ (which corresponds to the value computed with \textit{Orb} in \cite{orb}).

\subsubsection{Admissible colorings and Turaev--Viro Invariants}

From Definition \ref{TuraevVirosum}, 
we compute the edge terms and tetrahedron terms 
contributing to $TV_{r,s}(M_2,\mathcal{T}_2)$. Since $\mathcal{T}_2$ has no regular vertices,  the regular vertices term is thus $N=\left(\sum_{i\in I_r}w_i^2\right)^{0}=1$.

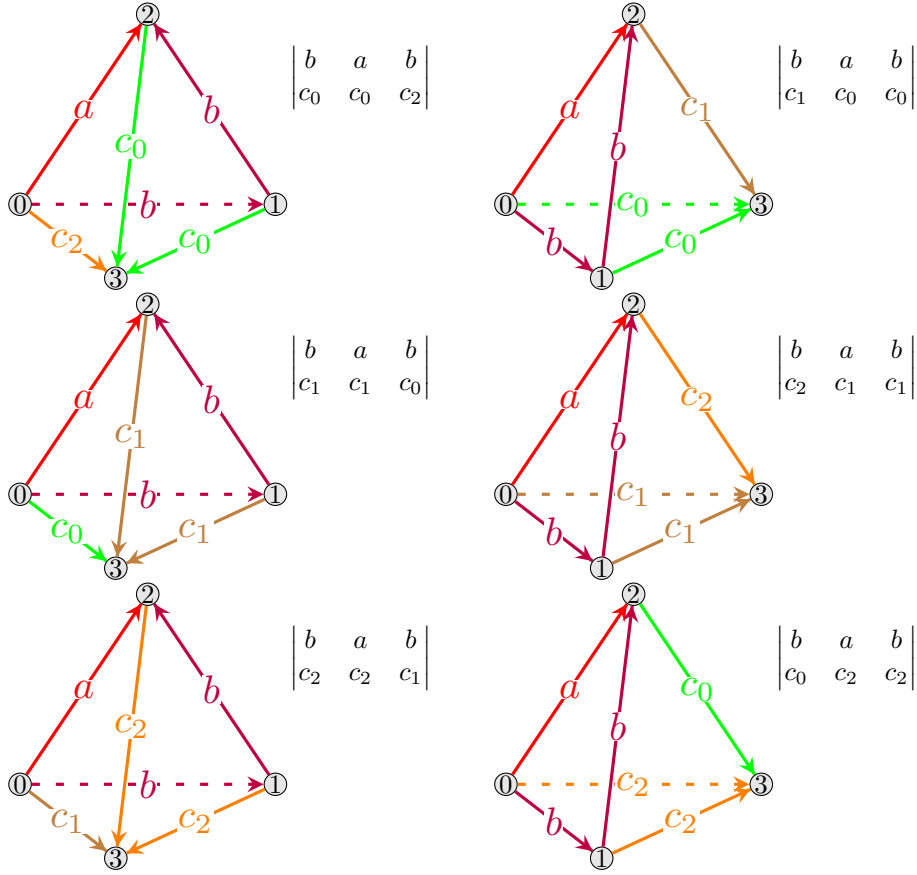
\begin{figure}[!h]
	\centering
	\begin{tikzpicture}[scale=0.7,inner sep=0.05mm, centered]

	\node[rod] (a) at (-2.4,0) {0};
	\node[rod] (b) at (0,3.6) {2};
	\node[rod] (c) at (-0.6,-1.4) {3};
	\node[rod] (d) at (2.4,0) {1};
	
	\draw[frontbis,red] (a) -- (b) node[labels] { $\color{red} a$};
	\draw[frontbis,orange] (a) -- (c) node[labels] { $\color{orange} c_2$};
	\draw[backbis,purple] (a) -- (d) node[labels] { $\color{purple} b$};
	\draw[frontbis,green] (b) -- (c) node[labels] { $\color{green} c_0$};
	\draw[frontbis,purple] (d) -- (b) node[labels] { $\color{purple} b$};
	\draw[frontbis,green] (d) -- (c) node[labels] { $\color{green} c_0$};
	
	\node (ab) at (4,2.4) {$\begin{vmatrix} b & a & b \\ c_0 & c_0 & c_2 \end{vmatrix} $};
	\end{tikzpicture}
	\qquad
	\begin{tikzpicture}[scale=0.7,inner sep=0.05mm, centered]
	\node[rod] (a) at (-2.4,0) {0};
	\node[rod] (b) at (0,3.6) {2};
	\node[rod] (c) at (-0.6,-1.4) {1};
	\node[rod] (d) at (2.4,0) {3};

	\draw[frontbis,red] (a) -- (b) node[labels] { $\color{red} a$};
	\draw[frontbis,purple] (a) -- (c) node[labels] { $\color{purple} b$};
	\draw[backbis,green] (a) -- (d) node[labels] { $\color{green} c_0$};
	\draw[frontbis,purple] (c) -- (b) node[labels] { $\color{purple} b$};
	\draw[frontbis,brown] (b) -- (d) node[labels] { $\color{brown} c_1$};
	\draw[frontbis,green] (c) -- (d) node[labels] { $\color{green} c_0$};
	
	\node (ab) at (4,2.4) {$\begin{vmatrix} b & a & b \\ c_1 & c_0 & c_0 \end{vmatrix} $};
	\end{tikzpicture}
	
	\begin{tikzpicture}[scale=0.7,inner sep=0.05mm, centered]

	\node[rod] (a) at (-2.4,0) {0};
	\node[rod] (b) at (0,3.6) {2};
	\node[rod] (c) at (-0.6,-1.4) {3};
	\node[rod] (d) at (2.4,0) {1};
	
	\draw[frontbis,red] (a) -- (b) node[labels] { $\color{red} a$};
	\draw[frontbis,green] (a) -- (c) node[labels] { $\color{green} c_0$};
	\draw[backbis,purple] (a) -- (d) node[labels] { $\color{purple} b$};
	\draw[frontbis,brown] (b) -- (c) node[labels] { $\color{brown} c_1$};
	\draw[frontbis,purple] (d) -- (b) node[labels] { $\color{purple} b$};
	\draw[frontbis,brown] (d) -- (c) node[labels] { $\color{brown} c_1$};
	
	\node (ab) at (4,2.4) {$\begin{vmatrix} b & a & b \\ c_1 & c_1 & c_0 \end{vmatrix}$};
	\end{tikzpicture}
	\qquad
	\begin{tikzpicture}[scale=0.7,inner sep=0.05mm, centered]
	\node[rod] (a) at (-2.4,0) {0};
	\node[rod] (b) at (0,3.6) {2};
	\node[rod] (c) at (-0.6,-1.4) {1};
	\node[rod] (d) at (2.4,0) {3};
	
	\draw[frontbis,red] (a) -- (b) node[labels] { $\color{red} a$};
	\draw[frontbis,purple] (a) -- (c) node[labels] { $\color{purple} b$};
	\draw[backbis,brown] (a) -- (d) node[labels] { $\color{brown} c_1$};
	\draw[frontbis,purple] (c) -- (b) node[labels] { $\color{purple} b$};
	\draw[frontbis,orange] (b) -- (d) node[labels] { $\color{orange} c_2$};
	\draw[frontbis,brown] (c) -- (d) node[labels] { $\color{brown} c_1$};
	
	\node (ab) at (4,2.4) {$\begin{vmatrix} b & a & b \\ c_2 & c_1 & c_1 \end{vmatrix} $};
	\end{tikzpicture}
	
	\begin{tikzpicture}[scale=0.7,inner sep=0.05mm, centered]

	\node[rod] (a) at (-2.4,0) {0};
	\node[rod] (b) at (0,3.6) {2};
	\node[rod] (c) at (-0.6,-1.4) {3};
	\node[rod] (d) at (2.4,0) {1};
	
	\draw[frontbis,red] (a) -- (b) node[labels] { $\color{red} a$};
	\draw[frontbis,brown] (a) -- (c) node[labels] { $\color{brown} c_1$};
	\draw[backbis,purple] (a) -- (d) node[labels] { $\color{purple} b$};
	\draw[frontbis,orange] (b) -- (c) node[labels] { $\color{orange} c_2$};
	\draw[frontbis,purple] (d) -- (b) node[labels] { $\color{purple} b$};
	\draw[frontbis,orange] (d) -- (c) node[labels] { $\color{orange} c_2$};
	
	\node (ab) at (4,2.4) {$\begin{vmatrix} b & a & b \\ c_2 & c_2 & c_1 \end{vmatrix} $};
	
	\end{tikzpicture}
	\qquad
	\begin{tikzpicture}[scale=0.7,inner sep=0.05mm, centered]
	\node[rod] (a) at (-2.4,0) {0};
	\node[rod] (b) at (0,3.6) {2};
	\node[rod] (c) at (-0.6,-1.4) {1};
	\node[rod] (d) at (2.4,0) {3};

	\draw[frontbis,red] (a) -- (b) node[labels] { $\color{red} a$};
	\draw[frontbis,purple] (a) -- (c) node[labels] { $\color{purple} b$};
	\draw[backbis,orange] (a) -- (d) node[labels] { $\color{orange} c_2$};
	\draw[frontbis,purple] (c) -- (b) node[labels] { $\color{purple} b$};
	\draw[frontbis,green] (b) -- (d) node[labels] { $\color{green} c_0$};
	\draw[frontbis,orange] (c) -- (d) node[labels] { $\color{orange} c_2$};
	
	\node (ab) at (4,2.4) {$\begin{vmatrix} b & a & b \\ c_0 & c_2 & c_2 \end{vmatrix} $};
	\end{tikzpicture}

	\caption{The coloring $c$ of the ideal tetrahedra of $\mathcal{T}_2$ together with their respective tetrahedron terms}\label{fig:coloring2}
\end{figure}

Let $(a,b,c_0,c_1,c_2)$ be a quintuple of elements of $I_r$. A coloring $c:X^1_{\sim} \to I_r$ such as in Figure \ref{fig:coloring2} is admissible if and only if it satisfies the conditions of Theorem \ref{thm:allowedstates}, thus

$$\mathcal{A}_r(M_2,\mathcal{T}_2)=\left\{ 
\begin{pmatrix}
a\\
b\\
c_0\\
c_1\\
c_2
\end{pmatrix}
\in I_r^5
\;
\begin{tabular}{|l}
$a,b \in \mathbb{N}$, \\
$\frac{a}{2} \leq b \leq \frac{r-2-a}{2}$, \\
either $c_0,c_1,c_2 \in \mathbb{N}$ or $c_0,c_1,c_2 \in \frac{\mathbb{N}_{odd}}{2}$, \\
$\max\left(\frac{b}{2},a - c_3,c_3- \min (a,b)\right) \leq c_{0}$, \\
$c_{0} \leq \min\left(\frac{r-2-b}{2},r - 2 - a - c_3, \min (a,b) + c_3\right)$,\\

$\max\left(\frac{b}{2},a - c_0,c_0- \min (a,b)\right) \leq c_{1}$, \\
$c_{1} \leq \min\left(\frac{r-2-b}{2},r - 2 - a - c_0, \min (a,b) + c_0\right)$,\\
$\max\left(\frac{b}{2},a - c_1,c_1- \min (a,b)\right) \leq c_{2}$, \\
$c_{2} \leq \min\left(\frac{r-2-b}{2},r - 2 - a - c_1, \min (a,b) + c_1\right)$.
\end{tabular}
\right\}.$$

From Figure \ref{fig:coloring2},
we can determine the six tetrahedron terms $| T |_c$ and five edge terms $|\eta|_c$ associated to the coloring $c$ which gives us the following equation:
\begin{align*}TV_{r,s}(M_2,\mathcal{T}_2)=\sum_{(a,b,c_0,c_1,c_2)\in \mathcal{A}_r(M_2,\mathcal{T}_2)}w_aw_bw_{c_0}w_{c_1}w_{c_2} \cdot &
\begin{vmatrix} 
b & a & b \\ 
c_0 & c_0 & c_2 
\end{vmatrix}
\begin{vmatrix} 
b & a & b \\ 
c_1 & c_0 & c_0 
\end{vmatrix} \\
\cdot &
\begin{vmatrix} 
b & a & b \\ 
c_1 & c_1 & c_0 
\end{vmatrix}
\begin{vmatrix} 
b & a & b \\ 
c_2 & c_1 & c_1 
\end{vmatrix}\\
\cdot & \begin{vmatrix} 
b & a & b \\ 
c_2 & c_2 & c_1 
\end{vmatrix} \begin{vmatrix} 
b & a & b \\ 
c_0 & c_2 & c_2 
\end{vmatrix}
.
\end{align*}

Using Proposition \ref{prop:allowedperm} and a bit of reordering, we can rewrite the summation as:
\begin{align*}TV_{r,s}(M_2,\mathcal{T}_2)=\sum_{(a,b,c_0,c_1,c_2)\in \mathcal{A}_r(M_2,\mathcal{T}_2)}w_aw_bw_{c_0}w_{c_1}w_{c_2} \cdot &
\begin{vmatrix} 
a & b & b \\ 
c_0 & c_0 & c_2 
\end{vmatrix}
\begin{vmatrix} 
a & b & b \\ 
c_0 & c_0 & c_1 
\end{vmatrix}\\
\cdot & \begin{vmatrix} 
a & b & b \\ 
c_1 & c_1 & c_0 
\end{vmatrix} \begin{vmatrix} 
a & b & b \\ 
c_1 & c_1 & c_2 
\end{vmatrix}\\
\cdot &
\begin{vmatrix} 
a & b & b \\ 
c_2 & c_2 & c_1 
\end{vmatrix}
\begin{vmatrix} 
a & b & b \\ 
c_2 & c_2 & c_0 
\end{vmatrix}
.
\end{align*}\label{TuraevVirog2}

\subsubsection{Numerical check of the volume conjecture}\label{sec:num:disc:M2}

\begin{figure}[!h]
	\centering
	\begin{tikzpicture}[scale=1.4]
	\begin{axis}[axis x line=bottom,axis y line = left, 
	ymin=7,ymax=15,xmin=0,
				xlabel={$r$}, 	xlabel style={at={(ticklabel* cs:1)},anchor=south west},
	legend style={
		at={(0,0)},
		anchor=north west,at={(axis description cs:0.1,-0.1)}}]
		\addplot[only marks, color=cyan, mark=*] coordinates { 
(5, 8.1438512366362676431208456051535904407501220703125000)
(7, 9.1865044275999743206284620100632309913635253906250000)
(9, 9.6500442717342984622064250288531184196472167968750000)
(11, 9.9687923940144393952778045786544680595397949218750000)
(13, 10.205138797268084260849718702957034111022949218750000)
(15, 10.389143245927991543453572376165539026260375976562500)
(17, 10.537044720057689772829689900390803813934326171875000)
(19, 10.658791179050181696652543905656784772872924804687500)
(21, 10.760913400121646432694433315191417932510375976562500)
(23, 10.847905976240641123808927659410983324050903320312500)
(25, 10.922973570521106623232299170922487974166870117187500)
(27, 10.988467157525978024068535887636244297027587890625000)
(29, 11.046148275345199607500035199336707592010498046875000)
(31, 11.098196587000291657432171632535755634307861328125000)
(33, 11.107448533373510457522570504806935787200927734375000)
(35, 10.850765102815952189985182485543191432952880859375000)
(37, 11.708239322382269165245816111564636230468750000000000)
(39, 12.050344710523390290290990378707647323608398437500000)
(41, 12.579842785654816950113854545634239912033081054687500)
(43, 13.014970454697428081658472365234047174453735351562500)
(45, 13.578833041725896890739022637717425823211669921875000)
(47, 13.998514523476618620634326362051069736480712890625000)
};
	\addlegendentry{$\mathcal{R}\left(QV_{r,2}(M_2)\right)$}
	\addplot[mark=none, color=blue, dashed, thick, samples=2,domain=0:50] {12.046092040094381};
	\addlegendentry{Vol$(M_2)=12.046092...$}
	\addplot[mark=none, color=blue]	expression[domain=1:50]
	{ 	11.86209740389381 - 5.249990563380644 * ln(x-2)/(x-2) - 5.310168450722084 /(x-2) };
	\addlegendentry{$11.86 - 5.25 \frac{\ln(r-2)}{r-2} - 5.31 \frac{1}{r-2}$ }
	\end{axis}
	\end{tikzpicture}
	\caption{Graph of the values of $\mathcal{R}\left(QV_{r,2}(M_2)\right)$ (for $5 \leqslant r \leqslant 47$, blue dots) compared with the hyperbolic volume $\Vol(M_2)$ (dashed and blue) and an interpolation of $\mathcal{R}\left(QV_{r,2}(M_2)\right)$  for $r \leqslant 33$ (blue curve).}\label{fig:asymptotics2}
\end{figure}
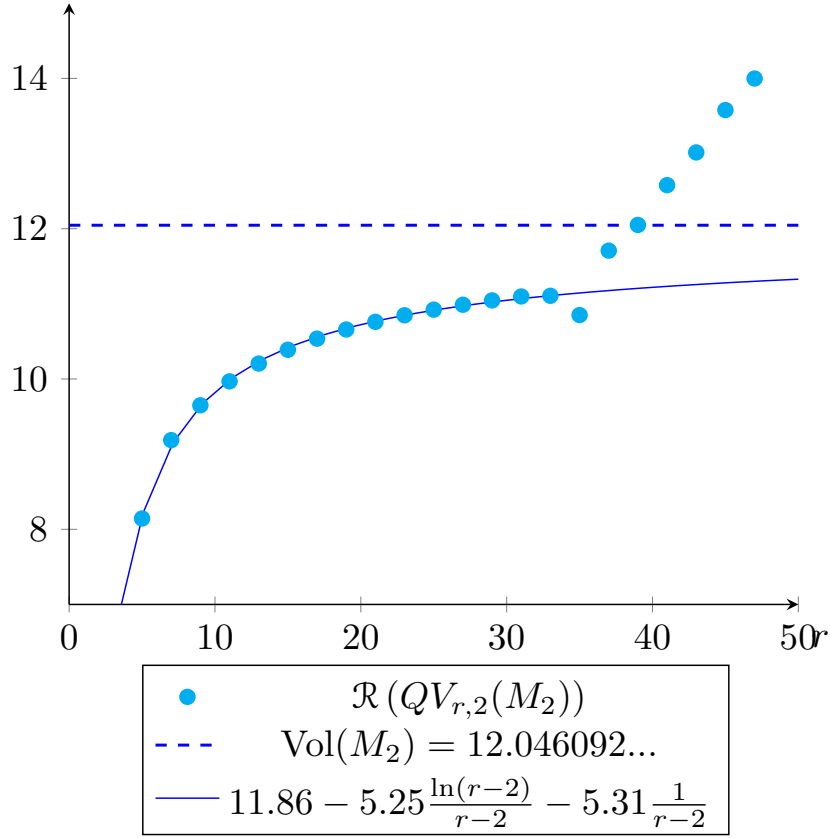

Recall that we compute and study the behavior of $ QV_{r,2}(M) := \frac{2 \pi }{r-2} \log \left( TV_{r,2} (M) \right)$.
Figure \ref{fig:asymptotics2} displays the values of $\mathcal{R}\left(QV_{r,2}(M_2)\right)$ for $5 \leqslant r \leqslant 47$ (blue dots), computed with maximal available precision via the code of Section \ref{sec:codetvr}. 

For $r \leqslant 33$, we observe the expected convergence to the hyperbolic volume $\Vol(M_2)$ (displayed with the blue dashed line); more precisely, when we fit the data for $5\leqslant r \leqslant 33$ with the model 
$a +b \frac{\ln(r-2)}{r-2} +c \frac{1}{r-2}$ (see Section \ref{sec:code:asymp}), we find a constant term which is very close to $\Vol(M_2)$ (up to $1.5 \%$), and the interpolating function found by SageMath (displayed in a full blue curve) appears to be  close to the blue dots.

However, a strange behavior starts at $r=35$, and the values of $\mathcal{R}\left(QV_{r,2}(M_2)\right)$  break the expected pattern. We offer a possible explanation that is still compatible with  Conjecture \ref{conj:vol}: since the Turaev--Viro invariants are computed as sums of a large set of terms (exponentially many in $r$) which may have vastly different orders of magnitude, the numerical approximations of the computer might truncate away subtle parts of the terms in the sum, which translates to larger and larger errors in the final results.

To test this theory, we computed the same values with smaller precision, on the same machine, expecting to find different values for high $r$. However, it was not the case. Nevertheless, we observed that a \textit{different computer} yielded slightly different numerical values than the one stated in this paper, the difference increasing as $r$ grew larger. 

Such numerical errors for large $r$ seem unavoidable when computing Turaev--Viro invariants, especially for triangulations with a large number of tetrahedra. One can for instance compare the maximum values of $r$ that were studied for triangulations with less than $4$ tetrahedra in \cite{chen2018volume}, and the ones in this paper, where $M_2$ has $6$ tetrahedra, $M_3$ has $8$ tetrahedra, and so on.

If we restrict to the values for $r \leqslant 33$, Conjectures \ref{conjecture} and \ref{conj:vol:bc} seem satisfied for $M_2$, as we observe the expected asymptotic behavior with small deviations.

\subsection{The case of $M_3$}\label{sec:M3}

\subsubsection{Triangulation}

Figure \ref{fig:T3} displays the ideal triangulation $\mathcal{T}_g$ of $M_g$ in the case $g=3$. The $0$-skeleton $(\mathcal{T}_3)^{0,\sim}$ has two elements $\nu_1$ (corresponding to the toroidal boundary component) and $\nu_2$ (corresponding to the boundary component of genus $3$). The $1$-skeleton $(\mathcal{T}_3)^{1,\sim}$ contains six classes $\eta_1, \ldots , \eta_6$.

\begin{figure}[!h]
	\centering
	\begin{tikzpicture}[scale=0.7,inner sep=0.05mm, centered]
	\tikzstyle{front}=[very thick,decoration={markings,mark=at position 1 with
		{\arrow[scale=1.2,>=stealth]{>}}},postaction={decorate}]
	\tikzstyle{back}=[loosely dashed,very thick, decoration={markings,mark=at position 1 with
		{\arrow[scale=1.2,>=stealth]{>}}},postaction={decorate}]

	\node[two-hole] (a) at (-2.4,0) {$\nu_2$};
	\node[two-hole] (b) at (0,3.6) {$\nu_2$};
	\node[tore] (c) at (-0.6,-1.4) {$\nu_1$};
	\node[two-hole] (d) at (2.4,0) {$\nu_2$};
	
	\draw[front,red] (a) -- (b) node[labels] { $\color{red} \eta_1$};
	\draw[front,blue] (a) -- (c) node[labels] { $\color{blue} \eta_6$};
	\draw[back,purple] (a) -- (d) node[labels] { $\color{purple} \eta_2$};
	\draw[front,green] (b) -- (c) node[labels] { $\color{green} \eta_3$};
	\draw[front,purple] (d) -- (b) node[labels] { $\color{purple} \eta_2$};
	\draw[front,green] (d) -- (c) node[labels] { $\color{green} \eta_3$};
	
	\end{tikzpicture}
	\qquad
	\begin{tikzpicture}[scale=0.7,inner sep=0.05mm, centered]
	\tikzstyle{front}=[very thick,decoration={markings,mark=at position 1 with
		{\arrow[scale=1.2,>=stealth]{>}}},postaction={decorate}]
	\tikzstyle{back}=[loosely dashed,very thick, decoration={markings,mark=at position 1 with
		{\arrow[scale=1.2,>=stealth]{>}}},postaction={decorate}]
	
	\node[two-hole] (a) at (-2.4,0) {$\nu_2$};
	\node[two-hole] (b) at (0,3.6) {$\nu_2$};
	\node[two-hole] (c) at (-0.6,-1.4) {$\nu_2$};
	\node[tore] (d) at (2.4,0) {$\nu_1$};
	
	\draw[front,red] (a) -- (b) node[labels] { $\color{red} \eta_1$};
	\draw[front,purple] (a) -- (c) node[labels] { $\color{purple} \eta_2$};
	\draw[back,green] (a) -- (d) node[labels] { $\color{green} \eta_3$};
	\draw[front,purple] (c) -- (b) node[labels] { $\color{purple} \eta_2$};
	\draw[front,brown] (b) -- (d) node[labels] { $\color{brown} \eta_4$};
	\draw[front,green] (c) -- (d) node[labels] { $\color{green} \eta_3$};
	
	\end{tikzpicture}
	
	\begin{tikzpicture}[scale=0.7,inner sep=0.05mm, centered]
	\tikzstyle{front}=[very thick,decoration={markings,mark=at position 1 with
		{\arrow[scale=1.2,>=stealth]{>}}},postaction={decorate}]
	\tikzstyle{back}=[loosely dashed,very thick, decoration={markings,mark=at position 1 with
		{\arrow[scale=1.2,>=stealth]{>}}},postaction={decorate}]
	
	\node[two-hole] (a) at (-2.4,0) {$\nu_2$};
	\node[two-hole] (b) at (0,3.6) {$\nu_2$};
	\node[tore] (c) at (-0.6,-1.4) {$\nu_1$};
	\node[two-hole] (d) at (2.4,0) {$\nu_2$};
	
	\draw[front,red] (a) -- (b) node[labels] { $\color{red} \eta_1$};
	\draw[front,green] (a) -- (c) node[labels] { $\color{green} \eta_3$};
	\draw[back,purple] (a) -- (d) node[labels] { $\color{purple} \eta_2$};
	\draw[front,brown] (b) -- (c) node[labels] { $\color{brown} \eta_4$};
	\draw[front,purple] (d) -- (b) node[labels] { $\color{purple} \eta_2$};
	\draw[front,brown] (d) -- (c) node[labels] { $\color{brown} \eta_4$};
	
	\end{tikzpicture}
	\qquad
	\begin{tikzpicture}[scale=0.7,inner sep=0.05mm, centered]
	\tikzstyle{front}=[very thick,decoration={markings,mark=at position 1 with
		{\arrow[scale=1.2,>=stealth]{>}}},postaction={decorate}]
	\tikzstyle{back}=[loosely dashed,very thick, decoration={markings,mark=at position 1 with
		{\arrow[scale=1.2,>=stealth]{>}}},postaction={decorate}]
	
	\node[two-hole] (a) at (-2.4,0) {$\nu_2$};
	\node[two-hole] (b) at (0,3.6) {$\nu_2$};
	\node[two-hole] (c) at (-0.6,-1.4) {$\nu_2$};
	\node[tore] (d) at (2.4,0) {$\nu_1$};
	
	\draw[front,red] (a) -- (b) node[labels] { $\color{red} \eta_1$};
	\draw[front,purple] (a) -- (c) node[labels] { $\color{purple} \eta_2$};
	\draw[back,brown] (a) -- (d) node[labels] { $\color{brown} \eta_4$};
	\draw[front,purple] (c) -- (b) node[labels] { $\color{purple} \eta_2$};
	\draw[front,orange] (b) -- (d) node[labels] { $\color{orange} \eta_5$};
	\draw[front,brown] (c) -- (d) node[labels] { $\color{brown} \eta_4$};
	
	\end{tikzpicture}
	
	\begin{tikzpicture}[scale=0.7,inner sep=0.05mm, centered]
	\tikzstyle{front}=[very thick,decoration={markings,mark=at position 1 with
		{\arrow[scale=1.2,>=stealth]{>}}},postaction={decorate}]
	\tikzstyle{back}=[loosely dashed,very thick, decoration={markings,mark=at position 1 with
		{\arrow[scale=1.2,>=stealth]{>}}},postaction={decorate}]

	\node[two-hole] (a) at (-2.4,0) {$\nu_2$};
	\node[two-hole] (b) at (0,3.6) {$\nu_2$};
	\node[tore] (c) at (-0.6,-1.4) {$\nu_1$};
	\node[two-hole] (d) at (2.4,0) {$\nu_2$};
	
	\draw[front,red] (a) -- (b) node[labels] { $\color{red} \eta_1$};
	\draw[front,brown] (a) -- (c) node[labels] { $\color{brown} \eta_4$};
	\draw[back,purple] (a) -- (d) node[labels] { $\color{purple} \eta_2$};
	\draw[front,orange] (b) -- (c) node[labels] { $\color{orange} \eta_5$};
	\draw[front,purple] (d) -- (b) node[labels] { $\color{purple} \eta_2$};
	\draw[front,orange] (d) -- (c) node[labels] { $\color{orange} \eta_5$};
	
	\end{tikzpicture}
	\qquad
	\begin{tikzpicture}[scale=0.7,inner sep=0.05mm, centered]
	\tikzstyle{front}=[very thick,decoration={markings,mark=at position 1 with
		{\arrow[scale=1.2,>=stealth]{>}}},postaction={decorate}]
	\tikzstyle{back}=[loosely dashed,very thick, decoration={markings,mark=at position 1 with
		{\arrow[scale=1.2,>=stealth]{>}}},postaction={decorate}]
	
	\node[two-hole] (a) at (-2.4,0) {$\nu_2$};
	\node[two-hole] (b) at (0,3.6) {$\nu_2$};
	\node[two-hole] (c) at (-0.6,-1.4) {$\nu_2$};
	\node[tore] (d) at (2.4,0) {$\nu_1$};
	
	\draw[front,red] (a) -- (b) node[labels] { $\color{red} \eta_1$};
	\draw[front,purple] (a) -- (c) node[labels] { $\color{purple} \eta_2$};
	\draw[back,orange] (a) -- (d) node[labels] { $\color{orange} \eta_5$};
	\draw[front,purple] (c) -- (b) node[labels] { $\color{purple} \eta_2$};
	\draw[front,blue] (b) -- (d) node[labels] { $\color{blue} \eta_6$};
	\draw[front,orange] (c) -- (d) node[labels] { $\color{orange} \eta_5$};
	
	\end{tikzpicture}
	
	\begin{tikzpicture}[scale=0.7,inner sep=0.05mm, centered]
	\tikzstyle{front}=[very thick,decoration={markings,mark=at position 1 with
		{\arrow[scale=1.2,>=stealth]{>}}},postaction={decorate}]
	\tikzstyle{back}=[loosely dashed,very thick, decoration={markings,mark=at position 1 with
		{\arrow[scale=1.2,>=stealth]{>}}},postaction={decorate}]

	\node[two-hole] (a) at (-2.4,0) {$\nu_2$};
	\node[two-hole] (b) at (0,3.6) {$\nu_2$};
	\node[tore] (c) at (-0.6,-1.4) {$\nu_1$};
	\node[two-hole] (d) at (2.4,0) {$\nu_2$};
	
	\draw[front,red] (a) -- (b) node[labels] { $\color{red} \eta_1$};
	\draw[front,orange] (a) -- (c) node[labels] { $\color{orange} \eta_5$};
	\draw[back,purple] (a) -- (d) node[labels] { $\color{purple} \eta_2$};
	\draw[front,blue] (b) -- (c) node[labels] { $\color{blue} \eta_6$};
	\draw[front,purple] (d) -- (b) node[labels] { $\color{purple} \eta_2$};
	\draw[front,blue] (d) -- (c) node[labels] { $\color{blue} \eta_6$};
	
	\end{tikzpicture}
	\qquad
	\begin{tikzpicture}[scale=0.7,inner sep=0.05mm, centered]
	\tikzstyle{front}=[very thick,decoration={markings,mark=at position 1 with
		{\arrow[scale=1.2,>=stealth]{>}}},postaction={decorate}]
	\tikzstyle{back}=[loosely dashed,very thick, decoration={markings,mark=at position 1 with
		{\arrow[scale=1.2,>=stealth]{>}}},postaction={decorate}]
	
	\node[two-hole] (a) at (-2.4,0) {$\nu_2$};
	\node[two-hole] (b) at (0,3.6) {$\nu_2$};
	\node[two-hole] (c) at (-0.6,-1.4) {$\nu_2$};
	\node[tore] (d) at (2.4,0) {$\nu_1$};
	
	\draw[front,red] (a) -- (b) node[labels] { $\color{red} \eta_1$};
	\draw[front,purple] (a) -- (c) node[labels] { $\color{purple} \eta_2$};
	\draw[back,blue] (a) -- (d) node[labels] { $\color{blue} \eta_6$};
	\draw[front,purple] (c) -- (b) node[labels] { $\color{purple} \eta_2$};
	\draw[front,green] (b) -- (d) node[labels] { $\color{green} \eta_3$};
	\draw[front,blue] (c) -- (d) node[labels] { $\color{blue} \eta_6$};
	
	\end{tikzpicture}
	
	\caption{The ordered ideal triangulation $\mathcal{T}_3$ of the 3-manifold $M_3$} \label{fig:T3}
\end{figure}
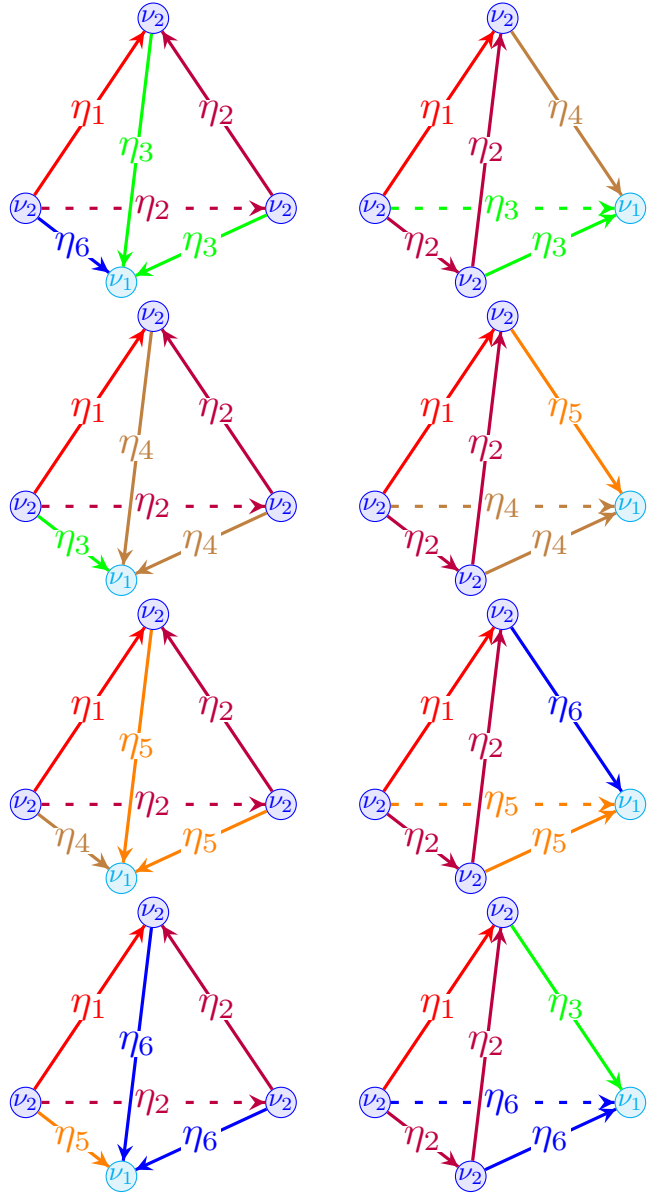

\subsubsection{Hyperbolic structure}
As a specific case of Section \ref{sec:Mg:hyp}, 
the unique complete hyperbolic structure on the manifold $M_3$ is given by the angles
$$\alpha_3=\frac{\pi}{8}, \ \ \beta_3=2 \alpha_3=\frac{\pi}{4}, \ \ \gamma_3= \arccos((2\cos\alpha_3)^{-1}),  \ \ \delta_3 = \pi - 2 \gamma_3,$$
and the hyperbolic volume of $M_3$ is computed (via the code of  Section \ref{sec:codehyperbolicvolumeg}) to be $\Vol(M_3)=18.03810545488482...$.

\subsubsection{Admissible colorings and Turaev--Viro Invariants} Using Definition \ref{TuraevVirosum} for $(M_3,\mathcal{T}_3)$, 
we can determine the edge terms and tetrahedron terms 
contributing to $TV_{r,s}(M_3,\mathcal{T}_3)$.
Since there are no regular vertices in $\mathcal{T}_3$, the regular vertices term  is thus $N=\left(\sum_{i\in I_r}w_i^2\right)^{0}=1$.

\begin{figure}[!h]
	\centering
	\begin{tikzpicture}[scale=0.7,inner sep=0.05mm, centered]

	\node[rod] (a) at (-2.4,0) {0};
	\node[rod] (b) at (0,3.6) {2};
	\node[rod] (c) at (-0.6,-1.4) {3};
	\node[rod] (d) at (2.4,0) {1};
	
	\draw[frontbis,red] (a) -- (b) node[labels] { $\color{red} a$};
	\draw[frontbis,blue] (a) -- (c) node[labels] { $\color{blue} c_3$};
	\draw[backbis,purple] (a) -- (d) node[labels] { $\color{purple} b$};
	\draw[frontbis,green] (b) -- (c) node[labels] { $\color{green} c_0$};
	\draw[frontbis,purple] (d) -- (b) node[labels] { $\color{purple} b$};
	\draw[frontbis,green] (d) -- (c) node[labels] { $\color{green} c_0$};
	
	\node (ab) at (4,2.4) {$\begin{vmatrix} b & a & b \\ c_0 & c_0 & c_3 \end{vmatrix} $};
	\end{tikzpicture}
	\qquad
	\begin{tikzpicture}[scale=0.7,inner sep=0.05mm, centered]
	\node[rod] (a) at (-2.4,0) {0};
	\node[rod] (b) at (0,3.6) {2};
	\node[rod] (c) at (-0.6,-1.4) {1};
	\node[rod] (d) at (2.4,0) {3};

	\draw[frontbis,red] (a) -- (b) node[labels] { $\color{red} a$};
	\draw[frontbis,purple] (a) -- (c) node[labels] { $\color{purple} b$};
	\draw[backbis,green] (a) -- (d) node[labels] { $\color{green} c_0$};
	\draw[frontbis,purple] (c) -- (b) node[labels] { $\color{purple} b$};
	\draw[frontbis,brown] (b) -- (d) node[labels] { $\color{brown} c_1$};
	\draw[frontbis,green] (c) -- (d) node[labels] { $\color{green} c_0$};
	
	\node (ab) at (4,2.4) {$\begin{vmatrix} b & a & b \\ c_1 & c_0 & c_0 \end{vmatrix} $};
	\end{tikzpicture}
	
	\begin{tikzpicture}[scale=0.7,inner sep=0.05mm, centered]

	\node[rod] (a) at (-2.4,0) {0};
	\node[rod] (b) at (0,3.6) {2};
	\node[rod] (c) at (-0.6,-1.4) {3};
	\node[rod] (d) at (2.4,0) {1};
	
	\draw[frontbis,red] (a) -- (b) node[labels] { $\color{red} a$};
	\draw[frontbis,green] (a) -- (c) node[labels] { $\color{green} c_0$};
	\draw[backbis,purple] (a) -- (d) node[labels] { $\color{purple} b$};
	\draw[frontbis,brown] (b) -- (c) node[labels] { $\color{brown} c_1$};
	\draw[frontbis,purple] (d) -- (b) node[labels] { $\color{purple} b$};
	\draw[frontbis,brown] (d) -- (c) node[labels] { $\color{brown} c_1$};
	
	\node (ab) at (4,2.4) {$\begin{vmatrix} b & a & b \\ c_1 & c_1 & c_0 \end{vmatrix}$};
	\end{tikzpicture}
	\qquad
	\begin{tikzpicture}[scale=0.7,inner sep=0.05mm, centered]
	\node[rod] (a) at (-2.4,0) {0};
	\node[rod] (b) at (0,3.6) {2};
	\node[rod] (c) at (-0.6,-1.4) {1};
	\node[rod] (d) at (2.4,0) {3};
	
	\draw[frontbis,red] (a) -- (b) node[labels] { $\color{red} a$};
	\draw[frontbis,purple] (a) -- (c) node[labels] { $\color{purple} b$};
	\draw[backbis,brown] (a) -- (d) node[labels] { $\color{brown} c_1$};
	\draw[frontbis,purple] (c) -- (b) node[labels] { $\color{purple} b$};
	\draw[frontbis,orange] (b) -- (d) node[labels] { $\color{orange} c_2$};
	\draw[frontbis,brown] (c) -- (d) node[labels] { $\color{brown} c_1$};
	
	\node (ab) at (4,2.4) {$\begin{vmatrix} b & a & b \\ c_2 & c_1 & c_1 \end{vmatrix} $};
	\end{tikzpicture}
	
	\begin{tikzpicture}[scale=0.7,inner sep=0.05mm, centered]

	\node[rod] (a) at (-2.4,0) {0};
	\node[rod] (b) at (0,3.6) {2};
	\node[rod] (c) at (-0.6,-1.4) {3};
	\node[rod] (d) at (2.4,0) {1};
	
	\draw[frontbis,red] (a) -- (b) node[labels] { $\color{red} a$};
	\draw[frontbis,brown] (a) -- (c) node[labels] { $\color{brown} c_1$};
	\draw[backbis,purple] (a) -- (d) node[labels] { $\color{purple} b$};
	\draw[frontbis,orange] (b) -- (c) node[labels] { $\color{orange} c_2$};
	\draw[frontbis,purple] (d) -- (b) node[labels] { $\color{purple} b$};
	\draw[frontbis,orange] (d) -- (c) node[labels] { $\color{orange} c_2$};
	
	\node (ab) at (4,2.4) {$\begin{vmatrix} b & a & b \\ c_2 & c_2 & c_1 \end{vmatrix} $};
	
	\end{tikzpicture}
	\qquad
	\begin{tikzpicture}[scale=0.7,inner sep=0.05mm, centered]
	\node[rod] (a) at (-2.4,0) {0};
	\node[rod] (b) at (0,3.6) {2};
	\node[rod] (c) at (-0.6,-1.4) {1};
	\node[rod] (d) at (2.4,0) {3};

	\draw[frontbis,red] (a) -- (b) node[labels] { $\color{red} a$};
	\draw[frontbis,purple] (a) -- (c) node[labels] { $\color{purple} b$};
	\draw[backbis,orange] (a) -- (d) node[labels] { $\color{orange} c_2$};
	\draw[frontbis,purple] (c) -- (b) node[labels] { $\color{purple} b$};
	\draw[frontbis,blue] (b) -- (d) node[labels] { $\color{blue} c_3$};
	\draw[frontbis,orange] (c) -- (d) node[labels] { $\color{orange} c_2$};
	
	\node (ab) at (4,2.4) {$\begin{vmatrix} b & a & b \\ c_3 & c_2 & c_2 \end{vmatrix} $};
	\end{tikzpicture}
	
	\begin{tikzpicture}[scale=0.7,inner sep=0.05mm, centered]

	\node[rod] (a) at (-2.4,0) {0};
	\node[rod] (b) at (0,3.6) {2};
	\node[rod] (c) at (-0.6,-1.4) {3};
	\node[rod] (d) at (2.4,0) {1};
	
	\draw[frontbis,red] (a) -- (b) node[labels] { $\color{red} a$};
	\draw[frontbis,orange] (a) -- (c) node[labels] { $\color{orange} c_2$};
	\draw[backbis,purple] (a) -- (d) node[labels] { $\color{purple} b$};
	\draw[frontbis,blue] (b) -- (c) node[labels] { $\color{blue} c_3$};
	\draw[frontbis,purple] (d) -- (b) node[labels] { $\color{purple} b$};
	\draw[frontbis,blue] (d) -- (c) node[labels] { $\color{blue} c_3$};
	
	\node (ab) at (4,2.4) {$\begin{vmatrix} b & a & b \\ c_3 & c_3 & c_2 \end{vmatrix} $};
	\end{tikzpicture}
	\qquad
	\begin{tikzpicture}[scale=0.7,inner sep=0.05mm, centered]
	\node[rod] (a) at (-2.4,0) {0};
	\node[rod] (b) at (0,3.6) {2};
	\node[rod] (c) at (-0.6,-1.4) {1};
	\node[rod] (d) at (2.4,0) {3};

	\draw[frontbis,red] (a) -- (b) node[labels] { $\color{red} a$};
	\draw[frontbis,purple] (a) -- (c) node[labels] { $\color{purple} b$};
	\draw[backbis,blue] (a) -- (d) node[labels] { $\color{blue} c_3$};
	\draw[frontbis,purple] (c) -- (b) node[labels] { $\color{purple} b$};
	\draw[frontbis,green] (b) -- (d) node[labels] { $\color{green} c_0$};
	\draw[frontbis,blue] (c) -- (d) node[labels] { $\color{blue} c_3$};
	
	\node (ab) at (4,2.4) {$\begin{vmatrix} b & a & b \\ c_0 & c_3 & c_3 \end{vmatrix} $};
	\end{tikzpicture}
	
	\caption{The coloring $c$ of the ideal tetrahedra of $\mathcal{T}_3$ together with their respective tetrahedron terms}\label{fig:coloring3}
\end{figure}

Let $(a,b,c_0,c_1,c_2,c_3)$ be a sextuple of elements of $I_r$. A coloring $c:X^1_{\sim} \to I_r$ such as in Figure \ref{fig:coloring3} is admissible if and only if it satisfies the conditions of Theorem \ref{thm:allowedstates}, thus

$$\mathcal{A}_r(M_3,\mathcal{T}_3)=\left\{ 
\begin{pmatrix}
a\\
b\\
c_0\\
c_1\\
c_2\\
c_3
\end{pmatrix}
\in I_r^5\;
\begin{tabular}{|l}
$a,b \in \mathbb{N}$, \\
$\frac{a}{2} \leq b \leq \frac{r-2-a}{2}$, \\
either $c_0,c_1,c_2 \in \mathbb{N}$ or $c_0,c_1,c_2 \in \frac{\mathbb{N}_{odd}}{2}$, \\
$\max\left(\frac{b}{2},a - c_3,c_3- \min (a,b)\right) \leq c_{0}$, \\
$c_{0} \leq \min\left(\frac{r-2-b}{2},r - 2 - a - c_3, \min (a,b) + c_3\right)$,\\

$\max\left(\frac{b}{2},a - c_0,c_0- \min (a,b)\right) \leq c_{1}$, \\
$c_{1} \leq \min\left(\frac{r-2-b}{2},r - 2 - a - c_0, \min (a,b) + c_0\right)$,\\
$\max\left(\frac{b}{2},a - c_1,c_1- \min (a,b)\right) \leq c_{2}$, \\
$c_{2} \leq \min\left(\frac{r-2-b}{2},r - 2 - a - c_1, \min (a,b) + c_1\right)$,\\
$\max\left(\frac{b}{2},a - c_2,c_2- \min (a,b)\right) \leq c_{3}$, \\
$c_{3} \leq \min\left(\frac{r-2-b}{2},r - 2 - a - c_2, \min (a,b) + c_2\right)$

\end{tabular}
\right\}.$$

From Figure \ref{fig:coloring3},
we can determine the eight tetrahedron terms $| T |_c$ and six edge terms $|\eta|_c$ associated to the coloring $c$ which gives us the following equation:
\begin{align*}TV_{r,s}(M_3,\mathcal{T}_3)=\sum_{(a,b,c_0,c_1,c_2,c_3)\in \mathcal{A}_r(M_3,\mathcal{T}_3)}
\hspace*{-1cm}w_aw_bw_{c_0}w_{c_1}w_{c_2}w_{c_3}
\cdot &
\begin{vmatrix} 
b & a & b \\ 
c_0 & c_0 & c_3 
\end{vmatrix}
\begin{vmatrix} 
b & a & b \\ 
c_1 & c_0 & c_0 
\end{vmatrix}\\
\cdot &
\begin{vmatrix} 
b & a & b \\ 
c_1 & c_1 & c_0 
\end{vmatrix}
\begin{vmatrix} 
b & a & b \\ 
c_2 & c_1 & c_1 
\end{vmatrix}\\
\cdot & \begin{vmatrix} 
b & a & b \\ 
c_2 & c_2 & c_1 
\end{vmatrix} \begin{vmatrix} 
b & a & b \\ 
c_3 & c_2 & c_2 
\end{vmatrix}\\
\cdot &
\begin{vmatrix} 
b & a & b \\ 
c_3 & c_3 & c_2 
\end{vmatrix} \begin{vmatrix} 
b & a & b \\ 
c_0 & c_3 & c_3 
\end{vmatrix}
.
\end{align*}

Using Proposition \ref{prop:allowedperm} and a bit of reordering, we can rewrite the summation as:
\begin{align*}TV_{r,s}(M_3,\mathcal{T}_3)=\sum_{(a,b,c_0,c_1,c_2,c_3)\in \mathcal{A}_r(M_3,\mathcal{T}_3)}
\hspace*{-1cm}
w_aw_bw_{c_0}w_{c_1}w_{c_2}w_{c_3} \cdot &
\begin{vmatrix} 
a & b & b \\ 
c_0 & c_0 & c_3 
\end{vmatrix}
\begin{vmatrix} 
a & b & b \\ 
c_0 & c_0 & c_1 
\end{vmatrix}\\
\cdot & \begin{vmatrix} 
a & b & b \\ 
c_1 & c_1 & c_0 
\end{vmatrix} \begin{vmatrix} 
a & b & b \\ 
c_1 & c_1 & c_2 
\end{vmatrix}\\
\cdot &
\begin{vmatrix} 
a & b & b \\ 
c_2 & c_2 & c_1 
\end{vmatrix}
\begin{vmatrix} 
a & b & b \\ 
c_2 & c_2 & c_3 
\end{vmatrix}\\
\cdot &
\begin{vmatrix} 
a & b & b \\ 
c_3 & c_3 & c_2 
\end{vmatrix} \begin{vmatrix} 
a & b & b \\ 
c_3 & c_3 & c_0 
\end{vmatrix}
.
\end{align*}\label{TuraevVirog3}

\subsubsection{Numerical check of the volume conjecture}\label{sec:num:disc:M3}

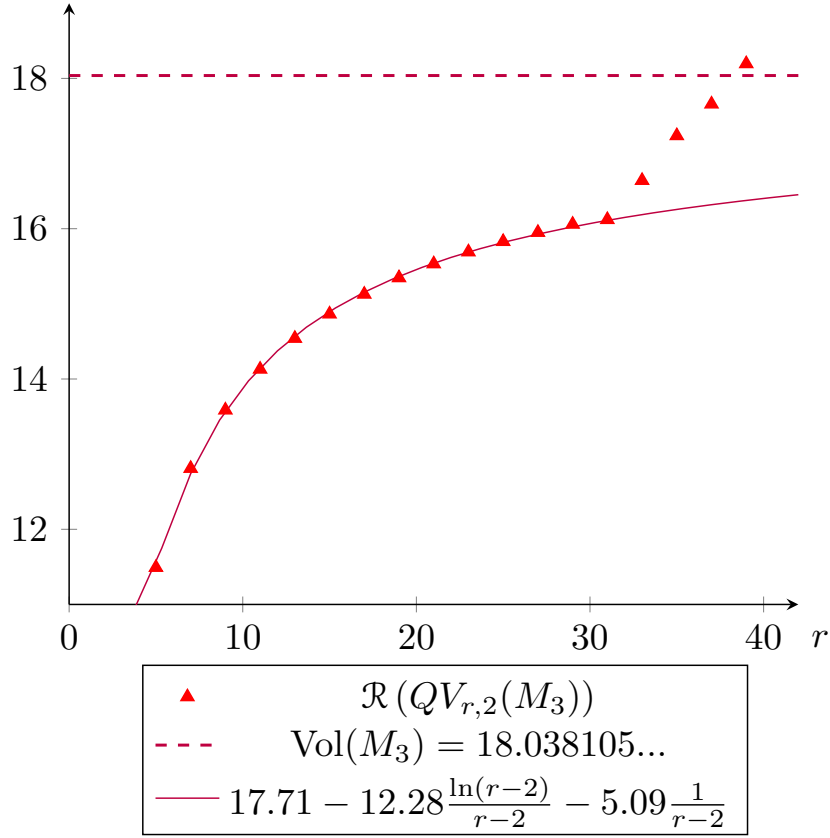
\begin{figure}[!h]
	\begin{tikzpicture}[scale=1.4]
	\begin{axis}[axis x line=bottom,axis y line = left,
	ymin=11,ymax=19,xmin=0,
				xlabel={$r$}, 	xlabel style={at={(ticklabel* cs:1)},anchor=south west},
	legend style={
		at={(0,0)},
		anchor=north west,at={(axis description cs:0.1,-0.1)}}]
	\addplot[only marks, color=red, mark=triangle*] coordinates { 
	(5, 11.491773174191012074629725248087197542190551757812500)
	(7, 12.809346931911131051151642168406397104263305664062500)
	(9, 13.586151973408931326048332266509532928466796875000000)
	(11, 14.129555078458251315964844252448529005050659179687500)
	(13, 14.539979515906729545804410008713603019714355468750000)
	(15, 14.863888961693005441588866233360022306442260742187500)
	(17, 15.127247630491153174148166726808995008468627929687500)
	(19, 15.346186022382187985613199998624622821807861328125000)
	(21, 15.531417754100420580698482808656990528106689453125000)
	(23, 15.690397895826002638841600855812430381774902343750000)
	(25, 15.828495065509965655792257166467607021331787109375000)
	(27, 15.949722725722734750775089196395128965377807617187500)
	(29, 16.058476644885779194282804382964968681335449218750000)
	(31, 16.120649414384580211390129989013075828552246093750000)
	(33, 16.641084193443052896554945618845522403717041015625000)
	(35, 17.236774728481130125601339386776089668273925781250000)
	(37, 17.657931004699285892911575501784682273864746093750000)
	(39, 18.194388759270086808328414917923510074615478515625000)	
};
\addlegendentry{$\mathcal{R}\left(QV_{r,2}(M_3)\right)$}
	\addplot[mark=none,  color=purple,  dashed, thick, samples=2,domain=0:42] {18.03810545488482};
	\addlegendentry{Vol$(M_3)=18.038105...$}
		\addplot[mark=none, color=purple]	expression[domain=2:42]
	{ 	17.712568980467715 - 12.28401722081491 * ln(x-2)/(x-2) - 5.092760978446523 /(x-2) };
	\addlegendentry{$17.71 - 12.28 \frac{\ln(r-2)}{r-2} - 5.09 \frac{1}{r-2}$ }
	\end{axis}
	\end{tikzpicture}
	\caption{Graph of the values of $\mathcal{R}\left(QV_{r,2}(M_3)\right)$ (for $5 \leqslant r \leqslant 39$, red triangles) compared with the hyperbolic volume $\Vol(M_3)$ (red dashed line) and an interpolation of $\mathcal{R}\left(QV_{r,2}(M_3)\right)$  for $r \leqslant 31$ (red curve).}\label{graph3}
\end{figure}

Our conclusions and discussions are almost the same as for the example of $M_2$ in Section \ref{sec:num:disc:M2}. The only differences are that the strange behavior starts appearing earlier,  at $r=31$ (which makes sense since there are more tetrahedra and thus more terms in the sums), and the constant term from the interpolating function is equal to the volume $\Vol(M_3)$ up to $1.8 \%$.

\subsection{The cases of $M_4$ to $M_7$}\label{sec:M4-7}

For $4 \leqslant g \leqslant 7$, we do not observe a strange behavior of $\mathcal{R}\left(QV_{r,2}(M_g)\right)$ as $r$ increases, and we can thus interpolate all available values with the model $a+ b \cdot \frac{2\pi \ln(r-2)}{r-2} + c\frac{1}{r-2}$. 
The values of $a,b,c$ are listed in Figure \ref{fig:table:abc}; all computed values of $\mathcal{R}\left(QV_{r,2}(M_g)\right)$ and the associated interpolating functions are displayed in Figure \ref{fig:graphs:2-7}.

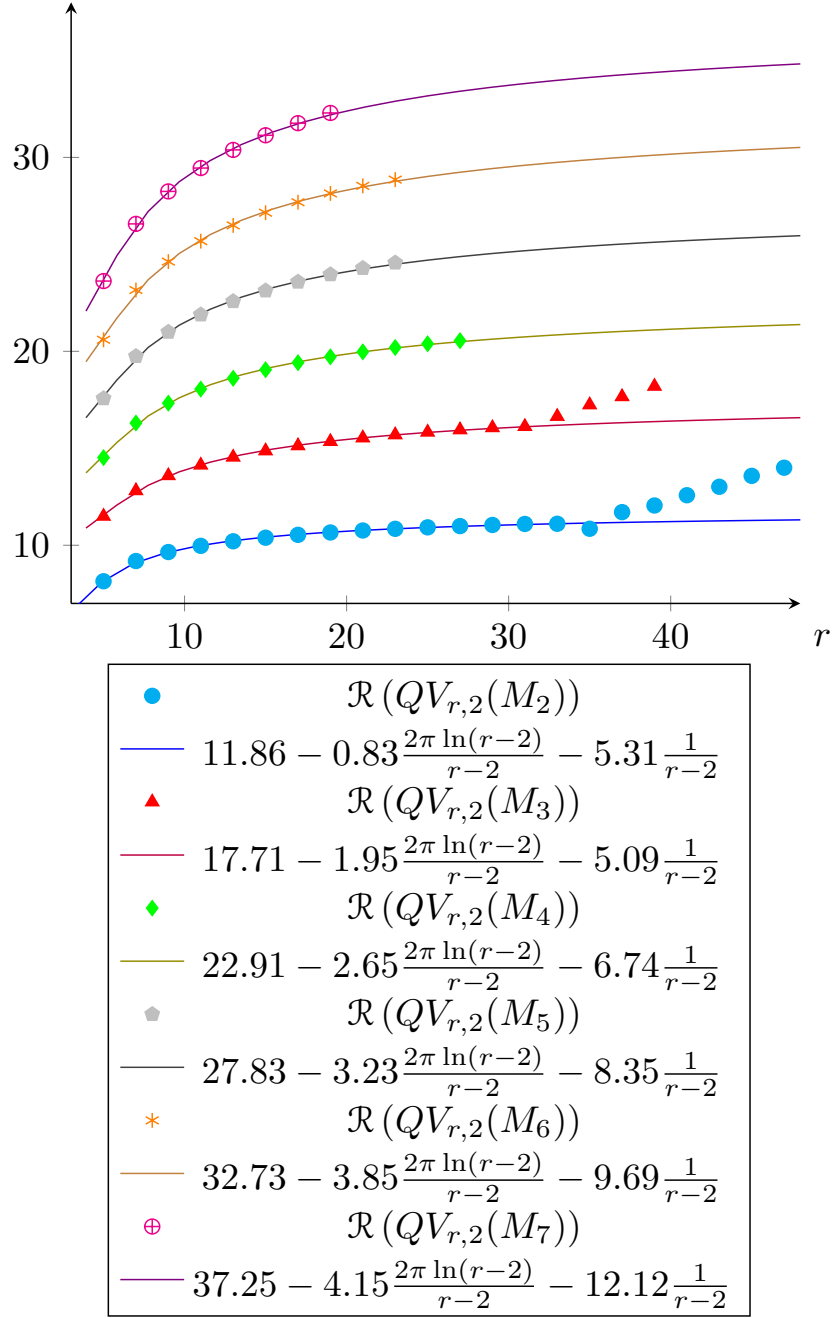
\begin{figure}[!h]
	\begin{tikzpicture}[scale=1.4]
	\begin{axis}[axis x line=bottom,axis y line = left,
	ymin=7,ymax=38,xmin=3,xmax=48,
				xlabel={$r$}, 	xlabel style={at={(ticklabel* cs:1)},anchor=south west},
	legend style={
		at={(0,0)},
		anchor=north west,at={(axis description cs:0.05,-0.1)}}]
			\addplot[only marks, color=cyan, mark=*] coordinates { 
		(5, 8.1438512366362676431208456051535904407501220703125000)
		(7, 9.1865044275999743206284620100632309913635253906250000)
		(9, 9.6500442717342984622064250288531184196472167968750000)
		(11, 9.9687923940144393952778045786544680595397949218750000)
		(13, 10.205138797268084260849718702957034111022949218750000)
		(15, 10.389143245927991543453572376165539026260375976562500)
		(17, 10.537044720057689772829689900390803813934326171875000)
		(19, 10.658791179050181696652543905656784772872924804687500)
		(21, 10.760913400121646432694433315191417932510375976562500)
		(23, 10.847905976240641123808927659410983324050903320312500)
		(25, 10.922973570521106623232299170922487974166870117187500)
		(27, 10.988467157525978024068535887636244297027587890625000)
		(29, 11.046148275345199607500035199336707592010498046875000)
		(31, 11.098196587000291657432171632535755634307861328125000)
		(33, 11.107448533373510457522570504806935787200927734375000)
		(35, 10.850765102815952189985182485543191432952880859375000)
		(37, 11.708239322382269165245816111564636230468750000000000)
		(39, 12.050344710523390290290990378707647323608398437500000)
		(41, 12.579842785654816950113854545634239912033081054687500)
		(43, 13.014970454697428081658472365234047174453735351562500)
		(45, 13.578833041725896890739022637717425823211669921875000)
		(47, 13.998514523476618620634326362051069736480712890625000)
	};
	\addlegendentry{$\mathcal{R}\left(QV_{r,2}(M_2)\right)\hspace*{-0.05cm}$}
		\addplot[mark=none, color=blue]	expression[domain=1:48]
	{ 	11.86209740389381 - 5.249990563380644 * ln(x-2)/(x-2) - 5.310168450722084 /(x-2) };
	\addlegendentry{$11.86 - 0.83 \frac{2\pi \ln(r-2)}{r-2} - 5.31 \frac{1}{r-2}$ }
	\addplot[only marks, color=red, mark=triangle*] coordinates { 
		(5, 11.491773174191012074629725248087197542190551757812500)
		(7, 12.809346931911131051151642168406397104263305664062500)
		(9, 13.586151973408931326048332266509532928466796875000000)
		(11, 14.129555078458251315964844252448529005050659179687500)
		(13, 14.539979515906729545804410008713603019714355468750000)
		(15, 14.863888961693005441588866233360022306442260742187500)
		(17, 15.127247630491153174148166726808995008468627929687500)
		(19, 15.346186022382187985613199998624622821807861328125000)
		(21, 15.531417754100420580698482808656990528106689453125000)
		(23, 15.690397895826002638841600855812430381774902343750000)
		(25, 15.828495065509965655792257166467607021331787109375000)
		(27, 15.949722725722734750775089196395128965377807617187500)
		(29, 16.058476644885779194282804382964968681335449218750000)
		(31, 16.120649414384580211390129989013075828552246093750000)
		(33, 16.641084193443052896554945618845522403717041015625000)
		(35, 17.236774728481130125601339386776089668273925781250000)
		(37, 17.657931004699285892911575501784682273864746093750000)
		(39, 18.194388759270086808328414917923510074615478515625000)	
	};
	\addlegendentry{$\mathcal{R}\left(QV_{r,2}(M_3)\right)\hspace*{-0.05cm}$}
		\addplot[mark=none, color=purple]	expression[domain=2:48]
	{ 	17.712568980467715 - 12.28401722081491 * ln(x-2)/(x-2) - 5.092760978446523 /(x-2) };
	\addlegendentry{$17.71 - 1.95 \frac{2\pi \ln(r-2)}{r-2} - 5.09 \frac{1}{r-2}$ }
		\addplot[only marks, color=green, mark=diamond*] coordinates { 
		(5, 14.517845178944693174116764566861093044281005859375000) (7, 16.302802374310992661321506602689623832702636718750000) (9, 17.327142856623950706307368818670511245727539062500000) (11, 18.054145674529269882668813806958496570587158203125000) (13, 18.609457032617608973623646306805312633514404296875000) (15, 19.051516219929318651793437311425805091857910156250000) (17, 19.413508161692710984880250180140137672424316406250000) (19, 19.716284029193495541676384164020419120788574218750000) (21, 19.973806557129183403276329045183956623077392578125000) (23, 20.195861821732123075889830943197011947631835937500000) (25, 20.389625642022146223553136223927140235900878906250000) (27, 20.547171706232212784470902988687157630920410156250000)
	};
	\addlegendentry{$\mathcal{R}\left(QV_{r,2}(M_4)\right)\hspace*{-0.05cm}$}
	\addplot[mark=none, color=olive]	expression[domain=2:48]
	{ 		22.915923906664954 - 2.6567956302608406 *2*3.141592* ln(x-2)/(x-2) - 6.745879063222808 /(x-2) };
	\addlegendentry{$22.91 - 2.65 \frac{2\pi \ln(r-2)}{r-2} - 6.74 \frac{1}{r-2}$ }
		\addplot[only marks, color=lightgray, mark=pentagon*] coordinates { 
		(5, 17.568642904280036276531973271630704402923583984375000) (7, 19.744423674392258760690310737118124961853027343750000) (9, 20.994421513425283620790651184506714344024658203125000) (11, 21.888369191702082616757252253592014312744140625000000) (13, 22.576229525826679633837557048536837100982666015625000) (15, 23.127005211668375750377890653908252716064453125000000) (17, 23.580151816105676232382393209263682365417480468750000) (19, 23.960677405943933138132706517353653907775878906250000) (21, 24.285448747058417495736648561432957649230957031250000) (23, 24.566224648698209875874454155564308166503906250000000)
	};
	\addlegendentry{$\mathcal{R}\left(QV_{r,2}(M_5)\right)\hspace*{-0.05cm}$}
\addplot[mark=none, color=darkgray]	expression[domain=2:48]
{	27.83557719553294 - 3.2349164955231555 *2*3.141592* ln(x-2)/(x-2) - 8.359213988753478 /(x-2) };
\addlegendentry{$27.83 - 3.23 \frac{2\pi \ln(r-2)}{r-2} - 8.35 \frac{1}{r-2}$ }
		\addplot[only marks, color=orange, mark=asterisk] coordinates { 
		(5, 20.596357406109184751130669610574841499328613281250000) (7, 23.163348866909352352649875683709979057312011718750000) (9, 24.628262350956529047607546090148389339447021484375000) (11, 25.680448582551370861892792163416743278503417968750000) (13, 26.494087366631251967419302673079073429107666015625000) (15, 27.148296047923299312287781503982841968536376953125000) (17, 27.688370848092908715898374794051051139831542968750000) (19, 28.143169962468299161173490574583411216735839843750000) (21, 28.532213018574292817675086553208529949188232421875000) (23, 28.854667299367719124347786419093608856201171875000000)
	};
	\addlegendentry{$\mathcal{R}\left(QV_{r,2}(M_6)\right)\hspace*{-0.05cm}$}
\addplot[mark=none, color=brown]	expression[domain=2:48]
{	32.73892860575029 - 3.8524586349135586 *2*3.141592* ln(x-2)/(x-2) - 9.695251949481 /(x-2) };
\addlegendentry{$32.73 - 3.85 \frac{2\pi \ln(r-2)}{r-2} - 9.69 \frac{1}{r-2}$ }
		\addplot[only marks, color=magenta, mark=oplus] coordinates { 
		(5, 23.622943033664466128129788557998836040496826171875000) (7, 26.571766835199785106169656501151621341705322265625000) (9, 28.245413081924404252731619635596871376037597656250000) (11, 29.450659484057972292703198036178946495056152343750000) (13, 30.385898288856704851923495880328118801116943359375000) (15, 31.140193885488240965742079424671828746795654296875000) (17, 31.764488093384493083704001037403941154479980468750000) (19, 32.291287922779112307125615188851952552795410156250000)
	};
	\addlegendentry{$\mathcal{R}\left(QV_{r,2}(M_7)\right)\hspace*{-0.05cm}$}
\addplot[mark=none, color=violet]	expression[domain=2:48]
{ 		37.25645299703772 - 4.153424193352254 *2*3.141592* ln(x-2)/(x-2) - 12.120593520842961 /(x-2) };
\addlegendentry{$37.25 - 4.15 \frac{2\pi \ln(r-2)}{r-2} - 12.12 \frac{1}{r-2}$ }
	\end{axis}
	\end{tikzpicture}
	\caption{Graphs of the values of $\mathcal{R}\left(QV_{r,2}(M_g)\right)$ in function of $r \geqslant 5$ (for $2 \leqslant g \leqslant 7$), compared with their respective best  interpolations in the model $a+ b \cdot \frac{2\pi \ln(r-2)}{r-2} + c\frac{1}{r-2}$.}
	\label{fig:graphs:2-7}
\end{figure}

We observe good fits to the model $a+ b \cdot \frac{2\pi \ln(r-2)}{r-2} + c\frac{1}{r-2}$, with $a$ equal to the expected hyperbolic volume $\Vol(M_g)$ up to a few percents (see Figure \ref{fig:table:abc}). 
We conclude that the manifolds 
$M_2, \ldots, M_7$ seem to satisfy Conjectures \ref{conjecture} and \ref{conj:vol:bc} numerically.

\subsection{Behavior of the coefficient $b(M_g)$ relative to $g$}\label{sec:num:aff}

Let us now delve into Conjecture \ref{conj:vol:b:aff}. In this section, we assume that Conjecture \ref{conj:vol:bc} holds for the \mbox{manifolds} $M_2, \ldots, M_7$ (which is suggested numerically by the results of the previous sections). We then study whether or not the coefficient $b$ grows linearly in $g$.

Since we assume that Conjecture \ref{conj:vol:bc} holds for the manifolds $M_2, \ldots, M_7$, we can now fix $a=\Vol(M_g)$ in the model
$a+ b \cdot \frac{2\pi \ln(r-2)}{r-2} + c\frac{1}{r-2}$, and look once again for the best interpolation. 

Using \textbf{find\_fit} with the new model $\Vol(M_g)+ b \cdot \frac{2\pi \ln(r-2)}{r-2} + c\frac{1}{r-2}$ yields different values for $b,c$ than in Section \ref{sec:code:asymp}. These new values are listed in Figure \ref{fig:table:bc}
and the corresponding interpolating functions are displayed in Figure \ref{fig:graphs:vol:fixed:2-7}.

\begin{figure}[!h]
	\centering
	\begin{tabular}{|c|c|c|c|c|}
		\hline 
		$g$ & $r_{max}$ & $\Vol(M_g)$ &  $b$ & $c$ \\
		\hline 
		2 & 33 & 
		12.04609204 & 
		-1.07486449 & 
		-4.06269480
		 \\
		\hline 
		3 & 31 & 
		18.03810545 & 
		-2.36670389 &
		-2.98774665
	  \\
		\hline 
		4 & 27 & 
		23.60349490 & 
		-3.47345292 &
		-2.75451472 \\
		\hline 
		5 & 23 & 
		28.98945539 & 
		-4.50837608 &
		-2.48549875\\
		\hline 
		6 & 23 & 
		34.28064479 & 
		-5.55394983 &
		-1.84727854 \\
		\hline 
		7 & 19 & 
		39.51512785 & 
		-6.43483298 &
		-2.38715613 \\
		\hline 
	\end{tabular}
	\caption{Values of the interpolating coefficients $a,b,c$ for the model $\Vol(M_g)+ b \cdot \frac{2\pi \ln(r-2)}{r-2} + c\frac{1}{r-2}$ for $\mathcal{R}\left(QV_{r,2}(M_g)\right)$, with $5\leqslant r \leqslant r_{max}$.}\label{fig:table:bc}
\end{figure}

\begin{figure}[!h]
	\begin{tikzpicture}[scale=1.4]
	\begin{axis}[axis x line=bottom,axis y line = left,
	ymin=7,ymax=38,xmin=3,xmax=48,
			xlabel={$r$}, 	xlabel style={at={(ticklabel* cs:1)},anchor=south west},
	legend style={
		at={(0,0)},
		anchor=north west,at={(axis description cs:0.05,-0.1)}}]
	\addplot[only marks, color=cyan, mark=*] coordinates { 
		(5, 8.1438512366362676431208456051535904407501220703125000)
		(7, 9.1865044275999743206284620100632309913635253906250000)
		(9, 9.6500442717342984622064250288531184196472167968750000)
		(11, 9.9687923940144393952778045786544680595397949218750000)
		(13, 10.205138797268084260849718702957034111022949218750000)
		(15, 10.389143245927991543453572376165539026260375976562500)
		(17, 10.537044720057689772829689900390803813934326171875000)
		(19, 10.658791179050181696652543905656784772872924804687500)
		(21, 10.760913400121646432694433315191417932510375976562500)
		(23, 10.847905976240641123808927659410983324050903320312500)
		(25, 10.922973570521106623232299170922487974166870117187500)
		(27, 10.988467157525978024068535887636244297027587890625000)
		(29, 11.046148275345199607500035199336707592010498046875000)
		(31, 11.098196587000291657432171632535755634307861328125000)
		(33, 11.107448533373510457522570504806935787200927734375000)
		(35, 10.850765102815952189985182485543191432952880859375000)
		(37, 11.708239322382269165245816111564636230468750000000000)
		(39, 12.050344710523390290290990378707647323608398437500000)
		(41, 12.579842785654816950113854545634239912033081054687500)
		(43, 13.014970454697428081658472365234047174453735351562500)
		(45, 13.578833041725896890739022637717425823211669921875000)
		(47, 13.998514523476618620634326362051069736480712890625000)
	};
	\addlegendentry{$\mathcal{R}\left(QV_{r,2}(M_2)\right)\hspace*{-0.05cm}$}
	\addplot[mark=none, color=blue]	expression[domain=4:48]
	{		 	12.046092040094381 - 1.0748644910370262*2*3.141592 * ln(x-2)/(x-2) - 4.062694803667878 /(x-2) };
	\addlegendentry{$\Vol(M_2) - 1.07 \frac{2\pi \ln(r-2)}{r-2} - 4.06 \frac{1}{r-2}$ }
	\addplot[only marks, color=red, mark=triangle*] coordinates { 
		(5, 11.491773174191012074629725248087197542190551757812500)
		(7, 12.809346931911131051151642168406397104263305664062500)
		(9, 13.586151973408931326048332266509532928466796875000000)
		(11, 14.129555078458251315964844252448529005050659179687500)
		(13, 14.539979515906729545804410008713603019714355468750000)
		(15, 14.863888961693005441588866233360022306442260742187500)
		(17, 15.127247630491153174148166726808995008468627929687500)
		(19, 15.346186022382187985613199998624622821807861328125000)
		(21, 15.531417754100420580698482808656990528106689453125000)
		(23, 15.690397895826002638841600855812430381774902343750000)
		(25, 15.828495065509965655792257166467607021331787109375000)
		(27, 15.949722725722734750775089196395128965377807617187500)
		(29, 16.058476644885779194282804382964968681335449218750000)
		(31, 16.120649414384580211390129989013075828552246093750000)
		(33, 16.641084193443052896554945618845522403717041015625000)
		(35, 17.236774728481130125601339386776089668273925781250000)
		(37, 17.657931004699285892911575501784682273864746093750000)
		(39, 18.194388759270086808328414917923510074615478515625000)	
	};
	\addlegendentry{$\mathcal{R}\left(QV_{r,2}(M_3)\right)\hspace*{-0.05cm}$}
	\addplot[mark=none, color=purple]	expression[domain=4:48]
	{ 	18.03810545488482 - 2.3667038975251122*2*3.141592 * ln(x-2)/(x-2) - 2.9877466578670298 /(x-2) };
	\addlegendentry{$\Vol(M_3) - 2.36 \frac{2\pi \ln(r-2)}{r-2} - 2.98 \frac{1}{r-2}$ }
	\addplot[only marks, color=green, mark=diamond*] coordinates { 
		(5, 14.517845178944693174116764566861093044281005859375000) (7, 16.302802374310992661321506602689623832702636718750000) (9, 17.327142856623950706307368818670511245727539062500000) (11, 18.054145674529269882668813806958496570587158203125000) (13, 18.609457032617608973623646306805312633514404296875000) (15, 19.051516219929318651793437311425805091857910156250000) (17, 19.413508161692710984880250180140137672424316406250000) (19, 19.716284029193495541676384164020419120788574218750000) (21, 19.973806557129183403276329045183956623077392578125000) (23, 20.195861821732123075889830943197011947631835937500000) (25, 20.389625642022146223553136223927140235900878906250000) (27, 20.547171706232212784470902988687157630920410156250000)
	};
	\addlegendentry{$\mathcal{R}\left(QV_{r,2}(M_4)\right)\hspace*{-0.05cm}$}
	\addplot[mark=none, color=olive]	expression[domain=4:48]
	{ 		23.603494908554772  - 3.47345292029456 *2*3.141592* ln(x-2)/(x-2) - 2.7545147254229114 /(x-2) };
	\addlegendentry{$\Vol(M_4) - 3.47 \frac{2\pi \ln(r-2)}{r-2} - 2.75 \frac{1}{r-2}$ }
	\addplot[only marks, color=lightgray, mark=pentagon*] coordinates { 
		(5, 17.568642904280036276531973271630704402923583984375000) (7, 19.744423674392258760690310737118124961853027343750000) (9, 20.994421513425283620790651184506714344024658203125000) (11, 21.888369191702082616757252253592014312744140625000000) (13, 22.576229525826679633837557048536837100982666015625000) (15, 23.127005211668375750377890653908252716064453125000000) (17, 23.580151816105676232382393209263682365417480468750000) (19, 23.960677405943933138132706517353653907775878906250000) (21, 24.285448747058417495736648561432957649230957031250000) (23, 24.566224648698209875874454155564308166503906250000000)
	};
	\addlegendentry{$\mathcal{R}\left(QV_{r,2}(M_5)\right)\hspace*{-0.05cm}$}
	\addplot[mark=none, color=darkgray]	expression[domain=0:48]
	{	28.989455390245897  - 4.508376089192973 *2*3.141592* ln(x-2)/(x-2) - 2.485498757732156 /(x-2) };
	\addlegendentry{$\Vol(M_5)- 4.51 \frac{2\pi \ln(r-2)}{r-2} - 2.48 \frac{1}{r-2}$ }
	\addplot[only marks, color=orange, mark=asterisk] coordinates { 
		(5, 20.596357406109184751130669610574841499328613281250000) (7, 23.163348866909352352649875683709979057312011718750000) (9, 24.628262350956529047607546090148389339447021484375000) (11, 25.680448582551370861892792163416743278503417968750000) (13, 26.494087366631251967419302673079073429107666015625000) (15, 27.148296047923299312287781503982841968536376953125000) (17, 27.688370848092908715898374794051051139831542968750000) (19, 28.143169962468299161173490574583411216735839843750000) (21, 28.532213018574292817675086553208529949188232421875000) (23, 28.854667299367719124347786419093608856201171875000000)
	};
	\addlegendentry{$\mathcal{R}\left(QV_{r,2}(M_6)\right)\hspace*{-0.05cm}$}
	\addplot[mark=none, color=brown]	expression[domain=4:48]
	{	34.28064479640226 - 5.5539498337681374 *2*3.141592* ln(x-2)/(x-2) - 1.8472785430067207 /(x-2) };
	\addlegendentry{$\Vol(M_6) - 5.55 \frac{2\pi \ln(r-2)}{r-2} - 1.84 \frac{1}{r-2}$ }
	\addplot[only marks, color=magenta, mark=oplus] coordinates { 
		(5, 23.622943033664466128129788557998836040496826171875000) (7, 26.571766835199785106169656501151621341705322265625000) (9, 28.245413081924404252731619635596871376037597656250000) (11, 29.450659484057972292703198036178946495056152343750000) (13, 30.385898288856704851923495880328118801116943359375000) (15, 31.140193885488240965742079424671828746795654296875000) (17, 31.764488093384493083704001037403941154479980468750000) (19, 32.291287922779112307125615188851952552795410156250000)
	};
	\addlegendentry{$\mathcal{R}\left(QV_{r,2}(M_7)\right)\hspace*{-0.05cm}$}
	\addplot[mark=none, color=violet]	expression[domain=4:48]
	{ 		39.51512785426426 - 6.434832984145784 *2*3.141592* ln(x-2)/(x-2) - 2.387156138256412 /(x-2) };
	\addlegendentry{$\Vol(M_7) - 6.43 \frac{2\pi \ln(r-2)}{r-2} - 2.38 \frac{1}{r-2}$ }
	\end{axis}
	\end{tikzpicture}
	\caption{Graphs of the values of $\mathcal{R}\left(QV_{r,2}(M_g)\right)$ in function of $r \geqslant 5$ (for $2 \leqslant g \leqslant 7$), compared with their respective best  interpolations in the model $\Vol(M_g)+ b \cdot \frac{2\pi \ln(r-2)}{r-2} + c\frac{1}{r-2}$.}
	\label{fig:graphs:vol:fixed:2-7}
\end{figure}
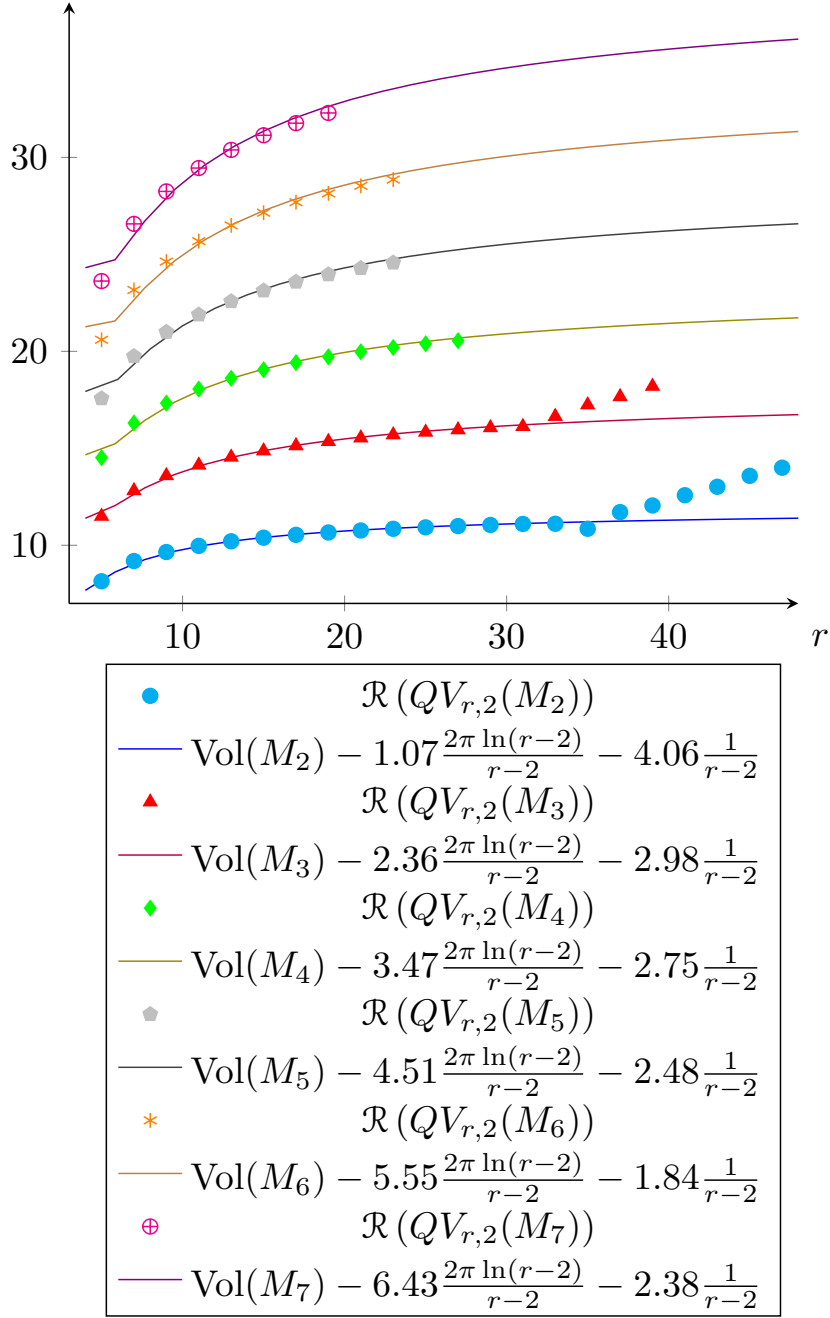

As expected by their definitions, the interpolations of Figure \ref{fig:graphs:vol:fixed:2-7} are less fitting than the ones of Figure \ref{fig:graphs:2-7}, but they still seem satisfactory.

Figure \ref{fig:aff:b} displays the values of the coefficient $b$ in function of $g$ (as black asterisks), with the corresponding best linear interpolation (the green line). The coefficient of determination of this linear interpolation is $R^2 =0.9967$, which gives much credit to the hypothesis of the affine behaviour of $b$.

\begin{figure}[!h]
	\begin{tikzpicture}[scale=1.4]
	\begin{axis}[axis x line=bottom, xlabel={$g$},  axis y line = left,
	ymin=-7,ymax=-0.5,xmin=1,xmax=7.8,
	xlabel style={at={(ticklabel* cs:1)},anchor=south west},
	legend style={
		at={(0,0)},
		anchor=north west,at={(axis description cs:0.1,-0.1)}}
	]
	\addplot[only marks, color=black, mark=star] coordinates { 
		(2, -1.0748644910370262)
		(3, -2.3667038975251122)
		(4, -3.47345292029456)
		(5, -4.508376089192973)
		(6, -5.5539498337681374)
		(7, -6.434832984145784)
	};
	\addlegendentry{$b$ coefficient for $\mathcal{R}\left(QV_{r,2}(M_g)\right)\hspace*{-0.05cm}$}
	\addplot[mark=none, color=teal]	expression[domain=0:8]
	{ 	0.9061 -1.068*x };
	\addlegendentry{$0.9061 -1.068 \ g$ }
	\end{axis}
	\end{tikzpicture}
	\caption{Values of the coefficient $b(M_g)$ for $2 \leqslant g \leqslant 7$ (black asterisks), and the associated interpolating affine function (green line), with   coefficient of determination  $R^2 = 0.9967$.}
	\label{fig:aff:b}
\end{figure}
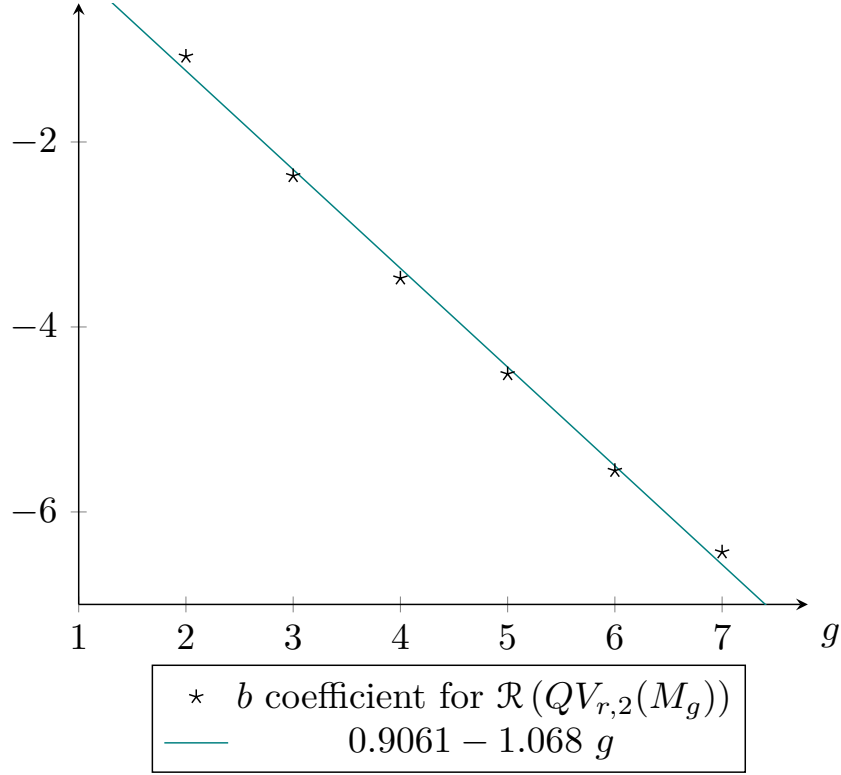

What precedes thus yields a satisfying numerical check of Conjecture \ref{conj:vol:b:aff} for the family of manifolds $M_g$.

More precisely, the interpolating affine function is computed to be 
$$0.9061 -1.068 g.$$
Of course, more data would give us an interpolating affine function closer to the expected one, but as the slope $-1.068$ is already quite close to $-1$, it is not unreasonable to look for a general behavior of $b$ in the form of 
$$ b(M) \overset{?}{=} \text{constant} - \frac{1}{2}\chi(\partial M),$$
since in the specific case of Frigerio's manifolds we have 
$$b(M_g) \underset{2 \leqslant g \leqslant 7}{\approx} \text{constant} - g = \text{constant} - \frac{1}{2} \chi(\partial M_g).$$

\section{Discussion and further directions}

\begin{itemize}
\item It would be interesting to understand the origin of the pattern breaks for $\mathcal{R}\left(QV_{r,2}(M_2)\right )$ and $\mathcal{R}\left(QV_{r,2}(M_3)\right )$. If, as we surmise, they come from numerical approximations by the machine for terms of different magnitudes, then this hypothesis could be tested by refining our code and examining the range of magnitudes of the terms in the sum when $r$ grows larger. The works of Maria-Rouillé \cite{MaRo} seem like a promising direction to follow.
\item Conjecture \ref{conj:vol:b:aff} appears to be satisfied for the manifolds $M_g$, but it would be interesting to test it for other families of manifolds with diverse boundary components. Furthermore, one could try to prove (or disprove!) rigorously that $b$ has an affine behavior of the form $- \frac{1}{2}\chi(\partial M)+constant$, via combinatorial arguments on the triangulations (how the numbers of vertices, edges and tetrahedra are related to $\chi(\partial M)$) and asymptotics in $r$ of the  terms associated to regular vertices, edges and tetrahedra in the definition of the Turaev--Viro invariants.
\item The extended volume conjectures as stated in Conjectures \ref{conj:vol:bc} and \ref{conj:vol:b:aff} already (or may possibly) admit variants for other quantum invariants. Can the methods used in the present paper be applied for these other invariants? The manifolds $M_g$ seem especially convenient to study for invariants defined on (ordered) triangulations.
\end{itemize}

\section*{Acknowledgements}
The first author was supported by the FNRS in his "Research Fellow" position at UCLouvain, under under Grant
no. 1B03320F. 
The second author would like to thank his two supervisors (Pedro Vaz and the first author) for their intellectual and emotional support throughout his Master's thesis that gave rise to the current paper.
Both authors thank Pedro Vaz for his continuous involvement
 in the project, and Renaud Detcherry and François Costantino for helpful discussions.

\end{document}